\newtheorem{theorem}{Theorem}[section]
\newtheorem{lemma}[theorem]{Lemma}
\newtheorem{prop}[theorem]{Proposition}
\newtheorem{cor}[theorem]{Corollary}
\newtheorem{ques}[theorem]{Question}
\newtheorem{defn}[theorem]{Definition}
\newcommand{\Gab}{G  -  a,b}
\newcommand{\R}{\mathbb R}
\newcommand{\TY}{\nabla\mathrm{Y}}
\newcommand{\YT}{\mathrm{Y}\nabla}
\title{Graphs on 21 edges that are not $2$--apex}
\author{Jamison Barsotti}
\address{Department of Mathematics,
University of California Santa Cruz, Santa Cruz, CA 95064}
\email{jbarsott@ucsc.edu}
\author{Thomas W.\ Mattman}
\address{Department of Mathematics and Statistics,
California State University, Chico,
Chico, CA 95929-0525}
\email{TMattman@CSUChico.edu}
\subjclass[2010]{Primary 05C10, Secondary 57M15, 57M25 }
\keywords{spatial graphs, intrinsic knotting, apex graphs, forbidden minors}
\begin{document}

\begin{abstract}
We show that the 20 graph Heawood family, obtained by a combination of $\TY$ and $\YT$ moves on $K_7$, is precisely the set of graphs of at most 21 edges that are minor minimal for the property not $2$--apex. As a corollary, this gives a new proof that the 14 graphs obtained by $\TY$ moves on $K_7$ are the minor minimal intrinsically knotted graphs of 21 or fewer edges. Similarly, we argue that the seven graph Petersen family, obtained from $K_6$, is the set of graphs of at most 17 edges that are minor minimal for the property not apex.
\end{abstract}

\maketitle

\section{Introduction}

A graph is {\bf $n$--apex} if the deletion of $n$ or fewer vertices results in a planar graph. As this property is
closed under taking minors, it follows from Robertson and Seymour's Graph Minor Theorem~\cite{RS} 
that, for each $n$, the $n$--apex graphs are characterized by a finite set of forbidden minors. For example, $0$--apex is equivalent to planarity, which Wagner~\cite{W} showed is characterized by $K_5$ and $K_{3,3}$. For the property $1$--apex,  which we simply call {\bf apex}, there are several hundreds of forbidden graphs (see \cite{DD}, which refers to work of a team led by Kezdy). Since there are likely even more forbidden minors for the $2$--apex property, we divide the problem into more manageable pieces by graph size. In an earlier paper~\cite{Ma}, the second author showed that every graph on 20 or fewer edges is $2$--apex. This means there are no forbidden minors with 20 or fewer edges. In the current paper, we show that there are exactly 20 obstruction graphs for $2$--apex of size at most 21.

Following~\cite{HNTY}, the  {\bf Heawood family} will denote the set of 20 graphs obtained from $K_7$ by a sequence of zero or more $\TY$ or $\YT$ moves.  
Recall that a {\bf $\TY$ move} consists of deleting the edges of a $3$-cycle $abc$ of graph $G$, and adding a new degree three vertex adjacent to the vertices $a$, $b$, and $c$. The reverse, deleting a degree three vertex and making its neighbors adjacent, is a {\bf $\YT$ move}.
The Heawood family is illustrated schematically in Figure~\ref{figHea}  (taken from \cite{GMN}) where $K_7$ is graph 1 at the top of the figure and the $(14,21)$ Heawood graph is graph 18 at the bottom. 

\begin{figure}[htb]
\begin{center}
\includegraphics[scale=0.75]{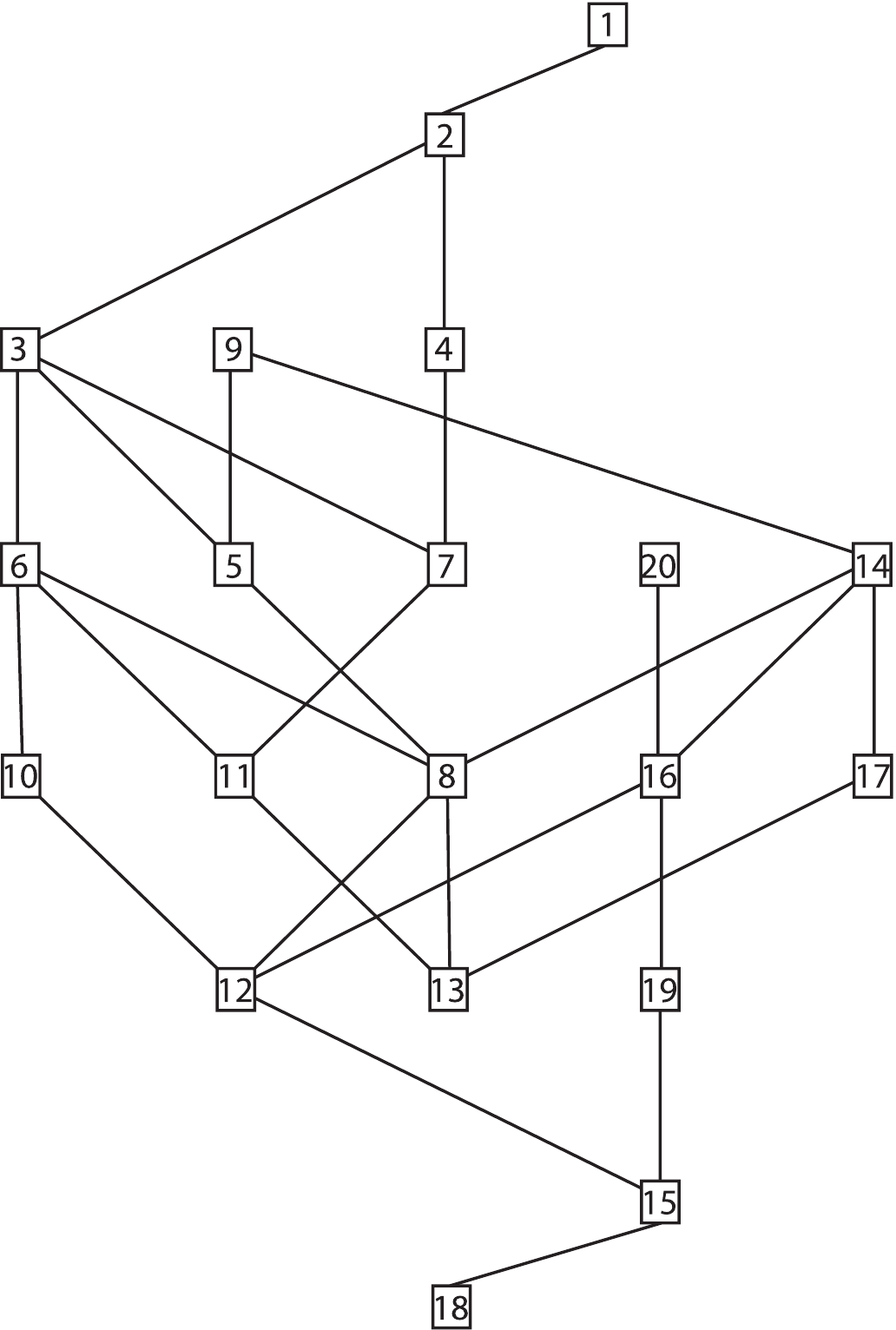}
\caption{The Heawood family (figure taken from \cite{GMN}). Edges represent $\TY$ moves.}\label{figHea}
\end{center}
\end{figure}

Our main theorem is that the Heawood family is precisely the obstruction set for the property $2$--apex among graphs of size at most 21. We will state this in terms of minor minimality. We say $H$ is a {\bf minor} of graph $G$ if $H$ is obtained by contracting edges in a subgraph of $G$. The graph $G$ is {\bf minor minimal} with respect to a graph property $\mathcal P$, if $G$ has $\mathcal P$, but no proper minor of $G$ does. 
We call obstruction graphs for the $2$--apex property {\bf minor minimal not $2$--apex or MMN2A}.

\begin{theorem}  \label{thmain}%
The 20 Heawood family graphs are the only MMN2A graphs on 21 or fewer edges.
\end{theorem}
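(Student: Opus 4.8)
The plan is to prove the two inclusions: every Heawood family graph is MMN2A, and every MMN2A graph on at most $21$ edges is one of those $20$. For the first, each of the $20$ graphs has exactly $21$ edges, so each of its proper minors has at most $20$ edges and is $2$--apex by~\cite{Ma}; it remains only to check that none of the $20$ is $2$--apex. This is immediate for $K_7$, since $K_7$ minus any two vertices is $K_5$. For the other nineteen I would either check it directly --- a finite computation, each graph having at most $14$ vertices --- or propagate non-$2$--apexness from $K_7$ along the $\TY$ and $\YT$ moves, using the lemma that such a move does not change whether a graph is $n$--apex (a degree three vertex lying in a face of a planar graph can be exchanged for the triangle on its three neighbours, and conversely; the cases in which a deleted vertex meets the triangle are covered by the fact that attaching or deleting a vertex of degree at most two across a newly opened face preserves planarity).

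For the converse, let $G$ be MMN2A. By~\cite{Ma} there is no not-$2$--apex graph on $20$ or fewer edges, so $G$ has exactly $21$ edges and every proper minor of $G$ \emph{is} $2$--apex. The standard reductions then apply: a vertex of degree at most $2$ can be deleted or suppressed, and a cut set of size at most $2$ can be used to split $G$, in each case producing a strictly smaller not-$2$--apex graph (these operations interacting in a controlled way with planarity of two-vertex-deleted subgraphs), contradicting the minimality of $G$ or~\cite{Ma}. Hence $G$ is $3$--connected, so $\mind{G}\ge 3$ and $3|V(G)|\le 42$; and since $G$ minus two vertices is non-planar, $G$ has at least five vertices. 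Thus $7\le|V(G)|\le 14$, with $G=K_7$ when $|V(G)|=7$ and $G$ cubic when $|V(G)|=14$.

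For $8\le|V(G)|\le 14$ the main engine is the elementary equivalence that $G$ is not $2$--apex if and only if $G-v$ is not apex for every vertex $v$. So each $G-v$, having $21-\vdeg{v}$ edges, contains a minor-minimal not-apex graph as a minor; when $\vdeg{v}\ge 4$ this minor has at most $17$ edges and is therefore one of the seven Petersen family graphs, by the companion result (also proved here) identifying these as precisely the minor-minimal not-apex graphs of at most $17$ edges. A degree three vertex $v$ is handled separately: its neighbours are pairwise non-adjacent (otherwise a $\YT$ move at $v$ produces a not-$2$--apex graph on fewer than $21$ edges, contradicting~\cite{Ma}), and the $\YT$ move then yields another MMN2A graph on $21$ edges with one fewer vertex, which by induction on $|V(G)|$ is a Heawood graph; since the family is closed under $\TY$ moves, so is $G$. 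We may therefore assume $\mind{G}\ge 4$, which with $21$ edges forces $|V(G)|\le 10$, so only $|V(G)|\in\{8,9,10\}$ remains, and there each $G-v$ carries a Petersen family minor.

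I expect the main obstacles to be two. First, the $\YT$-move reduction above rests on the lemma that $\TY$ and $\YT$ moves preserve the property of not being $2$--apex; establishing this carefully --- in particular the direction needed for $\YT$ moves --- is delicate, and I would prove it by the planar-embedding exchange indicated above, falling back on direct verification for the finitely many relevant graphs if needed. Second, once $\mind{G}\ge 4$ and $|V(G)|\in\{8,9,10\}$, one must show that no such $G$ is MMN2A --- equivalently, that every $21$-edge graph on $8$, $9$, or $10$ vertices with minimum degree at least $4$ is $2$--apex. This is a bounded but intricate case analysis, driven by the requirement that $G-v$ contain a Petersen family minor for every $v$ together with the constraints that $G-e$ and $G/e$ be $2$--apex for every edge $e$; it is also the step where I would be most careful to confirm completeness, that no twenty-first obstruction escapes.
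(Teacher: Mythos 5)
Your overall architecture for the converse---reduce to $\delta(G)\ge 3$, deduce $|V(G)|\le 14$, use the equivalence that $G$ is N2A iff $G-v$ is NA for every $v$, and drive the analysis with the classification of small MMNA graphs (Theorem~\ref{thmMMNA})---matches the paper. The genuine gap is the step by which you dispose of every degree three vertex. You assert that $\TY$ and $\YT$ moves do not change whether a graph is $n$--apex, justified by a ``planar-embedding exchange.'' That exchange is invalid in the $\TY$ direction, because a triangle of a planar graph need not bound a face in any embedding: applying a $\TY$ move to the triangle $123$ of the planar graph $K_5-45$ produces $K_{3,3}$, so $\TY$ does not even preserve $0$--apex; and by gluing Kuratowski pieces along the three edges of a triangle one obtains graphs that are not apex (or not $2$--apex) yet become so after a $\TY$ move. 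The direction your induction actually needs---$G$ N2A implies $\YT_v(G)$ N2A, equivalently that the $\TY$ move back from $G'$ to $G$ cannot create non--$2$--apexness out of nothing---is exactly the paper's Proposition~\ref{prop1}, which is proved only for $|V(G)|\le 8$ or for $|V(G)|\le 10$ with $\|G\|\le 21$, by a delicate planarity argument occupying the entire ten-vertex section; it is nowhere established for $11\le|V(G)|\le 14$, which is precisely where your induction would invoke it. Your fallback of ``direct verification for the finitely many relevant graphs'' works for the forward inclusion (the $20$ known Heawood graphs) but not here, where $G$ is an unknown $21$--edge graph; and the same unproved lemma is hidden in your argument that the neighbours of a degree three vertex are pairwise non-adjacent. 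The paper instead treats orders $14, 13, 12, 11$ by direct structural analysis: $(G-v)^s$ must be NA, hence by Theorem~\ref{thmMMNA} essentially a Petersen family graph, and the ``near a branch vertex'' machinery (Lemmas~\ref{lemapexpath} through~\ref{lemda4}) pins down how $v$ can reattach; the $\YT$ reduction is used only to pass from order ten to the already-classified order nine.

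Two secondary points. Your claim that $G$ is $3$--connected via a two-vertex cut reduction is unjustified and is not in the paper---MMN2A graphs need not even be connected ($K_6\sqcup K_5$ is MMN2A, albeit with $25$ edges)---though you only really use $\delta(G)\ge 3$. Also, the in-paper MMNA classification stops at $16$ edges (the extension to $17$ rests on a cited computer search), which is why the paper always passes to the simplification $(G-v)^s$ to bring the edge count down to at most $16$ before invoking Theorem~\ref{thmMMNA}. Your endgame, that $\delta(G)\ge 4$ forces $|V(G)|\le 10$ and leaves a bounded analysis, is sound and is essentially how the paper concludes the ten-vertex case.
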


As there are no MMN2A graphs of size 20 or less \cite{Ma} and one easily verifies that the Heawood family graphs are MMN2A, the argument comes down to showing no other 21 edge graph enjoys this property. We give a more complete outline of our proof at the end of this introduction.

Our interest in  $2$--apex stems from the close connection with intrinsic knotting.
A graph is {\bf intrinsically knotted or IK} if every tame embedding of the graph in $\R^3$ contains a non-trivially knotted cycle. Then, a  {\bf minor minimal IK or MMIK} graph is one that is IK, but such that no proper minor has this property. Again, Robertson and Seymour's Graph Minor Theorem~\cite{RS} implies 
a finite list of MMIK graphs, but determining this list or even bounding its size has proved very difficult. 
Restricting by order, it follows from Conway and Gordon's seminal paper~\cite{CG} that $K_7$ is the only MMIK graph on seven or fewer vertices; two groups~\cite{CMOPRW} and \cite{BBFFHL} independently determined the MMIK graphs of order eight; and we have announced (see \cite{Mo} and \cite{GMN}) a classification of nine vertex graphs, based on a computer search. In terms of edges, we know (\cite{JKM} and, independently, \cite{Ma}) that a graph of size 20 or less is not IK. Using the following lemma, (due, independently, to two research teams) this follows from the lack of MMN2A graphs of that size.

\begin{lemma} \label{lem2ap}%
\cite{BBFFHL, OT}
If $G$ is IK, then $G$ is not $2$--apex.
\end{lemma}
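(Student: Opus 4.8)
The plan is to prove the contrapositive: every $2$--apex graph $G$ admits a \emph{knotless} embedding (one in which no cycle forms a nontrivial knot), and is therefore not IK. So let $a,b\in V(G)$ with $H:=\Gab$ planar, and fix a planar embedding of $H$ in the plane $\Pi=\{z=0\}\subset\R^3$. Extend it to an embedding of $G$ by placing $a$ at $(0,0,1)$ and $b$ at $(0,0,-1)$, drawing each edge from $a$ to a vertex of $H$ as the straight segment from $a$ (so its interior lies in the open upper half--space $\{z>0\}$), drawing each edge from $b$ to a vertex of $H$ as a straight segment into $\{z<0\}$, and, if $ab\in E(G)$, drawing $ab$ as a $z$--monotone arc running outside a large ball containing $H$. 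This is an embedding of $G$: distinct segments at $a$ meet only at $a$ and have interiors in $\{z>0\}$, similarly at $b$, and the interiors of these segments miss $\Pi\supseteq H$.

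It remains to show every cycle $C$ of $G$ is unknotted, which I would do by cases on $|C\cap\{a,b\}|$. If $C$ misses both $a$ and $b$, then $C\subseteq\Pi$ is unknotted. If $C$ meets just one of them, say $a$, then $C$ is the union of two segments $au,av$ with an arc $Q\subseteq H$ from $u$ to $v$, and the cone on $Q$ with apex $a$ is an embedded disk with boundary $C$ --- the $z$--coordinate of $(1-t)q+ta$ equals $t$, so the cone is embedded --- whence $C$ is unknotted.

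The essential case is when $C$ contains both $a$ and $b$. Then $C$ is a union of two arcs from $a$ to $b$, each consisting of a segment at $a$, an arc in $H$, and a segment at $b$ (one arc possibly being the single edge $ab$). Hence $C\cap\{z>0\}$ has closure an arc $\beta_+$ through $a$ with endpoints $u_1,u_2$; $C\cap\{z<0\}$ has closure an arc $\beta_-$ through $b$ with endpoints $v_1,v_2$; and $C\cap\Pi$ is a disjoint union of two arcs of $H$, from $u_1$ to $v_1$ and from $u_2$ to $v_2$ (one degenerating to a point if $ab$ is used). Now $\beta_+$ is a union of two segments, so the triangle on $u_1,a,u_2$ is an embedded disk lying in $\{z\ge 0\}$ with boundary $\beta_+\cup[u_1u_2]$; pushing $\beta_+$ across this triangle isotopes it --- keeping its interior in $\{z>0\}$, hence off the rest of $C$ --- to an arc just above $\Pi$, and, after sliding its projection within $\Pi$ off the (compact) set $C\cap\Pi$ --- possible since a disjoint union of two arcs does not separate $\Pi$ --- it can be lowered into $\Pi$. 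Applying the same move to $\beta_-$ places all of $C$ in $\Pi$, so $C$ is unknotted.

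The main obstacle is making this straightening rigorous: one must track the isotopies of $\beta_{\pm}$ so they never collide with $C\cap\Pi$, and verify that the needed arcs in the complement of two disjoint planar arcs exist. Once every cycle is seen to be unknotted, the embedding is knotless and $G$ is not IK. (Alternatively, one can assemble an explicit spanning disk for $C$ from a disk in $\Pi$ cobounded by $C\cap\Pi$ together with two auxiliary planar arcs, capped above and below by the triangles on $u_1,a,u_2$ and on $v_1,b,v_2$.)
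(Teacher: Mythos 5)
The paper does not actually prove this lemma; it is quoted from \cite{BBFFHL} and \cite{OT}, so there is no in-paper argument to compare against. What you have written is, in substance, the standard proof from those references: embed the planar graph $\Gab$ in a plane, cone $a$ off to one side and $b$ to the other, and check cycle by cycle that the resulting embedding is knotless. Your case analysis is the right one, and the two easy cases are genuinely easy (a cycle in the plane is unknotted; a cycle through exactly one of $a,b$ bounds the embedded cone disk over its planar arc, since the $z$--coordinate parametrizes the cone lines). The essential case, a cycle $C$ through both $a$ and $b$, is also handled correctly in outline: $C$ meets the plane in two disjoint arcs (one possibly a point when the edge $ab$ is used), and each of $\beta_\pm$ is a boundary-parallel arc in its closed half-space, so $C$ can be isotoped into the plane.

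The gap you flag is real but entirely standard to fill, and it is worth being precise about what is needed. First, the existence of the auxiliary planar arcs is immediate: two disjoint arcs do not separate the plane, so there is an embedded arc $\gamma_1$ from $u_1$ to $u_2$ meeting $C\cap\Pi$ only at its endpoints, and after $\beta_+$ has been flattened onto $\gamma_1$ the set $P_1\cup\gamma_1\cup P_2$ is a single arc from $v_1$ to $v_2$, so $\gamma_2$ exists as well; the final curve is then a simple closed curve in $\Pi$, hence an unknot. Second, the isotopy of $\beta_+$ onto $\gamma_1$ rel endpoints, avoiding the rest of $C$, follows from the uniqueness up to isotopy of the trivial (boundary-parallel) $1$--string tangle in a ball: both $\beta_+$ and the slight upward push-off of $\gamma_1$ are boundary-parallel arcs in $\{z\ge 0\}$ with interiors in $\{z>0\}$, the rest of $C$ meets $\{z\ge0\}$ only in $P_1\cup P_2\subset\Pi$, and the final descent onto $\gamma_1$ lands off $P_1\cup P_2$. (Your parenthetical alternative --- assembling an explicit spanning disk from a planar disk bounded by $P_1\cup\gamma_2\cup P_2\cup\gamma_1$ together with the two cone triangles --- works equally well and avoids quoting tangle uniqueness.) One small point to attend to: when $C$ uses the edge $ab$, the arc $\beta_+$ is a straight segment together with half of the $z$--monotone edge $ab$ rather than two segments, but $z$--monotonicity still makes it boundary-parallel, so the argument is unaffected. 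With those standard facts supplied, your proof is complete and is the same proof as in the cited sources.
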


The current authors~\cite{BM} and, independently, Lee et al.~\cite{LKLO} classified the 21 edge MMIK graphs. These are the 
14 KS graphs obtained by $\TY$ moves on $K_7$, first described by Kohara and Suzuki~\cite{KS}. In other words, these are the Heawood family graphs except those labelled 9, 14, 16, 17, 19, and 20 in Figure~\ref{figHea}. In light of Lemma~\ref{lem2ap}, we have a new proof as a corollary to our main theorem.

\begin{cor} \label{corMMIK} 
The 14 KS graphs are the only MMIK graphs on 21 or fewer edges.
\end{cor}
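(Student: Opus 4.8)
The plan is to derive Corollary~\ref{corMMIK} directly from Theorem~\ref{thmain} and Lemma~\ref{lem2ap}, using two facts about intrinsic knotting already in the literature: Kohara and Suzuki~\cite{KS} showed the 14 KS graphs are IK, and Hanaki et al.~\cite{HNTY} showed the remaining six Heawood family graphs --- numbers 9, 14, 16, 17, 19, and 20 in Figure~\ref{figHea} --- are not IK. I would also record the elementary observation that every graph in the Heawood family has exactly 21 edges, since $K_7$ has 21 edges and neither a $\TY$ nor a $\YT$ move changes the edge count.

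First I would note that the 14 KS graphs are in fact MMIK, not merely IK: each has 21 edges, so any proper minor either has at most 20 edges or is obtained purely by deleting isolated vertices (which does not affect knotting), and is therefore not IK by \cite{JKM} (equivalently \cite{Ma}). It then remains to show no other graph on 21 or fewer edges is MMIK. So let $G$ be MMIK with $|E(G)| \le 21$. By Lemma~\ref{lem2ap}, $G$ is not $2$--apex, and since that property is minor closed, $G$ has an MMN2A minor $H$ with $|E(H)| \le |E(G)| \le 21$. By Theorem~\ref{thmain}, $H$ is one of the 20 Heawood family graphs, so $|E(H)| = 21$; comparing with $|E(H)| \le |E(G)| \le 21$ forces $|E(G)| = 21$ and $H = G$ (again using that $G$, being MMIK, has no isolated vertices). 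Thus $G$ is itself a Heawood family graph, and if it were one of the six non-KS graphs it would not be IK by \cite{HNTY}, contradicting that $G$ is IK. Hence $G$ is one of the 14 KS graphs.

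I do not expect a genuine obstacle here: the substance of the corollary is carried entirely by Theorem~\ref{thmain}, with Lemma~\ref{lem2ap} bridging knotting to $2$--apex and the classical results of \cite{KS} and \cite{HNTY} recording which members of the Heawood family are knotted. The only point that needs a moment's care is the edge-count bookkeeping that pins down $H = G$ and upgrades ``IK'' to ``MMIK'' for the KS graphs; this uses nothing deeper than the fact, from \cite{JKM} or \cite{Ma}, that a graph on at most 20 edges is not IK.
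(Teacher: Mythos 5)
Your proposal is correct and follows essentially the same route as the paper's proof: apply Lemma~\ref{lem2ap} to get an MMN2A minor, identify it as a Heawood family graph via Theorem~\ref{thmain}, use the 21-edge count to conclude the minor is $G$ itself, and then invoke the known classification of which Heawood family members are IK. The only cosmetic differences are that the paper cites Kohara and Suzuki directly for the KS graphs being MMIK (rather than rederiving minimality from the 20-edge bound of \cite{JKM,Ma}) and uses connectivity of $G$ where you use the absence of isolated vertices; both work.
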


\begin{proof}
Kohara and Suzuki~\cite{KS}  showed that the KS graphs are MMIK. 
Suppose $G$ is MMIK of at most 21 edges. Then $G$ is connected. By Lemma~\ref{lem2ap}, $G$ has an MMN2A minor and by Theorem~\ref{thmain} this means a Heawood family graph minor. As $G$ has at most 21 edges and is connected,  $G$ {\em is} a Heawood family graph. Finally, Goldberg et al.~\cite{GMN} and Hanaki, Nikkuni, Taniyama, and Yamazaki~\cite{HNTY}, independently, showed that in the Heawood family only the KS graphs are IK. Therefore, $G$ is a KS graph.
\end{proof}

The proof of our main theorem relies on our classification of MMNA graphs (i.e., obstructions to the $1$--apex, or apex, property) of small order, a result that may be of independent interest.
Recall that, in analogy with the Heawood family, the Petersen family is the seven graphs obtained from the Petersen graph by a sequence of $\TY$ or $\YT$ moves.

\begin{theorem} \label{thmMMNA}
The seven Petersen family graphs are the only MMNA graphs on 16 or fewer edges
\end{theorem}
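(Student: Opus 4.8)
The plan is to show that any graph that is MMNA and has at most $16$ edges must be one of the seven Petersen family graphs; the starting point is to check that those seven really are MMNA. For that, I would use the fact that an apex graph is linklessly embeddable — embed $G-v$ in a plane and place $v$ slightly above it — so every intrinsically linked graph fails to be apex. By the Robertson--Seymour--Thomas theorem the Petersen family is precisely the set of minor-minimal intrinsically linked graphs, so each of its members is intrinsically linked, hence not apex, while every proper minor of a Petersen family graph is linklessly embeddable. Since each of these graphs has at most $10$ vertices there are only finitely many such minors, and a direct check confirms each is in fact apex; so the seven graphs are MMNA.

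\emph{Reductions.} Now let $G$ be MMNA with $|E(G)| \le 16$. First I would observe that $G$ is connected, indeed $2$-connected: a disconnected graph, or one with a cut vertex, has a proper subgraph that is already not apex, contradicting minimality. Similarly $\delta(G) \ge 3$, since a vertex of degree $\le 1$ may be deleted and one of degree $2$ suppressed, each time leaving a smaller not-apex graph. Hence $3|V(G)| \le 2|E(G)| \le 32$, so $|V(G)| \le 10$, while $|V(G)| \ge 6$ because every graph on at most five vertices is apex. For each $v$, the graph $G-v$ is connected and nonplanar, hence has cyclomatic number at least $4$ and therefore at least $|V(G)|+2$ edges when $|V(G)| \ge 7$ (and at least $10$ when $|V(G)|=6$); averaging via $\sum_v |E(G-v)| = (|V(G)|-2)|E(G)|$ then confines $(|V(G)|,|E(G)|)$ to $(6,15)$, to $(7,k)$ with $13 \le k \le 16$, to $(8,k)$ with $14 \le k \le 16$, and to $(9,k)$ and $(10,k)$ with $15 \le k \le 16$. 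When $|V(G)|=6$, $G-v$ must be nonplanar on five vertices, hence equal to $K_5$, for every $v$, which forces $G=K_6$.

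\emph{The remaining cases.} For $7 \le |V(G)| \le 10$ I would split into two cases. If $G$ is intrinsically linked then it has a Petersen family graph $H$ as a minor; $H$ is not apex, and $G$ has no proper not-apex minor, so $H=G$ and we are done. Otherwise $G$ is linklessly embeddable — equivalently, by Robertson--Seymour--Thomas, $G$ has no Petersen family minor — and the task reduces to a finite verification: over all $2$-connected graphs of minimum degree $\ge 3$, with no Petersen family minor and with $(|V|,|E|)$ as above, show that each is either apex or has a proper not-apex minor. I would prune this search using local reductions along $\TY$ and $\YT$ moves, which relate the non-apexness of $G$ to that of its images and let one recognize $\TY/\YT$-orbits of the Petersen family. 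The conclusion should be that nothing new survives, so in particular there is no $16$-edge MMNA graph and the seven Petersen family graphs are the only MMNA graphs on $16$ or fewer edges.

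\emph{The hard part.} The difficulty is concentrated in the linklessly embeddable case. Bounding the search is routine and the intrinsically linked case is immediate, but ruling out the remaining candidates is a genuine and sizeable finite analysis. It has to be made provably exhaustive, and for each candidate one must certify non-apexness correctly — which means exhibiting, for every vertex $v$, a $K_5$ or $K_{3,3}$ minor of $G-v$ — and verify apexness of every proper minor of each of the seven survivors. The $\TY/\YT$ reductions help but do not preserve minor-minimality, so minimality must be rechecked for every graph they produce, and they stall altogether on triangle-free graphs of minimum degree $\ge 4$ (necessarily on at most eight vertices), which must be handled by hand.
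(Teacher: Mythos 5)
Your framework is sound and your easy steps are essentially correct: the seven graphs are NA because apex implies linklessly embeddable and the Petersen family is intrinsically linked; the degree and edge-count reductions confine $(|V(G)|,|E(G)|)$ to the ranges you list (and these match what the paper derives); and the intrinsically linked case is handled cleanly by minimality, which is the same mechanism the paper packages as Lemma~\ref{lemapathequiv}. But the proposal has a genuine gap: the entire substance of the theorem is the claim that no graph in your admissible range other than the seven survives, and you do not prove it --- you explicitly defer it to ``a genuine and sizeable finite analysis'' that is ``to be made provably exhaustive.'' That analysis is not a routine afterthought; it is the proof. The paper carries it out by hand, organized by $\|G\|$ from $12$ to $16$ and within each size by $\Delta(G)$: one repeatedly chooses a vertex $a$ (of maximum degree, or a degree-$4$ vertex with a degree-$3$ neighbor, whose existence is guaranteed by Lemma~\ref{lem34mix}) so that $(G-a)^s$ is forced to be one of a short explicit list of small nonplanar graphs ($K_5$, $K_{3,3}$, $K_{3,3}+e$, the $(6,11)$ and $(7,11)$ graphs), and then Lemmas~\ref{lemapexpath}, \ref{lemda3}, and \ref{lemda4} --- the ``nearness to branch vertices'' criterion --- pin down how $a$ can reattach, in each case either producing a Petersen family minor or a planar $G-v$. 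Your proposal contains no substitute for this machinery, so as written it establishes only the easy inclusion.

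Two smaller points. First, your justification for $2$-connectivity (``a disconnected graph, or one with a cut vertex, has a proper subgraph that is already not apex'') is false in general: $K_{3,3}\sqcup K_{3,3}$ is MMNA and disconnected, and neither component is NA. The conclusion does hold under the $16$-edge hypothesis, but only via the edge count (two nonplanar pieces need at least $18$ edges), which you should state; the paper sidesteps this entirely by working only with $\delta(G)\ge 3$ and never invoking connectivity. Second, your dichotomy ``intrinsically linked versus linklessly embeddable'' is a reasonable reorganization and the IL branch is a genuine shortcut, but note that in the paper's argument most cases are resolved precisely by showing the NA hypothesis forces a Petersen family minor (Lemma~\ref{lemapathequiv} makes NA, IL, and having a Petersen minor equivalent once $(G-a)^s$ is $K_5$ or $K_{3,3}$), so the two branches are not as separable as your outline suggests; the residual non-IL candidates are eliminated by exhibiting explicit pairs $v$ with $G-v$ planar, case by case.
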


Famously, the Petersen family is precisely the obstruction set to intrinsic linking~\cite{RST}. It would be nice to have a similar description of
the Heawood family. Theorem~\ref{thmain} is one such characterization. As a second corollary to our main theorem, we give a characterization of similar flavor. Hanaki, Nikkuni, Taniyama, and Yamazaki~\cite{HNTY} showed that the Heawood family graphs are 
minor minimal for intrinsically knotted or completely 3-linked or MMI(K or C3L). 

\begin{cor} \label{corHNTY}
The 20 Heawood family graphs are the only MMI(K or C3L) graphs on 21 or fewer edges.
\end{cor}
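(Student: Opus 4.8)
The plan is to follow the template of the proof of Corollary~\ref{corMMIK}. By Hanaki, Nikkuni, Taniyama, and Yamazaki~\cite{HNTY}, each of the 20 Heawood family graphs is MMI(K or C3L), and each has exactly 21 edges (a $\TY$ or $\YT$ move changes neither the number of edges nor this property), so this direction of the statement is immediate. It remains to show that an MMI(K or C3L) graph $G$ with at most 21 edges must be one of these 20. First I would reduce to the case $G$ connected: if $G = G_1 \sqcup G_2$ with each $G_i$ nonempty, embed $G_1$ and $G_2$ in disjoint balls; a knotted cycle lies in a single $G_i$, and a completely 3-linked link cannot have components in both balls (components in different balls have linking number $0$), so it too lies in a single $G_i$. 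Hence if neither $G_i$ were I(K or C3L) we could assemble a spatial embedding of $G$ with no knot and no completely 3-linked link, a contradiction; so some $G_i$ --- a proper minor of $G$ --- is I(K or C3L), contradicting minimality.

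The crux is that $G$, being I(K or C3L), is not $2$--apex. This extends the proof that IK graphs are not $2$--apex~\cite{BBFFHL, OT} (Lemma~\ref{lem2ap}). If $G$ were $2$--apex with apex vertices $a, b$, realize $\Gab$ in a plane $P \subset \R^3$, draw the edges at $a$ in the open upper half-space, the edges at $b$ in the open lower half-space, and $ab$ (if present) around the outside. As in~\cite{BBFFHL, OT}, no cycle of this spatial graph is knotted. Now take any three pairwise disjoint cycles. Since only two vertices, $a$ and $b$, are distinguished, at least one of the three cycles, say $C_1$, uses neither, hence lies in $P$ and bounds a disk $D_1 \subset P$. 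If a second cycle $C_2$ also avoids $a$ and $b$ it lies in $P$ as well, so $\mathrm{lk}(C_1, C_2) = 0$; otherwise $C_2$ lies in the closed upper (or lower) half-space, and after pushing its arcs in $P$ slightly off $P$ we get $C_2 \cap D_1 = \emptyset$, so again $\mathrm{lk}(C_1, C_2) = 0$. Thus some pair among the three cycles has zero linking number, so the embedding contains no completely 3-linked link, and $G$ is not I(K or C3L). (If ``completely 3-linked'' is defined by a Milnor invariant rather than by pairwise linking numbers, one argues instead that $C_1$ may be chosen so that $D_1$ misses the other two cycles, so that $C_1$ splits off.)

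Granting this, the proof finishes exactly as in Corollary~\ref{corMMIK}: $G$ is connected, has at most 21 edges, and is not $2$--apex, so it has an MMN2A minor, which by Theorem~\ref{thmain} is one of the 20 Heawood family graphs --- a graph on 21 edges. But a proper minor of a connected graph on $e$ edges has fewer than $e$ edges, since deleting an edge, deleting a vertex of positive degree, or contracting an edge each strictly decreases the edge count, while a connected graph with an edge has no isolated vertex to delete. Hence this Heawood family graph equals $G$, and we are done.

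I expect the main obstacle to be confirming the implication ``I(K or C3L) $\Rightarrow$ not $2$--apex'' for the precise notion of ``completely 3-linked'' used in~\cite{HNTY}: the geometry above is routine, but one must verify that the $a$--above/$b$--below embedding genuinely avoids whatever $3$--component configuration that definition prohibits. The connectivity reduction and the edge-count squeeze are then immediate consequences of Theorem~\ref{thmain}.
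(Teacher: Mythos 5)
Your proposal is correct and follows the same route as the paper: Heawood graphs are MMI(K or C3L) by \cite{HNTY}, the implication I(K or C3L) $\Rightarrow$ N2A reduces the problem to Theorem~\ref{thmain}, and connectivity plus the edge count forces $G$ to equal its Heawood minor. The only difference is that where the paper simply cites \cite[Remark 4.5]{HNTY} for that implication, you reprove it geometrically (and your split-sublink argument does match the definition in \cite{HNTY}, where ``completely 3-linked'' means every $2$-component sublink is nonsplittable, so the verification you flag as a possible obstacle goes through).
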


\begin{proof}
Hanaki et al.\ \cite{HNTY} proved these graphs are MMI(K or C3L).
Let $G$ be MMI(K or C3L) on 21 or fewer edges. Then $G$ is connected.
By \cite[Remark 4.5] {HNTY},  I(K or C3L) implies N2A, so $G$ must have a MMN2A minor. By Theorem~\ref{thmain}, this means a Heawood minor.
It follows that $G$ has 21 edges and is a Heawood family graph, as required.
\end{proof}

This gives two characterizations of the Heawood family. However, like our Theorem~\ref{thmMMNA}, they are less than ideal due to the hypothesis on graph size. Is there a ``natural'' description of the Heawood family analogous to the way the 
Petersen family is precisely the obstruction set for intrinsic linking?

Note that the condition on graph size in these three results is necessary. Indeed, for 
Theorem~\ref{thmMMNA}, the disjoint union $K_{3,3} \sqcup K_{3,3}$ is an 18 edge MMNA graph outside the Petersen family. On the other hand, a computer search~\cite{P}
shows that Theorem~\ref{thmMMNA} could be extended to 17 edges: there are no
MMNA graphs of size 17. Since IK implies 
both N2A (Lemma~\ref{lem2ap}) and I(K or C3L) (see ~\cite{HNTY}) there are many examples of MMN2A and MMI(K or C3L) graphs on 22 edges, including $K_{3,3,1,1}$.
 Foisy~\cite{F} showed this graph is MMIK, which means it is also N2A and I(K or C3L). 
As any proper minor of $K_{3,3,1,1}$ would have at most 21 edges, and no Heawood family graph is a minor, it follows from Theorem~\ref{thmain} and Corollary~\ref{corHNTY}, that $K_{3,3,1,1}$ is both MMN2A and MMI(K or C3L).
So, the hypothesis on size is necessary for both the theorem and its corollary.
 
Thus, $K_{3,3,1,1}$ and the 14 KS graphs are examples of graphs that enjoy all three properties: MMN2A, MMIK, and MMI(K or C3L). On the other hand, the remaining six 
Heawood graphs show that a graph can be MMN2A and not MMIK. This includes the graph that we have called $E_9$  \cite{Ma} and that Hanaki et al.~\cite{HNTY} label $N_9$. In \cite{GMN} we showed that adding an edge to this graph makes it MMIK. In other words, $E_9+e$ is MMIK and not MMN2A (as it has the N2A graph $E_9$ as a subgraph). On the other hand, since IK implies I(K or C3L), every MMIK graph has a minor that is MMI(K or C3L) although $E_9$, for example, shows that 
the set of I(K or C3L) graphs is a strictly larger class than IK.
Similarly,  
 I(K or C3L) implies N2A \cite{HNTY}, which means every MMI(K or C3L) has a MMN2A minor, while the  disjoint union of three $K_{3,3}$'s is an example of a graph
that is N2A but not I(K or C3L).

All six of the Heawood graphs that are not MMIK are MMI(K or C3L) and we can ask if a graph that is MMN2A and not MMIK need be I(K or C3L).  However, 
the disjoint union $G = K_6 \sqcup K_5$ is a counterexample. Since $K_6$ is MMNA and $K_5$ is non-planar, $G$ is N2A and, since any proper minor is $2$--apex, 
it is in fact MMN2A. On the other hand, $G$ is neither IK nor I(K or C3L) as each component has fewer than 21 edges.

We conclude this overview of connections between apex graphs and intrinsic knotting with a question. In ~\cite{GMN} we describe the known 263 examples of MMIK graphs. By Lemma~\ref{lem2ap}, none of these graphs are $2$--apex. However, it is straightforward to verify that each is $3$--apex. Does this hold more generally?

\begin{ques} Is every MMIK graph $3$--apex? \end{ques}

The remainder of our paper is a proof of Theorem~\ref{thmain}. Let $G$ be a MMN2A graph of size 21. We must show $G$ is a Heawood family graph.
We can assume $\delta(G)$, the {\bf minimum degree}, is at least three. Indeed, in a N2A graph, deleting a degree zero vertex or
contracting an edge of a vertex of degree one or two will result in a N2A minor. 
We can also bound the number of vertices.
As $G$ has 21 edges and minimum degree at least three it has at most 14 vertices. 
On the other hand, we classified MMN2A graphs on nine or
fewer vertices in \cite{Ma}. So we can assume $10 \leq |V(G)| \leq 14$.
After introducing some preliminary lemmas, and proving Theorem~\ref{thmMMNA},  in the next section, we devote one section each to the five cases where the number of vertices runs from 14 down to ten. We opted for this reverse ordering as 
it roughly corresponds to increasing length of the proofs.

\section{Preliminaries}

We denote the order of a graph $G$ by $|G|$ and its size by $\|G\|$ and frequently use the pair $(|G|, \|G\|)$ as a way of describing the graph.
For $a,b \in V(G)$, we will use $G-a$ and $G-a,b$ to denote the induced subgraphs on $V(G) \setminus \{a\}$ and $V(G) \setminus \{a,b \}$, respectively.
We will also write $G+a$ to denote a graph with vertices $V(G) \cup \{a\}$ that includes $G$ as the induced subgraph on $V(G)$. In case $V(G)$ and $\{a\}$ are included in the vertex set of some larger graph, $G+a$ will mean the induced subgraph on $V(G) \cup \{a\}$.
We use $N(a)$ to denote the \textbf{neighborhood of vertex} $a$, 
the set of vertices adjacent to $a$.
We will write NA, MMNA, N2A, and MMN2A for ``not apex",
(equivalently, ``not $1$--apex")  ``minor minimal not apex", ``not $2$--apex", and ``minor minimal not $2$--apex" respectively. 

Vertices of degree less than three do not participate in determining whether or not a graph is $n$--apex, so we next describe a systematic way of deleting those vertices. 

\begin{defn} The {\bf simplification} $G^s$ of a graph $G$ is the graph obtained by the following procedure.
\begin{enumerate}
\item Delete all degree 0 vertices
\item Delete all degree 1 vertices and their edges
\item If there remain vertices of degree 0 or 1, go to step (1)
\item For each degree 2 vertex $v$, delete it and its two edges $va$ and $vb$. If $ab$ is not already an edge of the graph, add $ab$.
\item If there remain any vertices of degree 0 or 1, go to step (1)
\end{enumerate}
The procedure allows us to recognize $V(G^s)$ as a subset of $V(G)$. We call these vertices  of $G$ the {\bf  branch vertices}.
\end{defn}

Note that $G^s$ is a minor of $G$ and is unique, up to isomorphism \cite{P}. 

\begin{lemma} The graph $G$ is $n$--apex if and only if $G^s$ is.
\end{lemma}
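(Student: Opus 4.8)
The plan is to reduce the claim to the effect of a single step of the simplification procedure, since $G^s$ is obtained from $G$ by finitely many such steps and each step produces a graph that is a minor of the previous one. So it suffices to show that each of the operations (deleting a degree $0$ vertex, deleting a degree $1$ vertex and its edge, and ``smoothing'' a degree $2$ vertex) preserves the property of being $n$--apex in both directions; transitivity then gives the result for $G^s$.

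First I would handle the ``only if'' direction, which is the easy half: each operation in the simplification produces a minor of $G$, and since $n$--apex is minor-closed, if $G$ is $n$--apex then so is every graph arising along the way, in particular $G^s$. For the ``if'' direction I would argue one operation at a time. Write $H$ for the graph obtained from $G$ by one simplification step and assume $H$ is $n$--apex, so there is a set $A \subseteq V(H)$ with $|A| \le n$ and $H - A$ planar. In the degree $0$ case, $G = H \cup \{v\}$ with $v$ isolated, and $G - A = (H-A) \cup \{v\}$ is planar, so $G$ is $n$--apex. In the degree $1$ case, $G$ is $H$ with a pendant vertex $v$ attached at some vertex $u$; then $G - A$ is either $H-A$ (if $u \in A$, after also discarding the now-isolated $v$, or rather $v$ stays but is isolated) or $(H-A)$ with a pendant added at $u \notin A$, and in either case planarity is preserved since adding a pendant vertex to a planar graph keeps it planar. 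The degree $2$ case is the only one with any content: here $H$ is obtained from $G$ by deleting a degree $2$ vertex $v$ with neighbors $a,b$ and adding the edge $ab$ if absent. Given $A$ with $H-A$ planar and $|A|\le n$, I use the \emph{same} set $A$ for $G$: if $a \in A$ or $b \in A$ then $v$ has degree at most $1$ in $G-A$ and $G - A$ is (a pendant or isolated vertex added to) $H - A$ up to the edge $ab$ not being present, hence planar; if $a,b \notin A$, then $G-A$ is obtained from $H-A$ by subdividing the edge $ab$ (or, if $ab$ was already present in $G$, by subdividing one copy while keeping the other — but $G$ is a graph without multi-edges, so in that subcase $ab \notin E(G)$ and $H-A$ has the edge $ab$ which we subdivide), and subdivision preserves planarity.

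The main obstacle, such as it is, is bookkeeping in the degree $2$ smoothing: one must be careful about whether the edge $ab$ was already present in $G$, whether $a$ or $b$ lies in the deletion set $A$, and the fact that we are working with simple graphs so that ``adding $ab$'' and ``subdividing $ab$'' interact cleanly. The cleanest formulation is to observe that in all subcases $G - A$ and $H - A$ are related by a sequence of the operations ``add/remove an isolated vertex,'' ``add/remove a pendant vertex,'' and ``subdivide/suppress an edge,'' each of which preserves planarity in both directions, and that the deletion set $A \subseteq V(H) \cap V(G)$ can be taken to be literally the same for $G$ and for $H$. Finally, since each simplification step only decreases $|A|$-free planarity obstructions monotonically, iterating gives ``$G$ is $n$--apex $\iff G^s$ is $n$--apex,'' completing the proof.
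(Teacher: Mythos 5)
Your proof is correct and takes the same route as the paper, which simply asserts that ``$n$--apex is preserved by each step in the definition'' and omits all the casework you supply. One small repair: in the degree~$2$ smoothing with $a,b\notin A$, the subcase $ab\in E(G)$ (so that $avb$ is a triangle in $G$ and $H=G-v$) genuinely can occur --- your parenthetical dismissing it is backwards --- but it is handled the same way, since adding a degree~$2$ vertex adjacent to two already-adjacent vertices of a planar graph (a path parallel to the edge $ab$) clearly preserves planarity.
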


\begin{proof} This follows as $n$--apex is preserved by each step in the definition.
\end{proof}

This means that graphs where $G^s$ is non-planar will be of particular interest. An important class of graphs with $G^s = K_{3,3}$ are the
{\bf split $K_{3,3}$'s}: graphs obtained from
$K_{3,3}$ by a finite (possibly empty) sequence of vertex splits.

In this section, we will prove Theorem~\ref{thmMMNA}, the Petersen family graphs are the MMNA graphs with $\|G \| \leq 16$. Recall that the Petersen family is the set of seven graphs obtained by $\TY$ and $\YT$ moves on the $(10,15)$ Petersen graph $P_{10}$. In addition to $P_{10}$, the set includes $K_6$, $K_{3,3,1}$, $K_{4,4}-e$, and, by definition, is closed under $\TY$ and $\YT$ moves. 
We first observe that each graph in the family is MMNA.

\begin{lemma} \label{lemPMMNA}
The seven graphs in the Petersen family are all MMNA
\end{lemma}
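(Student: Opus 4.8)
The plan is to verify the two defining properties of MMNA — that $G$ is not apex, and that every proper minor of $G$ is apex — directly for each of the seven graphs, using their large automorphism groups to keep the casework short. Throughout, one uses that every member of the family has exactly $15$ edges and minimum degree at least $3$. I would not try to transport the property along $\TY$ and $\YT$ moves from a single graph of the family: a $\TY$ move can turn a non-apex graph into an apex one (take a suitable planar graph, delete the edges of a triangle $abc$, then add them back to form a non-apex $G$; then $\TY(G)$ recovers the planar graph once the new vertex is removed), so there is no shortcut of that kind, and a direct check is cleanest.

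For ``not apex'', I would show that for each graph $G$ in the family and every vertex $v$, the graph $G-v$ is non-planar. By vertex-transitivity this is a single check for $P_{10}$ and for $K_6$ (where $K_6-v = K_5$), two checks each for $K_{3,3,1}$ and $K_{4,4}-e$ (in each case $G-v$ still contains a $K_{3,3}$ subgraph), and a similarly short list for the three remaining graphs; in each case one exhibits an explicit $K_5$ or $K_{3,3}$ minor. Equivalently, in one line: by \cite{RST} each graph in the family is a forbidden minor for linkless embeddability and so is not linklessly embeddable, while every apex graph is linklessly embeddable; hence none is apex.

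For ``every proper minor apex'' the key reduction is that, since $\delta(G) \ge 3$, no proper minor of $G$ is obtained by deleting an isolated vertex, so every proper minor of $G$ is a minor of $G - e$ or of $G/e$ for some edge $e$. As the apex property is closed under taking minors, it therefore suffices to check that $G-e$ and $G/e$ are apex for every edge $e$. Symmetry again does most of the work: $K_6$ and $P_{10}$ are edge-transitive, $K_{3,3,1}$ and $K_{4,4}-e$ have two edge orbits each, and for the three less symmetric graphs one lists the orbits of $E(G)$ under $\mathrm{Aut}(G)$. For each orbit representative $e$ I would exhibit a vertex $v$ and a planar drawing of $(G-e)-v$, and similarly for $G/e$ — which for several members is a small, visibly apex graph (for instance $K_6/e = K_5$). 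This is the bulk of the argument, although each individual step is a routine planarity check.

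There is also a shortcut that removes the minor-minimality casework entirely: a proper minor of a $15$-edge graph has at most $14$ edges, so if one may invoke that every graph on at most $14$ edges is apex, minor-minimality is immediate. I would use that if the bound is available, and otherwise fall back on the orbit-by-orbit verification. The main obstacle is therefore bookkeeping rather than ideas: one must correctly identify the three less symmetric members of the family and their automorphism groups so that no edge orbit, and hence no minor reduction, is overlooked.
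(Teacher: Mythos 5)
Your proposal is correct and takes essentially the same approach as the paper: the paper's proof likewise reduces MMNA to checking that $G-v$ is non-planar for every vertex $v$ and that $G-e$ and $G/e$ are apex for every edge $e$, and (like you) omits the routine case-by-case verification. Your additional remarks on exploiting edge orbits, and the alternative one-line non-apex argument via linkless embeddability, are consistent refinements rather than a different route.
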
 

\begin{proof}
Aside from describing what is to be checked, we omit most of the details.
Let $G$ be a graph in the Petersen family. It's enough to verify that $\forall v \in V(G)$,
$G-v$ is non-planar and that $\forall e \in E(G)$, deletion and contraction of $e$ both result in apex graphs.
\end{proof}

The proof of Theorem~\ref{thmMMNA} depends on the following lemma that  characterises NA graphs using the idea of a vertex near  a branch vertex.
If $G$ is a graph
and $w \in V(G)$ is such that there is a path from $w$ to  a branch vertex, $a$, of $G$ that contains 
no other  branch vertices of $G$, then we say $w$ is {\bf near} $a$.
Similarly, if $w$ is a vertex in some $G+v$, $w$ is near  a branch vertex $a$ of $G$ 
if there is a $w$-$a$ path independent of the other  branch vertices.

\begin{lemma} \label{lemapexpath}
Suppose $G$ simplifies to $K_5$ or $K_{3,3}$. Then $G+v$ is NA if and only if $v$ is near every  branch vertex of $G$.
\end{lemma}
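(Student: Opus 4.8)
The plan is to reformulate the ``near'' condition and then handle the two implications separately. Fix a subdivision $\Sigma\subseteq G$ of $K_5$ (resp.\ $K_{3,3}$) whose branch vertices are the branch vertices of $G$, and write $P_e$ for the path of $\Sigma$ that subdivides an edge $e$ of $K_5$ (resp.\ $K_{3,3}$). Let $C$ be the connected component of $v$ in the graph obtained from $G+v$ by deleting all branch vertices of $G$. Since $G$ with all its branch vertices removed is a forest, $C$ is planar. The key observation is that $v$ is near a branch vertex $a$ exactly when some vertex of $C$ (possibly $v$ itself) is joined to $a$: a $v$--$a$ path that avoids the other branch vertices has its next-to-last vertex in $C$. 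Thus the assertion to be proved reads: $G+v$ is NA if and only if $C$ is joined by an edge to every branch vertex of $G$.

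For the forward implication I argue the contrapositive. Suppose no edge joins $C$ to some branch vertex $a_0$; I produce a vertex $w$ with $(G+v)-w$ planar. If $C$ avoids the interiors of all the paths $P_e$, then $C$ attaches to $G$ only along the set $S$ of branch vertices it touches, and $a_0\notin S$. Delete a branch vertex $w$: in the $K_{3,3}$ case take $w$ on the side of the bipartition not containing $a_0$; in the $K_5$ case take $w=a_0$, or, if $\lvert S\rvert=4$, any $w\in S$. Then $G-w$ simplifies to $K_{2,3}$ or $K_4$, whose faces are respectively ``one part together with two vertices of the other part'' and ``any three of the four vertices''; the choice of $w$ then forces $S\setminus\{w\}$ onto a common face of a planar embedding of $G-w$, and the planar piece $C$ can be drawn inside that face, yielding a planar embedding of $(G+v)-w$. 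If instead $C$ meets the interior of some $P_e$, with $e=cd$, I delete a vertex $w\in C$ in that interior; now $G-w$ simplifies to $K_5-e$ or $K_{3,3}-e$, and a similar embedding argument (the surviving $v$-piece again touching only a proper subset of the branch vertices) shows $(G+v)-w$ is planar. In every case $G+v$ is apex.

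For the backward implication, assume $C$ is joined to every branch vertex; I show $(G+v)-w$ has a $K_5$- or $K_{3,3}$-minor for every $w$. If $w=v$ or $w\notin V(\Sigma)$ then $\Sigma$ survives. If $w$ is a branch vertex, then $\Sigma-w$ is a subdivision of $K_4$ or $K_{2,3}$, and since every near-path avoids all branch vertices except its endpoint, $v$ is still near every remaining branch vertex; so it suffices to prove the base statement: if $H^s\in\{K_4,K_{2,3}\}$ and $v$ is near every branch vertex of $H$, then $H+v$ is non-planar. If $w$ lies in the interior of some $P_e$, with $e=cd$, then $\Sigma-w$ is a subdivision of $K_5-e$ or $K_{3,3}-e$, $v$ is still near every branch vertex other than possibly $c$ and $d$, and the path $P_e$ (broken at $w$) together with the near-paths to $c$ and $d$ is to be recombined into a fresh $c$--$d$ path through $v$ that restores the missing edge $e$. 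The base statements, and their ``$-e$'' variants, follow from a dichotomy on $C$: if $C$ can be contracted to a single vertex without absorbing an interior vertex of any $P_e$, that vertex becomes adjacent to every branch vertex and we get a $K_5$- or $K_{3,3}$-minor at once; if $C$ does absorb an interior vertex $x$ of some $P_e$, then instead of contracting we exhibit a $K_{3,3}$ subgraph having $x$ and $v$ among its six branch vertices.

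The main obstacle is this last dichotomy together with the recombination step: one has to keep several near-paths internally disjoint after a vertex has been deleted, and to verify, case by case for $K_4$, $K_{2,3}$, $K_5-e$ and $K_{3,3}-e$, that an absorbed subdivision vertex still forces a $K_{3,3}$. A subsidiary point is that in the forward direction one must know the planar piece $C$ genuinely embeds into the chosen face --- equivalently, that $C$ together with its at most four attachment vertices is planar with those vertices on one face --- and here it is essential that $G^s$ is $K_5$ or $K_{3,3}$: this rigidity prevents $C$ from containing a three-fold branching that attaches to a single branch vertex, which is the only way the embedding could fail.
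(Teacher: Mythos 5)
Your overall strategy is genuinely different from the paper's, most visibly in the backward direction: the paper passes to $(G+v)^s=H+v$, notes that every surviving non-branch vertex is a degree-three vertex adjacent to $v$, performs $\YT$ moves at such vertices to make $v$ adjacent to every branch vertex, and concludes with a $K_6$ or $K_{3,3,1}$ (hence Petersen family) minor via Lemma~\ref{lemPMMNA}. That argument is short and complete. Your version instead tries to exhibit a Kuratowski minor in $(G+v)-w$ for each $w$ separately, and everything difficult is funneled into the final ``dichotomy'' and ``recombination'' steps, which you describe but do not carry out. Keeping the near-paths internally disjoint after a deletion, and checking that an absorbed subdivision vertex still yields a $K_{3,3}$, case by case over $K_4$, $K_{2,3}$, $K_5-e$ and $K_{3,3}-e$, is not a loose end --- it is the entire content of the implication, and as written the backward direction is a plan rather than a proof. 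The $\YT$ trick is precisely what lets the paper avoid this case analysis, and I would encourage you to adopt it.

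There is also a concrete false step in the forward direction: ``$G$ with all its branch vertices removed is a forest'' does not follow from $G^s=K_{3,3}$ or $K_5$. Whole non-tree pieces can simplify away (a triangle hanging at a subdivision vertex disappears; even an entire $K_{2,3}$ component vanishes under steps (1)--(5)). Indeed your component $C$ need not be planar, and the lemma as literally stated fails for $G=K_{3,3}\sqcup K_{2,3}$ with $v$ joined to the three degree-two vertices of the $K_{2,3}$: then $G^s=K_{3,3}$, $G+v=K_{3,3}\sqcup K_{3,3}$ is NA, yet $v$ is near no branch vertex. The paper's proof tacitly assumes $G$ is a subdivision of $K_5$ or $K_{3,3}$ with trees attached (true in every application of the lemma); your ``forest'' sentence is exactly where that structural hypothesis has to be made explicit and justified, and it is not. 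Relatedly, your diagnosis of the only way the face-embedding of $C$ can fail (``a three-fold branching that attaches to a single branch vertex'') is not the right obstruction: the danger is $C$ attaching to two branch vertices that are non-adjacent in $G^s$, i.e.\ opposite on the boundary of the chosen face, interleaved with another attachment. What rules this out is that suppressing such a connection during simplification would add a chord to $G^s$, contradicting $G^s=K_{3,3}$ or $K_5$; this is the honest version of the paper's own terse claim that ``the only way $G+v-b$ could be non-planar would be for $v$ to take the place of $b$,'' and it needs to be stated and proved in either treatment.
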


\begin{proof}
As in the definition above, forming $G^s$, the simplification of $G$, determines a set of branch vertices.

First, assume that $G+v$ is NA and $v$ is not near  a branch vertex of $G$, call it $a$. If we remove  a branch vertex near $a$, call it $b$, then, we claim, $G+v-b$ is planar, which contradicts $G+v$ NA. To verify the claim, note that $G^s$ is minor minimal non-planar. The only way that $G+v-b$ could be non-planar would be for $v$ to take the place of $b$ in that graph. This would require independent paths from $v$ to each of the  branch vertices near $b$. As there is no such $v$-$a$ path,
$G+v-b$ is planar.

Now assume that, in $G+v$, $v$ is near every  branch vertex of $G$. Then $G^* = (G+v)^s$
is of the form $H + v$ where $H$ is a subdivision of $G^s$ and, by abuse of notation,
we again refer to the vertices of $H$ of degree three or four as  branch vertices (of $G$). In $G^*$, the neighbors of $v$ are either 
 branch vertices of $G$ or on edges of $G^s$ that were subdivided to form $H$.
In particular,  $v$ is near the same  branch vertices in $H+v$ as it was in $G+v$.
We wish to show that $G^*$ can, through a series of $\YT$ moves, be transformed into an NA graph. If, in $G^*$, $v$ is adjacent to all the  branch vertices of $G$, we are done, since if $G^s=K_5$, then $G^*$ has a $K_6$ minor, and if $G^s=K_{3,3}$ then $G^*$ has $K_{3,3,1}$
as a minor. As $K_6$ and $K_{3,3,1}$ are both NA (see previous lemma), $G+v$ is as well.

Next, choose  a branch vertex from $G$, call it $a$. Suppose $v$ is not adjacent to $a$ in $G^*$. However,  we've assumed $v$ is near every  branch vertex, including $a$. Hence there is a vertex of degree three that has
both $a$ and $v$ as neighbors, call it $w$. Performing a $\YT$ move on $w$ makes $a$ and $v$ neighbors and will not change the nearness of $v$ with
any  branch vertices. Repeating this process for the rest of the  branch vertices results in a graph where $v$ is adjacent to each  branch vertex of $G$.
Again, if $G^s = K_5$, then this series of $\YT$ moves on $G^*$ gives a graph that has a $K_6$ minor. If $G^s  = K_{3,3}$ then a 
series of $\YT$ moves on $G^*$ gives us a graph that has $K_{3,3,1}$ as a minor.
Since $\YT$ and $\TY$ preserve the Petersen family, we conclude that $G+v$ has a 
minor from the Petersen family and is, therefore, NA.
\end{proof}

The proof shows that, not only is $G+v$ NA, it has a Petersen family graph as a minor. On the other hand, if $G+v$ has a Petersen family graph minor, then it is NA by Lemma~\ref{lemPMMNA}. Also, Petersen family graph minors characterize intrinsic linking~\cite{RST}. The following lemma combines these observations.

\begin{lemma} \label{lemapathequiv}
Let $G$ be a graph with vertex $v$ such that $(G-v)^s = K_5$ or $K_{3,3}$. Then the following are equivalent.
\begin{itemize}
\item The vertex $v$ is near every branch vertex of $G-v$.
\item $G$ is NA.
\item $G$ has a Petersen family graph minor.
\item $G$ is intrinsically linked.
\end{itemize}
\end{lemma}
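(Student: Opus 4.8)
The plan is to deduce all four equivalences from a single result, Lemma~\ref{lemapexpath} (together with its proof), and the Robertson--Seymour--Thomas classification of intrinsically linked graphs~\cite{RST}. Write $H = G - v$, so that $G = H+v$ in the sense of our notation, and by hypothesis $H^s = K_5$ or $K_{3,3}$. I would then establish the chain: first bullet $\Leftrightarrow$ second bullet, first bullet $\Rightarrow$ third bullet, third bullet $\Rightarrow$ second bullet, and third bullet $\Leftrightarrow$ fourth bullet. Any one of these four statements then implies all the others.

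The equivalence of the first two bullets is immediate from Lemma~\ref{lemapexpath} with $H$ playing the role of $G$ there: $G = H+v$ is NA if and only if $v$ is near every branch vertex of $H = G-v$. For ``$v$ near every branch vertex $\Rightarrow$ $G$ has a Petersen family minor,'' I would invoke the observation recorded immediately after the proof of Lemma~\ref{lemapexpath}: that proof does not merely conclude $H+v$ is NA, it exhibits, through a sequence of $\YT$ moves on $(H+v)^s$, a graph containing $K_6$ (when $H^s = K_5$) or $K_{3,3,1}$ (when $H^s = K_{3,3}$) as a minor, and since $\TY$ and $\YT$ moves preserve the Petersen family, this produces a Petersen family graph as a minor of $G$. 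Conversely, since each Petersen family graph is NA by Lemma~\ref{lemPMMNA}, and NA is inherited by any graph having an NA minor (being apex is closed under taking minors), a graph $G$ with a Petersen family minor is NA; this is the implication third bullet $\Rightarrow$ second bullet.

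Finally, the equivalence of the third and fourth bullets is precisely the theorem of Robertson, Seymour, and Thomas~\cite{RST} that a graph is intrinsically linked exactly when it contains a member of the Petersen family as a minor; note this half uses nothing about $(G-v)^s$. Combining this with the chain above shows all four conditions are equivalent. There is no genuine obstacle here: the content lives in Lemma~\ref{lemapexpath}, its proof, and \cite{RST}. The one point needing care is that the third bullet must be read off from the \emph{proof} of Lemma~\ref{lemapexpath} rather than from its statement, so I would make sure that proof is written to make the Petersen family minor explicit (as it is), and I would record the elementary fact that NA, as the complement of a minor-closed property, passes from a minor to any graph containing it.
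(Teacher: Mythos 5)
Your proposal is correct and follows essentially the same route as the paper, which states this lemma as a direct combination of Lemma~\ref{lemapexpath} (whose proof exhibits the Petersen family minor), Lemma~\ref{lemPMMNA} together with minor-closedness of apex, and the Robertson--Seymour--Thomas characterization of intrinsic linking. No gaps.
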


\begin{figure}[ht]
\begin{center}
\includegraphics[scale=0.5]{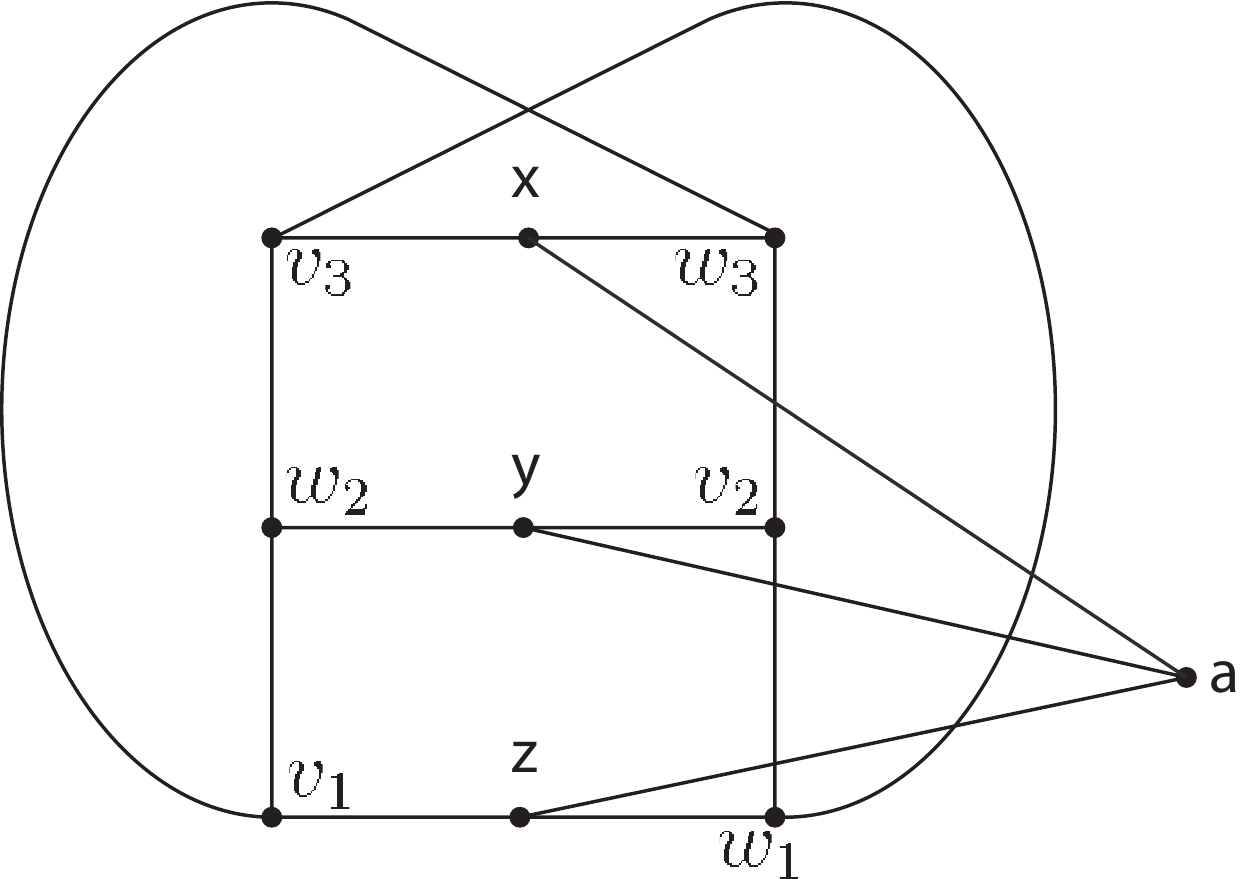}
\caption{Adding a degree $3$ vertex to a split $K_{3,3}$ yields the Petersen graph.}\label{figda3}
\end{center}
\end{figure}

\begin{lemma}
\label{lemda3}%
If $G+a$ is formed by adding a degree three vertex $a$ to a split $K_{3,3}$ graph $G$ and $G+a$ is NA, then $(G+a)^s$ is the Petersen graph.
\end{lemma}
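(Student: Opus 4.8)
The plan is to first pin down the structure of $G$, then use Lemma~\ref{lemapexpath} to locate the three neighbours of $a$, and finally identify $(G+a)^s$ by inspection. Since $G$ is a split $K_{3,3}$, no vertex split can raise the maximum degree, so $\maxd{G}\le 3$; together with $G^s=K_{3,3}$ and connectedness this forces $G$ to be a subdivision of $K_{3,3}$. (If one's notion of split allows trivial splits that create pendant parts, a short reduction slides each neighbour of $a$ along any pendant tree to its attachment point, so we may as well assume $G$ is such a subdivision and that the neighbours of $a$ lie on it.) Write the six branch vertices as $u_1,u_2,u_3,w_1,w_2,w_3$ and let $P_{jk}$ be the path of $G$ that subdivides the edge $u_jw_k$ of $K_{3,3}$.

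By hypothesis $G+a$ is NA, so Lemma~\ref{lemapexpath} gives that $a$ is near every branch vertex of $G$. Let $n_1,n_2,n_3$ be the three (distinct) neighbours of $a$. Any path witnessing ``$a$ is near $b$'' leaves $a$ through one of the $n_i$ and, with $a$ deleted, is a path in $G$ from that $n_i$ to $b$ avoiding the other five branch vertices; so each branch vertex is ``served'' by some $n_i$. The key local fact about a subdivision of $K_{3,3}$ is: if $n_i$ is itself a branch vertex it serves only itself, and if $n_i$ is an interior vertex of $P_{jk}$ then it serves exactly $\{u_j,w_k\}$ (the only ways to exit the interior of $P_{jk}$ are through $u_j$ or through $w_k$). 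Hence each $n_i$ serves at most two branch vertices; since all six must be served by three neighbours, each serves exactly two and the three $2$-element served sets partition the six branch vertices. In particular no $n_i$ is a branch vertex, each $n_i$ lies in the interior of some path $P_{j_i k_i}$, the $j_i$ are distinct and the $k_i$ are distinct. Relabelling the branch vertices, we may assume $n_i$ lies in the interior of $P_{ii}$ for $i=1,2,3$; i.e., the three subdivided edges $u_1w_1,u_2w_2,u_3w_3$ form a perfect matching of $K_{3,3}$.

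Now identify $(G+a)^s$. In $G+a$ the ten vertices $u_1,u_2,u_3,w_1,w_2,w_3,n_1,n_2,n_3,a$ all have degree $3$ and every other vertex has degree $2$; smoothing the degree-$2$ vertices creates no loops or parallel edges, so $(G+a)^s$ is the cubic graph on these ten vertices with $a\sim n_1,n_2,n_3$, each $n_i\sim a,u_i,w_i$, each $u_j\sim n_j$ and $w_k$ for $k\ne j$, and each $w_k\sim n_k$ and $u_j$ for $j\ne k$. This is exactly $K_{3,3}$ with a perfect matching subdivided and a new vertex joined to the three subdivision points, the standard picture of reversing a $\YT$ move on the Petersen graph (Figure~\ref{figda3}). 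One checks directly that this graph is triangle- and quadrilateral-free but has $5$-cycles such as $a,n_1,u_1,w_2,n_2$, so it has girth $5$; being the unique $3$-regular graph on ten vertices of girth $5$, it is the Petersen graph, as claimed.

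The main obstacle is the serving count in the second paragraph: one must get the nearness bookkeeping in the subdivision of $K_{3,3}$ exactly right, namely that an interior vertex of $P_{jk}$ is near $u_j$ and $w_k$ and nothing else, so that three degree-three neighbours of $a$ are forced onto the interiors of a perfect matching's worth of subdivided edges, meeting a distinct $u$ and a distinct $w$ each. After that the computation of $(G+a)^s$ is mechanical, and the final identification with $P_{10}$ is either a short cycle-length check or an appeal to the well-known description pictured in Figure~\ref{figda3}.
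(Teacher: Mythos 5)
Your proof is correct and follows the same route as the paper: apply Lemma~\ref{lemapexpath} to get that $a$ is near every branch vertex, deduce that the three neighbours of $a$ must sit on the interiors of a perfect matching's worth of subdivided edges, and recognize the simplification as the Petersen graph. The paper compresses the middle step to ``up to isomorphism, the only way to arrange this is as in Figure~\ref{figda3}''; your serving-count argument is exactly the justification it leaves to the reader.
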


\begin{proof}
By Lemma~\ref{lemapexpath}, there are paths from $a$ to each  branch vertex that avoid all other  branch vertices. Up to isomorphism, the only way to arrange this is as in the graph of Figure~\ref{figda3}, which is the Petersen graph.
\end{proof}

Figure~\ref{figda3} illustrates the idea of a vertex being near an edge. Let $G$ be such that $G^s = K_{3,3}$ or $K_5$. As in the proof of Lemma~\ref{lemapexpath}, 
if we add a vertex $v$, then, in general, $(G+v)^s$ will be of the form $H + v$ where $H$ is a subdivision of $G^s$. We say that $v$ is {\bf near the edge} $xy$ in $G^s$, where $x$ and $y$ are  branch vertices, if, in $(G+v)^s$, $v$ has a neighbor interior
to the (subdivided) edge $xy$ of $G^s$. In Figure~\ref{figda3}, $a$ is near the edges
$v_iw_i$, $i = 1,2,3$.

\begin{figure}[ht]
\begin{center}
\includegraphics[scale=.5]{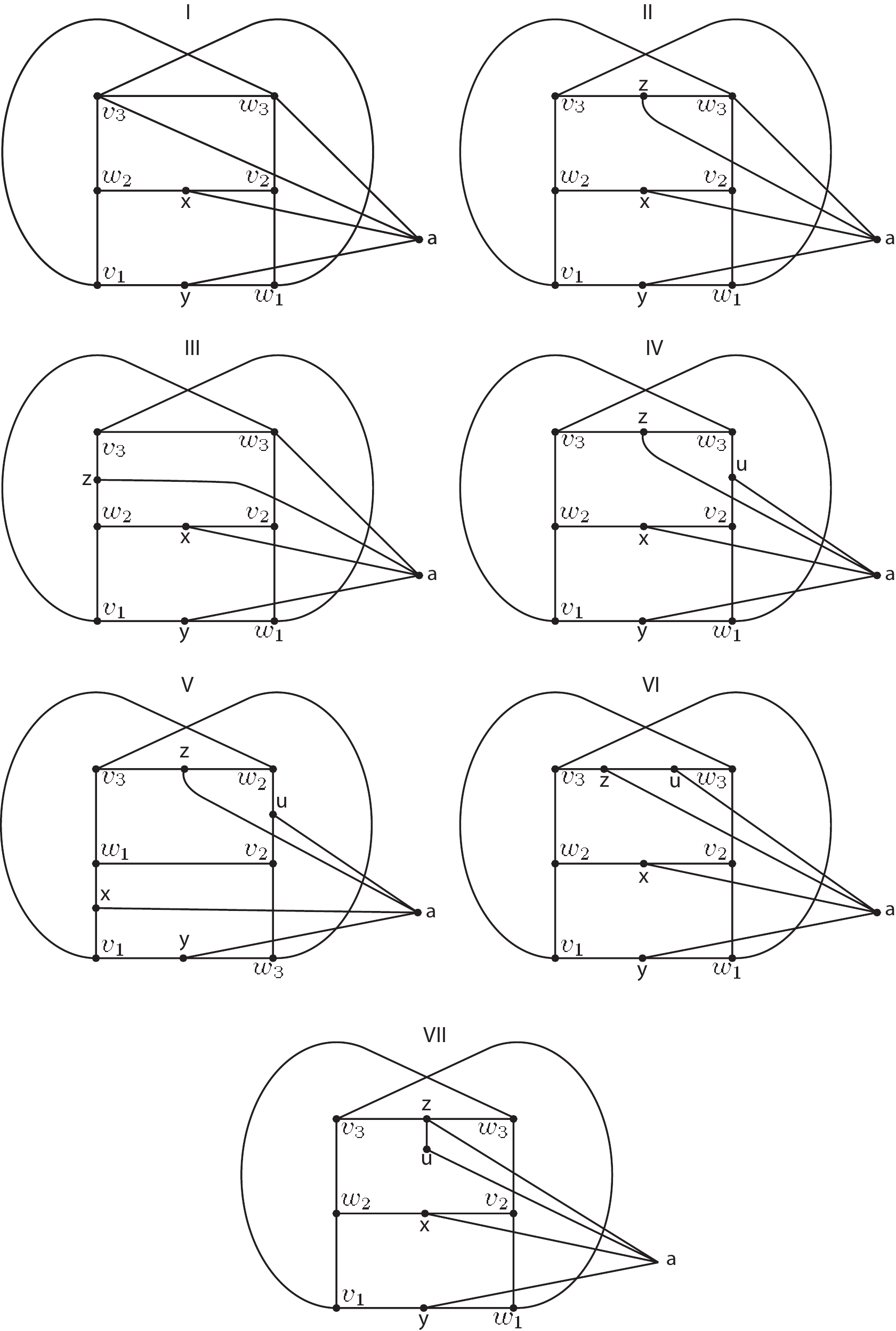}
\caption{Adding a degree $4$ vertex to a split $K_{3,3}$.}\label{figda4}
\end{center}
\end{figure}

\begin{lemma}
\label{lemda4}%
If $G+a$ is formed by adding a vertex $a$ of degree four to a split $K_{3,3}$ graph $G$ and $G+a$ is NA, then $(G+a)^s$ is one of the seven graphs in Figure~\ref{figda4}.
\end{lemma}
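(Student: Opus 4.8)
The plan is to follow the template of the proof of Lemma~\ref{lemda3}, replacing the single Petersen configuration by a short case analysis. Since $G$ is a split $K_{3,3}$, we have $G^s = K_{3,3}$, which is minor minimal non-planar, so by Lemma~\ref{lemapexpath}, $G+a$ is NA if and only if $a$ is near every one of the six branch vertices of $G$. As in the proof of Lemma~\ref{lemapexpath} we pass to $(G+a)^s = H+a$, where $H$ is a subdivision of $K_{3,3}$, and the task becomes: determine, up to isomorphism, all such graphs in which $a$ is near all six branch vertices.

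First I would record the basic combinatorial reductions. In $H+a$ the vertex $a$ has degree at most four, since it was obtained by adding a degree four vertex to $G$ and simplification cannot increase its degree. Each neighbour $w$ of $a$ in $H+a$ is either a branch vertex, through which $a$ is near exactly that one branch vertex, or an interior point of a subdivided edge $xy$ of $K_{3,3}$, through which $a$ is near exactly $x$ and $y$; hence if $a$ has $d$ neighbours it is near at most $2d$ branch vertices, forcing $d \ge 3$. Moreover every subdivision vertex of $H$ has degree two in $H$, so it must be adjacent to $a$ (otherwise it would have degree two in $(G+a)^s$, contradicting minimum degree three); thus the subdivided edges of $K_{3,3}$ are exactly those carrying a neighbour of $a$, and the number of times such an edge is subdivided equals the number of neighbours of $a$ on it. Consequently $(G+a)^s$ is completely determined by the following data: a multiset of at most four ``tokens'', each token a vertex or an edge of $K_{3,3}$, whose covered vertices (a vertex token covering itself, an edge token its two endpoints) exhaust $V(K_{3,3})$.

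The heart of the argument is then the enumeration of these token multisets up to the action of the automorphism group $\mathrm{Aut}(K_{3,3})$, of order $72$. When $d = 3$ the three tokens must be edges forming a perfect matching of $K_{3,3}$, and one checks this is exactly the Petersen configuration of Lemma~\ref{lemda3}. When $d = 4$ I would organise the count by the number $p \in \{0,1,2\}$ of vertex tokens, where a counting bound forces $p \le 2$: for $p = 2$ the two vertex tokens lie in opposite parts and the two edge tokens form a perfect matching on the remaining four vertices; for $p = 1$ the three edge tokens must cover the remaining five vertices, which up to isomorphism happens in two ways (the three edges forming a perfect matching, or one part vertex receiving two of them); and for $p = 0$ the four edge tokens form an edge cover of $K_{3,3}$, whose isomorphism types — including the case of a repeated edge token, i.e.\ a subdivided edge carrying two neighbours of $a$ — account for the rest. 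Collecting these configurations and drawing the associated subdivision-plus-apex graphs should reproduce exactly the seven graphs of Figure~\ref{figda4}; that each of them is in fact NA follows again from Lemma~\ref{lemapexpath}.

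The step I expect to be the main obstacle is making the enumeration genuinely exhaustive and then correctly matching it to the figure. One has to be careful about repeated edge tokens, about vertex tokens that coincide with an endpoint of an edge token (creating a triangle through $a$), and about the degenerate possibility that simplification drops $a$'s degree from four to three and returns us to the Petersen graph; keeping the bookkeeping of subdivided versus unsubdivided edges straight under the $\mathrm{Aut}(K_{3,3})$-action, rather than any individual case, is where the care is needed.
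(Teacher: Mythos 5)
Your proposal follows essentially the same route as the paper: both invoke Lemma~\ref{lemapexpath} to reduce to the condition that $a$ is near all six branch vertices of the split $K_{3,3}$, and then enumerate the possible configurations of $a$'s four neighbours (branch vertices versus subdivision points of edges), your ``token'' bookkeeping being a more systematic rendering of the paper's brief case split into three graphs when some neighbour has a branch vertex as its nearest part and four more otherwise. The only point needing care is the final tally: your organisation gives one class for $p=2$ and two for $p=1$ (agreeing with the paper's three), so you must check that your count of four-edge covers of $K_{3,3}$ in the $p=0$ case, together with the degenerate possibility you flag, really reconciles with the seven graphs of Figure~\ref{figda4}.
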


\begin{proof}
By Lemma~\ref{lemapexpath}, there are paths from $a$ to each branch vertex that avoid all other branch vertices. 
Let $N(a) = \{n_1,n_2,n_3,n_4\}$.
As there are six vertices and $d(a) = 4$, then there is an 
$n_i$, say $n_1$, that has an edge, say $v_1w_1$, as its nearest part. Since there are four branch vertices left and three neighbors of $a$, another $n_i$, say $n_2$, must have an edge
as its nearest part with vertices disjoint from $\{v_1, w_1\}$, call it $v_2 w_2$. 
There are three graphs generated
when $a$ has a neighbor whose nearest part is a branch vertex of $G$ and four more when $a$ has no such neighbor. Figure~\ref{figda4} shows the graphs that results from this condition.
\end{proof}

We conclude this section with a proof of Theorem~\ref{thmMMNA}. The proof requires one additional lemma. Let $\delta(G)$ and $\Delta(G)$ denote the 
\textbf{minimum} and \textbf{maximum degree} of graph $G$.

\begin{lemma} \label{lem34mix}
Suppose $G$ has $\delta(G) = 3$, $\Delta(G) = 4$, and $13 \leq \|G \| \leq 16$. 
Then either there is a degree $4$ vertex with a degree $3$ neighbor or else 
$G$ is the disjoint union $K_5 \sqcup K_4$.
\end{lemma}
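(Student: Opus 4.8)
The plan is to argue by contradiction: assume $G$ has $\delta(G)=3$, $\Delta(G)=4$, $13\le \|G\|\le 16$, and that \emph{no} degree $4$ vertex has a degree $3$ neighbor, and then show $G$ must be $K_5\sqcup K_4$. The hypothesis ``no degree $4$ vertex has a degree $3$ neighbor'' says precisely that the degree $4$ vertices and the degree $3$ vertices induce a separation: every edge joins two vertices of the same degree. So $V(G)$ partitions as $V_3\sqcup V_4$ with no edges between the parts, and $G$ is a disjoint union of a $3$-regular graph $G_3$ (on $V_3$) and a $4$-regular graph $G_4$ (on $V_4$). Since $\delta(G)=3$ and $\Delta(G)=4$, both parts are nonempty (if $V_4=\emptyset$ then $\Delta(G)=3$, and if $V_3=\emptyset$ then $\delta(G)=4$).

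Next I would pin down the orders using the edge count. A $3$-regular graph on $n_3$ vertices has $\tfrac{3}{2}n_3$ edges, so $n_3$ is even and $\|G_3\|\ge 6$ (the smallest $3$-regular graph is $K_4$, on $4$ vertices with $6$ edges). A $4$-regular graph on $n_4$ vertices has $2n_4$ edges, so $\|G_4\|\ge 10$ (the smallest $4$-regular graph is $K_5$, on $5$ vertices with $10$ edges; note $n_4\ge 5$). Hence $\|G\|=\tfrac32 n_3 + 2n_4 \ge 6+10 = 16$. Combined with $\|G\|\le 16$ this forces $\|G\|=16$ and equality throughout: $\|G_3\|=6$ and $\|G_4\|=10$, i.e. $n_3=4$, $n_4=5$. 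A $3$-regular graph on $4$ vertices is $K_4$, and a $4$-regular graph on $5$ vertices is $K_5$. Therefore $G=K_5\sqcup K_4$, as claimed. (The lower bound $\|G\|\ge 13$ is not even needed for this direction; it is presumably included because the lemma is invoked in that edge range.)

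The only place requiring a little care — and the step I expect to be the main obstacle to state cleanly — is the passage from ``$G$ is a disjoint union of a $3$-regular and a $4$-regular graph, each nonempty'' to the conclusion, because one must be sure there are no other small regular graphs sneaking in under the edge budget. This is handled by the handshake/minimum-order observations above: $3$-regularity forces an even number of vertices and at least $4$ of them; $4$-regularity forces at least $5$ vertices; and then the arithmetic $\tfrac32 n_3+2n_4\le 16$ with $n_3\ge 4$ even and $n_4\ge 5$ leaves only $(n_3,n_4)=(4,5)$. Once the orders are forced, uniqueness of $K_4$ as the $3$-regular graph on $4$ vertices and of $K_5$ as the $4$-regular graph on $5$ vertices finishes the argument. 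I would present the whole thing in three short moves: (1) the hypothesis gives a degree-homogeneous separation, hence a disjoint union $G_3\sqcup G_4$; (2) handshake bounds force $n_3=4$, $n_4=5$; (3) identify $G_3=K_4$, $G_4=K_5$.
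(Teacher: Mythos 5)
Your proposal is correct and follows essentially the same route as the paper: observe that the hypothesis forces $G$ to split as a disjoint union of a cubic part and a quartic part, note that the smallest cubic graph is $K_4$ (6 edges) and the smallest quartic graph is $K_5$ (10 edges), and conclude from $\|G\|\le 16$ that equality is forced. Your version just spells out the handshake arithmetic more explicitly than the paper does.
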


\begin{proof} 
For a contradiction, suppose no degree $4$ vertex has a degree $3$ neighbor. 
Then $G$ is disconnected with cubic and quartic components. The smallest quartic graph is $K_5$ with ten edges and the smallest cubic graph is $K_4$ with six.
So, the order of $G$ is at least 16 and $K_5 \sqcup K_4$ is the only way to realize that minimum. 
\end{proof}

\begin{proof} (of Theorem~\ref{thmMMNA})
As stated in Lemma~\ref{lemPMMNA}, the Petersen family graphs are all MMNA. What is left is to show that they are the only such graphs on 16 or fewer edges. Suppose $G$ is an MMNA graph with $16$ or fewer edges and suppose that it is not in the Petersen family. If $\delta(G) < 3$, then contracting an edge of a vertex of small degree or deleting an isolated vertex results in a proper minor 
that is still NA, contradicting minor minimality. So we assume $\delta(G) \geq 3$.

Then, since a non-planar graph has at least nine edges,
$G$ must have at least $12$ edges.
If $\|G \| = 12$, it must be cubic. But, then, 
removing a vertex $a$ results in $\| (G-a)^s \| = 6$ so that $G-a$ is planar and $G$ is apex, a contradiction. So we can assume $\|G \| \geq 13$.

Similarly, if $G$ has $13$ edges, then $G$ cannot have a vertex of degree five or more, lest $G-a$ be non-planar. On the other hand, $G$ is certainly not cubic, so,
by Lemma~\ref{lem34mix}, there is a degree $4$ vertex $a$ 
that has a degree 3 neighbor.
Again, $\|(G-a)^s \| \leq 8$, so that $G-a$ is planar, a contradiction. We can assume
$\|G\| \geq 14$.

Suppose $G$ has $14$ edges. 
If $G$ contains a degree 5 vertex $a$, then $G-a$ must be $K_{3,3}$.
By Lemma~\ref{lemapexpath}, $G$ cannot be NA. Suppose there's a degree $4$ vertex $a$ having a degree $3$ neighbor. Then $\| (G-a)^s \| \leq 9$, so $(G-a)^s = K_{3,3}$, as otherwise, $G$ is apex. This also means that $G-a$ is $K_{3,3}$ with a single edge subdivision. By Lemma~\ref{lemda4}, $G$ is not NA, a contradiction.

Having 14 edges, $G$ is not cubic and we've argued that there can be no degree $4$ vertex with a degree $3$ neighbor. So, by Lemma~\ref{lem34mix}, $G$ must be quartic.
Then deleting any vertex $a$ results in a $(6,10)$ graph. If $G$ is NA, $G-a =  K_{3,3}+e$ and since $G$ was $4$-regular, the $N(a)$ is 
exactly the degree three vertices in $K_{3,3}+e$. However, 
$G$ is then apex. We conclude $G$ must have at least $15$ edges.

If $G$ has $15$ edges, then $\Delta(G) \leq 6$ since a non-planar graph has at least nine edges. By Lemma~\ref{lemapathequiv},
if $(G-a)^s$
is $K_{3,3}$ or $K_5$, then $G$ is NA if and only if it has a minor from the Petersen family. Hence, if $G$ is an MMNA $15$ edge graph, finding a vertex whose removal induces a graph that
simplifies to $K_{3,3}$ or $K_5$ implies that $G$ is a member of the Petersen family. 
In particular, if $G$ is cubic (see Lemma~\ref{lemda3}), or has a
a vertex of degree $6$, then $G$ is a member of the Petersen family.

Let us assume that $\Delta(G) = 4$. Since there are no quartic graphs of $15$ edges, by Lemma~\ref{lem34mix}, there is a degree $4$ vertex $a$, with at least one neighbor of degree $3$. 
If $a$ has more than one neighbor
of degree $3$ or 
if $G-a$ is a subdivision of $K_5$ or $K_{3,3}$ then, by Lemma~\ref{lemapathequiv}, we are done. 
In particular, if $a$ has more than one neighbor of degree $3$, then 
$\|(G-a)^s\| \leq 9$. However, as $(G-a)^s$ must be non-planar, $(G-a)^s = K_{3,3}$ and we are done.

So we can assume $a$ has exactly one degree $3$ neighbor and 
that $G-a$ is a subdivision of a $10$ edge non-planar graph other than $K_5$.
Then this graph is either the simple graph $K_{3,3}+e$ or the multigraph
formed by doubling a single edge of $K_{3,3}$ (see Figure~\ref{figK33pe}).

If $G-a$ is the multigraph, then $(G-a)^s = K_{3,3}$ and we can apply Lemma~\ref{lemapathequiv}.
So, suppose $G-a$ is formed by subdividing an edge of $K_{3,3}+e$ (see Figure~\ref{figK33pe}b). If the subdivision is not on the added $v_2v_3$ edge, then $G-w_3$ is planar.  
This is because $a$ is not adjacent to
either $v_3$ or $v_2$ and there is only one additional vertex from subdivision in forming $G-a$. 
So it must be $v_2v_3$ that is subdivided to form $G-a$.
This means $\{w_1, w_2, w_3 \} \subset N(a)$ as otherwise $v_2$ will have two neighbors
of degree $3$. The resulting graph is $K_{4,4}-e$, a member of the Petersen family.

So, we can assume $\Delta(G) = 5$. Let $a$ be a degree $5$ vertex.
Then, being non-planar, $(G-a)^s$ has at least nine edges.
If $(G-a)^s$ is $K_5$ or $K_{3,3}$, Lemma~\ref{lemapathequiv}
implies that we are done. So $G-a$ must be $K_{3,3}+e$ shown in Figure~\ref{figK33pe}b.
Since $a$ is of degree $5$ it must be adjacent to either $w_2$ or $w_3$, say $w_3$. Then, 
$a$ must be adjacent to both $v_3$ and $v_2$, as otherwise $G-w_3$ is planar. However, $a$ must then also be adjacent to $w_1$ and $w_2$. If not,  $v_2$ is a degree $5$ vertex with a degree $3$ neighbor, meaning $G-v_2$ is planar, a contradiction. Thus, $N(a) = \{v_2, v_3, w_1, w_2, w_3\}$ and the resulting 
graph is the $(7,15)$ Petersen family graph that comes from a $\TY$ move on $K_6$. We call this graph $P_7$.

Next suppose $\|G \| = 16$. We can assume $\Delta(G) \leq 6$. Indeed, if
$\Delta(G) \geq 8$, there's a vertex $a$ whose deletion gives $G-a$ of size at most 
eight, hence planar. If $\Delta(G) = 7$, deleting a degree 7 vertex $a$ means
$\|G-a\| = 9$. As $G-a$ must be non-planar, it is $K_{3,3}$ and we can apply Lemma~\ref{lemapathequiv}.

Suppose $\Delta(G) = 6$ and let $a$ be a degree $6$ vertex. Then $G-a$ is a non-planar graph of size 10 and minimal degree at least two. If $G-a$ is $K_5$, we apply Lemma~\ref{lemapathequiv}, so we can assume $G-a$ is $K_{3,3}+e$ (see Figure~\ref{figK33pe}b).  Since $a$ has degree $6$ in $G$, it is adjacent to all vertices of $K_{3,3}+e$ so that $G$ has the Petersen family graph $K_{3,3,1}$ as a subgraph.

If $\Delta(G) = 5$, let $a$ be a vertex of top degree. There are two cases depending on whether or not $a$ has a degree 3 neighbor. If so, $\|(G-a)^s\| \leq 10$. By assumption, $(G-a)^s$ is non-planar and, if $(G-a)^s = K_5$ or $K_{3,3}$, we can
apply Lemma~\ref{lemapathequiv}. So we may assume that $(G-a)^s$ is the graph
$K_{3,3}+e$ (see Figure~\ref{figK33pe}b) and $G-a$ is formed by subdividing a single edge of that graph. If the subdivided edge is the added edge $v_2v_3$, then 
$w_3 \in N(a)$ as otherwise, $G-v_3$ is planar. By symmetry $w_1, w_2 \in N(a)$ as well and $G$ has the Petersen family graph $K_{4,4} -e$ as a subgraph. So, we 
can assume that it is not $v_2v_3$ that is subdivided. 

Suppose it is some other edge incident to $v_2$ or $v_3$, say $v_3w_3$, that is subdivided. Then $G-w_3$ is planar unless $v_2$ and $v_3$ are both neighbors of $a$. But in that case, there will be a degree 5 vertex $b$ with at least two degree 3 neighbors. This means $\|(G-b)^s \| \leq 9$, so it is either planar, a contradiction, or
$K_{3,3}$ and we can apply Lemma~\ref{lemapathequiv}. Thus, the subdivided edge
is adjacent to neither $v_2$ nor $v_3$. Without loss of generality, it is $v_1w_1$ that is split to create $G-a$. Still, $G-w_3$ is planar unless $v_2,v_3 \in N(a)$ and again we will be left with a degree 5 vertex with at least two degree 3 neighbors. 

\begin{figure}[ht]
\begin{center}
\includegraphics[scale=.45]{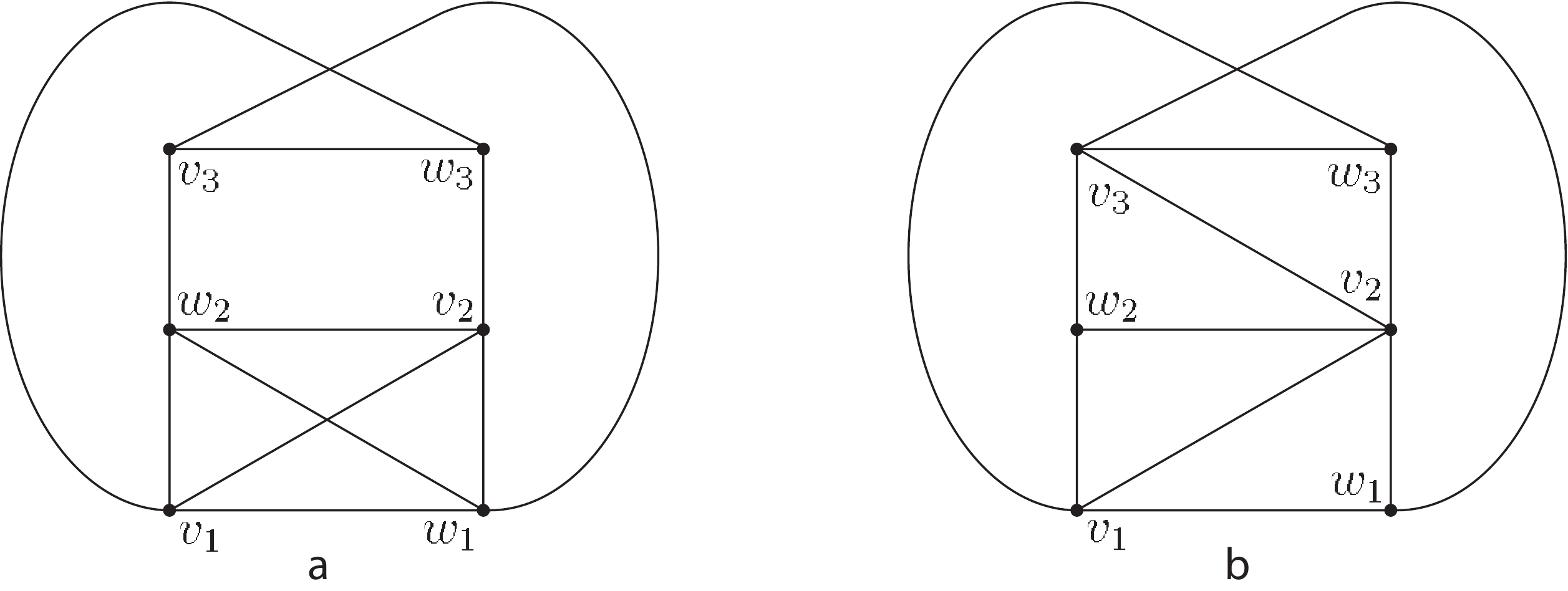}
\caption{Non-planar $(6,11)$ graphs with $\delta(G) \geq 3$.}\label{figNP611}
\end{center}
\end{figure}

So, we can assume $a$ has no degree $3$ neighbor. Then $G-a$ is non-planar, of size 11, and minimal degree three. The only possibilities are the $(6,11)$ graphs of Figure~\ref{figNP611} or the $(7,11)$ graph of Figure~\ref{figNP711}ii. We can assume that no degree 5 vertices have a degree 3 neighbor in $G$ as otherwise we return to the previous case. Suppose first that $G-a$ is the $(6,11)$ graph of Figure~\ref{figNP611}a. Then $N(a)$ must include $v_3$ and $w_3$, the degree 3
vertices of $G-a$ as otherwise there'll be a degree $5$ vertex with a degree $3$ neighbor. Without loss of generality, $w_1$ is the vertex of $G-a$ missing from $N(a)$. Then $G-v_1$ is planar, a contradiction. 
Similarly, if $G-a$ is the $(6,11)$ graph of Figure~\ref{figNP611}b, then, since we assumed $\Delta(G) = 5$, it's $v_2$ that is missing from $N(a)$, in which case $G-w_2$ is planar. 
Finally, suppose $G-a$ is the $(7,11)$ graph of Figure~\ref{figNP711}ii.
We see that $v_2 \in N(a)$ as otherwise, $G-w_3$ is planar. But then $v_2$ is a degree 5 vertex in $G$ and can have no degree 3 neighbors. Thus $N(a) = \{u, v_2, w_1, w_2, w_3 \}$ and contracting $uv_1$ gives the Petersen family graph $P_7$ as a minor. (Recall that $P_7$ is the result of a $\TY$ move on $K_6$.) 

Next assume $\Delta(G) = 4$. If $G$ is quartic, it is one of the six quartic graphs of order eight. Only two of these are NA. One is $K_{4,4}$, which has the Petersen family graph $K_{4,4}-e$ as a subgraph. The other comes from splitting the degree
6 vertex of the Petersen family graph $K_{3,3,1}$. Thus, we can assume $\delta(G) = 3$ and, by Lemma~\ref{lem34mix}, there is a degree $4$ vertex $a$ with a degree $3$ neighbor. Then $\|(G-a)^s\| \leq 11$. By Lemma~\ref{lemapathequiv}, 
$(G-a)^s$ is of size 10 at least, so we can assume each degree $4$ vertex has at most two degree 3 neighbors.

Suppose then that $\| (G-a)^s \| = 10$ meaning $G-a$ is formed by making two edge
subdivisions on $K_{3,3}+e$ (Figure~\ref{figK33pe}b). Suppose further that neither of the subdivisions occur on the added edge $v_2v_3$. Then $G-w_3$ is planar unless the subdivisions are on the edges $v_2w_2$ and $v_3w_2$ (or $v_2w_1$ and $w_3w_1$, a case we can omit due to symmetry.) If these are the subdivisions, then $w_3 \in N(a)$ as otherwise $G-v_3$ is planar. Finally, deleting the vertex on $v_2w_2$ formed by the subdivision, call it $u$, gives a planar graph unless $w_1 \in N(a)$. So, we can assume $a$ is adjacent to $u$, $w_1$, and $w_3$ as well as the vertex formed by subdividing $v_3w_2$. Then, contracting $uw_2$ leads to the $(8,15)$ Petersen family graph resulting from two $\YT$ moves on the Petersen graph. We call this $(8,15)$ graph $P_8$. So, assuming there is no subdivision on $v_2v_3$ leads to a graph with a Petersen family graph minor. 

Thus, we can assume there is at least one subdivision on $v_2v_3$. This means that $v_2$ and $v_3$ already have one degree $3$ neighbor. Since they may have at most two, then two of $w_1$, $w_2$, and $w_3$, say the last two, are adjacent to $a$. In order that $G-w_2$ and $G-w_3$ are both non-planar, the final neighbor of $a$, call it $u$, arises by subdivision of an edge incident to $w_1$. Then contracting $uw_1$ shows that $G$ has the Petersen family graph $K_{4,4}-e$ as a minor.
So, we can assume $\| (G-a)^s \| \geq 11$.

Since $\delta(G)  = 3$, then $|G| \geq 9$. So, if $(G-a)^s $ has size 11, then it has at least order seven. Thus, $(G-a)^s$ is the $(7,11)$ graph of Figure~\ref{figNP711}ii 
and $G-a$ is formed by a single subdivision.  Also, we may assume every degree 4 vertex has at most one degree 3 neighbor (as otherwise we return to the previous case). So that both $G-w_2$ and $G-w_3$ are non-planar, the subdivision must be of $uv_2$ or $v_2w_1$. Either way, this constitutes a degree $3$ neighbor of $v_2$ and its remaining neighbors must all be adjacent to $a$. However, in both cases, this results in a degree $4$ vertex (e.g., $w_1$ or $u$, respectively) with two degree
$3$ neighbors, which puts us back in the previous case. This completes the
argument in the case $\|G \| = 16$ and with it the proof.
\end{proof}

\section{14 vertex graphs}

In this section we show the following (originally proved in \cite{BM}):
\begin{prop} 
If $G$ is a $(14,21)$ MMN2A graph, then $G$ is in the Heawood family.
\end{prop}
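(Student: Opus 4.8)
The plan is to pin down $G$ by using that it must be cubic and that deleting any single vertex leaves a graph whose simplification is forced to be the Petersen graph $P_{10}$.

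First I would observe that $G$ is $3$--regular: since $G$ is MMN2A we have $\delta(G)\ge 3$, and then $\sum_v d(v)=2\cdot 21=42$ over $14$ vertices forces every vertex to have degree exactly three. Now fix a vertex $a$. As $G$ is not $2$--apex, $G-a$ is NA, so $(G-a)^s$ is NA (simplification preserves apex--ness), and $(G-a)^s$ has at most ten vertices, because the simplification must suppress the three degree--two neighbours of $a$. Every NA graph has an MMNA minor, which by Theorem~\ref{thmMMNA} lies in the Petersen family; since every Petersen family graph has exactly $15$ edges, $\|(G-a)^s\|\ge 15$. On the other hand $\|G-a\|=18$, and suppressing each of the three degree--two neighbours of $a$ removes two edges while adding at most one, so $\|(G-a)^s\|\le 15$. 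Hence $\|(G-a)^s\|=15$, and equality forces those to be the only suppressions, each genuinely creating a \emph{new} edge, with no cascade of further reductions and no parallel edge ever produced. Thus $(G-a)^s$ is a simple $15$--edge NA graph on ten vertices, hence a Petersen family graph, hence $P_{10}$, the only Petersen family graph of order ten. So $(G-a)^s=P_{10}$ for every vertex $a$.

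Next I would show $G$ has girth at least $6$, using that $P_{10}$ has girth $5$ together with the rigidity of the simplification just established. A triangle $v_1v_2v_3$ is impossible: taking $a$ to be the third neighbour of $v_1$, the suppression of $v_1$ in $G-a$ would attempt to re--add the already present edge $v_2v_3$, contradicting that every suppression adds a new edge. A $4$--cycle is impossible: it is chordless (as $G$ is triangle--free), so it and its off--cycle neighbours span at most $8$ vertices, leaving some vertex $a$ off the cycle and non--adjacent to it; then all four cycle vertices keep degree three in $G-a$ and survive in $(G-a)^s$, so $(G-a)^s$ contains a $4$--cycle, contradicting girth $5$. Finally a $5$--cycle $C=v_1v_2v_3v_4v_5$ is impossible: since $G$ is now triangle-- and $C_4$--free, each $v_i$ has a unique neighbour $w_i\notin V(C)$, the $w_i$ are distinct, and $w_1$ has no further neighbour on $C$; so in $G-w_1$ only $v_1$ among $V(C)$ has reduced degree, and suppressing it collapses $C$ to the $4$--cycle $v_2v_3v_4v_5$ inside $(G-w_1)^s=P_{10}$, again contradicting girth $5$.

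It then remains only to recognize the graph: $G$ is a cubic graph of girth at least $6$ on $14$ vertices, and $14$ is exactly the Moore bound for cubic graphs of girth $6$, so classically the Heawood graph is the unique such graph. Hence $G$ is the Heawood graph, which is the $(14,21)$ member of the Heawood family (graph $18$ in Figure~\ref{figHea}), completing the proof. The one delicate point is the girth argument, and within it the $5$--cycle case: there a counting bound does not suffice, since a pentagon of $P_{10}$ survives many vertex deletions, and one must instead exploit the ``no cascade, no parallel edge'' structure of the simplification forced in the first step.
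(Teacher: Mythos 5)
Your proof is correct, but it takes a genuinely different route from the paper's. The paper also begins by noting $G$ is cubic, but then deletes \emph{two} vertices $a,b$ with disjoint neighborhoods so that $G-a,b$ is a split $K_{3,3}$, applies Lemma~\ref{lemda3} to recognize $(G-a,b+a)^s$ as the Petersen graph, and then uses the ``nearness'' machinery of Lemma~\ref{lemapexpath} to pin down where $b$ attaches, checking that all but one arrangement makes some $G-w_2,z$ or similar pair planar. You instead lean much harder on Theorem~\ref{thmMMNA}: the edge count $18-3=15$, squeezed against the fact that every Petersen family graph has exactly $15$ edges, forces $(G-a)^s=P_{10}$ for \emph{every} vertex $a$ with a ``tight'' simplification (each of the three suppressions creates a genuinely new edge, no cascading); from that rigidity you extract girth at least $6$ (the $5$--cycle case, as you note, is the one place where a crude count fails and the no-repeated-edge property of the suppressions is essential), and then invoke the classical fact that the Heawood graph is the unique $14$--vertex cubic graph of girth $6$ (the Moore bound/$(3,6)$--cage uniqueness). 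Both arguments are sound; yours avoids the attachment case analysis and is arguably more conceptual, but it is less self-contained, since the cage uniqueness theorem is imported from outside the paper and would need a citation (it is not a consequence of anything proved here). Two small points to tighten: state explicitly that girth at least $7$ is impossible (the Moore bound for a cubic graph of girth $7$ is $22>14$), so the girth is exactly $6$; and in the first step, note that the MMNA minor guaranteed for $(G-a)^s$ automatically has at most $15$ edges (being a minor of a $15$--edge graph), which is what licenses the appeal to Theorem~\ref{thmMMNA} and the conclusion that the minor, having the full $15$ edges and the host having minimum degree three, \emph{is} the whole graph.
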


\begin{proof} Let $G$ be a $(14, 21)$ MMN2A graph. We can assume $\delta(G) \geq 3$ as otherwise a vertex deletion or
edge contraction on a small degree vertex will give a proper minor that is also N2A.
Then $G$ must have the degree sequence $(3^{14})$ and for any $a \in V(G)$, $G-a$ has the sequence $(3^{10}, 2^3)$. Now choose another vertex, $b$, such that $G^* = G-a,b$ has the sequence $(3^6,2^6)$ (i.e., $a$ and $b$ have no common neighbors). There are enough degree $3$ vertices in $G-a$ to assure we can always choose such a $b$.

Since  $G$ is N2A and $G^*$ has the sequence $(3^6,2^6)$, then $G^*$ must be a split $K_{3,3}$.
By Lemma~\ref{lemda3}, $(G^*+a)^s$ is the Petersen graph.
Then $G' = (G^{\ast}+a) - w_3$ is another split $K_{3,3}$.

\begin{figure}[ht]
\begin{center}
\includegraphics[scale=0.5]{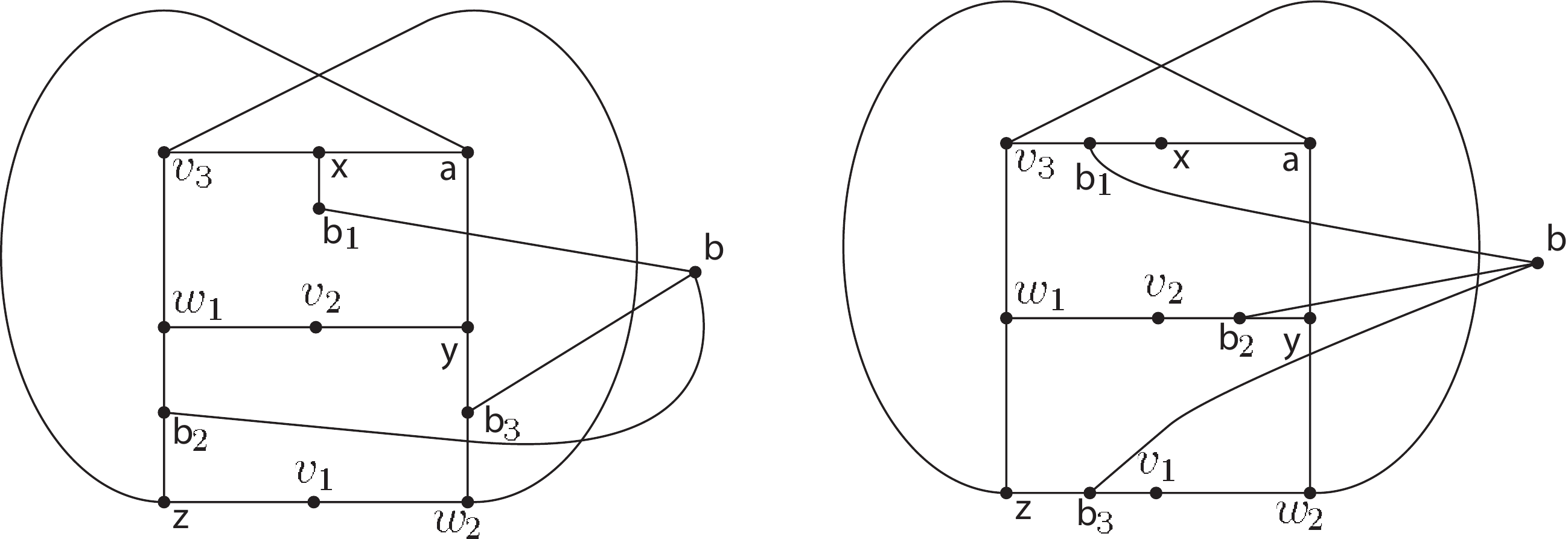}
\caption{Two possibilities for $G^\prime+b$.}\label{figda3andb}
\end{center}
\end{figure}

By  Lemma~\ref{lemapexpath}, $b$ must have a path to $a$ that avoids $v_3$, $w_1$, $w_2$, $y$, and $z$.
Since $a$ and $b$ have no common neighbors, this means $b$ has a neighbor $b_1$ that is adjacent
to $x$. So, there are two cases: in $G'+b$, either $b_1$ is of degree two, or else it has $v_3$ as a third neighbor. (See Figure~\ref{figda3andb}.)

In either case, $b_1$ gives paths from $b$ to the  branch vertices $a$ and $v_3$ and there are three ways to split the remaining four  branch vertices into two pairs. However, we see that $G- w_2,z$ is planar 
(and $G$ is $2$--apex), unless we make the choices shown in Figure~\ref{figda3andb}.
In both cases, adding $w_3$ back will give us the Heawood graph.
Hence the only (14,21) MMN2A graph is the Heawood graph, which is in the
Heawood family.
\end{proof}

\section{13 vertex graphs}

In this section we prove the following:
\begin{prop} If $G$ is a $(13, 21)$ MMN2A graph, then $G$ is in the Heawood family. 
\end{prop}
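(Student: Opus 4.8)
The plan is to follow the template of the $(14,21)$ case but with substantially heavier casework, since more degree sequences are possible. As always we may assume $\delta(G)\ge 3$, because a vertex of degree $\le 2$ can be deleted or have an incident edge contracted to produce a proper N2A minor. Since $\sum_{v}d(v)=2\|G\|=42$ and $|V(G)|=13$, the degree sequence of $G$ is one of $(6,3^{12})$, $(5,4,3^{11})$, or $(4^{3},3^{10})$. Throughout, the engine of the argument is that $G$, every $G-x$, and every $G-x,y$ is non-planar (otherwise $G$ is $2$--apex); equivalently, whenever a deletion or a simplification step lands on a planar graph we have reached a contradiction, so every such ``lost'' non-planarity must be convertible into an explicit planar $G-c,d$.

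The core reduction is the familiar one: in each degree case I would pick two vertices $a,b$ — taken of degree $3$ when possible, and non-adjacent with no common neighbour so as to create as many subdivision vertices as possible — and examine $G^{*}:=(G-a,b)^{s}$. In the favourable cases $G-a,b$ is non-planar with at most six branch vertices, so $G^{*}$ is $K_{5}$ or $K_{3,3}$, and $G$ is recovered from a subdivision of $K_{5}$ or $K_{3,3}$ by adding $a$ and $b$ back one at a time (possibly re-deleting an intermediate vertex to return to a split $K_{3,3}$, exactly as in the $(14,21)$ proof where $w_{3}$ is removed after adding $a$). Since $G$, and hence each of $G-a$ and $G-b$, is NA, Lemma~\ref{lemapexpath} forces each added-back vertex to be near every branch vertex; then Lemma~\ref{lemda3} or Lemma~\ref{lemda4} (according to whether the added vertex has degree $3$ or $4$), together with the analogous analysis when the base graph is $K_{5}$, pins down the intermediate $(G-b)^{s}$ to the Petersen graph or one of the seven graphs of Figure~\ref{figda4}. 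Repeating the nearness analysis for the second added vertex via Lemma~\ref{lemapexpath} applied to $G$ then cuts $N(a)$ and $N(b)$ down to a short finite list of candidate graphs; each candidate that is not a Heawood family graph is eliminated by exhibiting a pair $c,d$ with $G-c,d$ planar. One checks directly that each $13$-vertex Heawood family graph is MMN2A, so the surviving list matches exactly.

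Two features make this case genuinely longer than the $(14,21)$ one. First, for the degree sequence $(4^{3},3^{10})$ (and sometimes $(5,4,3^{11})$) the double deletion need not land on $K_{5}$ or $K_{3,3}$; one is forced through larger non-planar intermediates — $K_{3,3}+e$, the $(6,11)$ graphs of Figure~\ref{figNP611}, or a $(7,11)$ graph — precisely the situation already met in the proof of Theorem~\ref{thmMMNA}, and each such intermediate must be expanded case by case with the same ``$G-c,d$ planar'' pruning. Second, when $a$ and $b$ both have degree $3$ and their neighbours fall on subdivision (degree-$2$) vertices of the base rather than on its branch vertices, ``near every branch vertex'' admits many non-isomorphic placements of $N(a)$ and $N(b)$, so the bookkeeping of where these neighbourhoods sit relative to the subdivided $K_{3,3}$ or $K_{5}$ is extensive.

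The step I expect to be the main obstacle is exactly this last bookkeeping: controlling the combinatorial explosion of placements of the two re-added vertices against the subdivided base graph, using the symmetry of $K_{3,3}$ and $K_{5}$ to collapse most placements, and — each time simplification discards non-planarity — reliably producing a concrete planar $G-c,d$ to close that branch. Organizing the casework so that the only configurations that survive all of these planarity tests are precisely the $13$-vertex members of the Heawood family is where the real work lies.
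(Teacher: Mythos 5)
Your outline correctly identifies the three candidate degree sequences, but as written it is a plan rather than a proof, and the plan misses the two observations that make this case short. First, the sequences $(3^{12},6)$ and $(3^{11},4,5)$ die immediately: deleting the vertex of maximum degree leaves a graph whose simplification has fewer than $15$ edges (the degree-$6$ vertex has six degree-$3$ neighbours; the degree-$5$ vertex has at least four), so by Theorem~\ref{thmMMNA} that simplification is apex and $G$ is $2$--apex. You never eliminate these sequences; you propose to run the full double-deletion machinery on them, which is both unnecessary and, in your sketch, unexecuted. Second, for the surviving sequence $(3^{10},4^3)$ the right move is a \emph{single} deletion, not a double one: the same edge count forces each degree-$4$ vertex to have at most two degree-$3$ neighbours, hence the three degree-$4$ vertices form a triangle, hence there is a degree-$3$ vertex $a$ all of whose neighbours have degree $3$. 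Then $(G-a)^s$ is an NA graph with exactly $15$ edges and degree sequence $(3^6,4^3)$, and Theorem~\ref{thmMMNA} identifies it outright as the Petersen family graph $P_9$ of Figure~\ref{figP9}. All that remains is to place $a$ against $P_9$ using Lemma~\ref{lemapexpath} applied to $(G-a,y)^s=K_{3,3}$ and its symmetric variants, which pins $G$ down to $C_{13}$ after two explicit planarity checks ($G-v_3,w_3$ and $G-v_1,w_1$).

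By contrast, your route deletes two vertices to reach $K_{3,3}$ or $K_5$ and then re-adds them via Lemmas~\ref{lemda3} and~\ref{lemda4}, conceding that you may be forced through larger intermediates such as $K_{3,3}+e$ or the graphs of Figure~\ref{figNP611}, and you explicitly defer the resulting bookkeeping as ``where the real work lies.'' That deferred bookkeeping \emph{is} the proof; without it nothing has been established. There is also a concrete pitfall in the double-deletion setup you propose: with degree sequence $(3^{10},4^3)$, if you choose two non-adjacent degree-$3$ vertices $a,b$ with no common neighbours and none of the degree-$4$ vertices among their neighbours, then $(G-a,b)^s$ is a $(5,9)$ graph, not $K_{3,3}$, so the hypothesis of Lemma~\ref{lemda3} fails and your ``favourable case'' does not arise the way you expect. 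The classification tool you are not exploiting is Theorem~\ref{thmMMNA} applied to the $15$-edge simplification after a single deletion; that is the missing idea.
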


\begin{proof}
Let $G$ be a MMN2A $(13,21)$ graph.
Consider the degree sequences $(3^{12},6)$ and $(3^{11},4,5)$. If we remove the vertex of highest degree the resulting graph simplifies to a graph with fewer than $14$ edges, hence (by Theorem~\ref{thmMMNA}) to an apex graph. So
$G$ does not have such a degree sequence.

Then $G$ has the sequence $(3^{10},4^3)$. Again,
if $a$  is a vertex of degree $4$ that has three neighbors of degree $3$, then $(G-a)^s$ is apex, so this cannot be the case. We conclude
that the degree $4$ vertices form a triangle in $G$ and that there is a degree $3$ vertex in $G$, call it $a$, whose neighbors all have degree $3$.
This means that $G-a$ simplifies to a graph $G^* = (G-a)^s$ with degree sequence $(3^{6},4^3)$. Since $G^*$ must be NA, and has $15$ edges, by Theorem~\ref{thmMMNA} it is in the Petersen family.  There is a unique nine vertex graph in the family, which we call $P_9$, see Figure~\ref{figP9}.

\begin{figure}[ht]
\begin{center}
\includegraphics[scale=0.5]{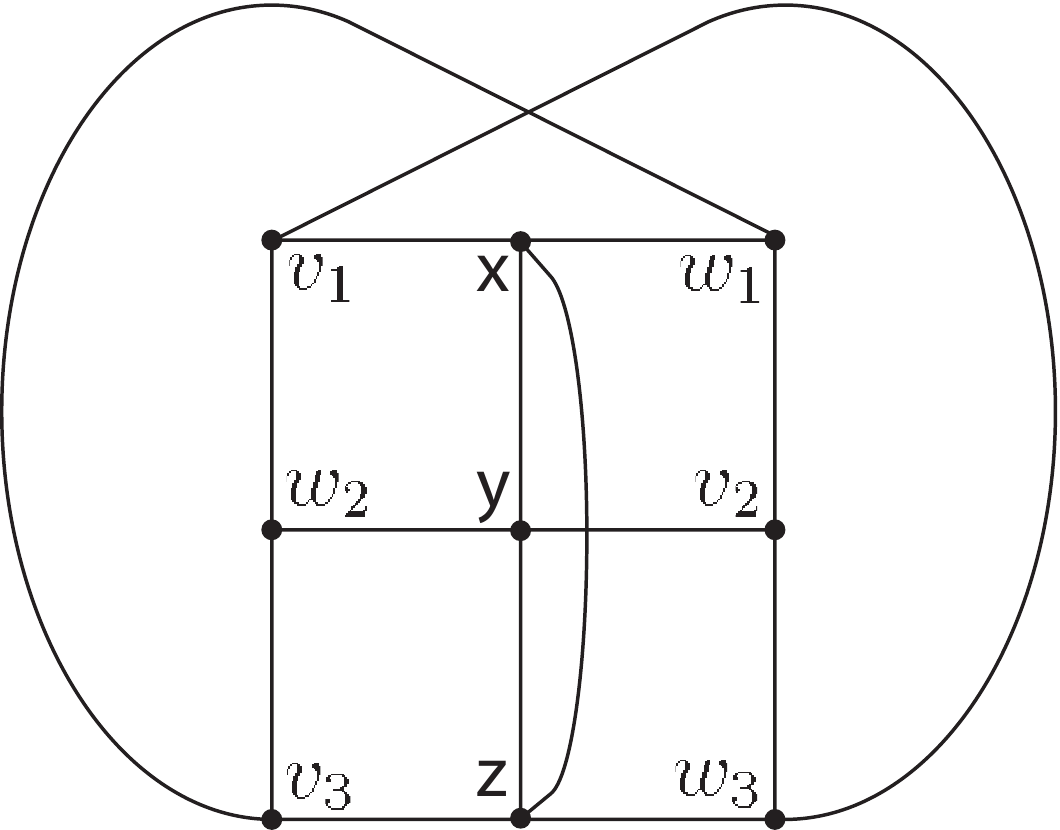}
\caption{The Petersen family graph $P_9$.}\label{figP9}
\end{center}
\end{figure}

Note that in Figure~\ref{figP9} there is a unique triangle, which we'll denote $xyz$
and label the corresponding vertices in $G-a$ 
and $G$ as $x$, $y$, and $z$ as well.  
Notice also that $x$, $y$ and $z$ all have degree $4$ in $G^*$ so none of them are neighbors of $a$ in $G$. Moreover, we assumed $x$, $y$ and $z$ form a triangle in $G$, and since the triangle is clearly preserved
in $G^*$, it must also be preserved in $G-a$. In particular, this implies that $a$ is not near any of the edges that form this triangle, i.e., none of the degree $2$ vertices deleted in simplifying from $G-a$ to $G^*$ are on the edges of the
triangle. 

Observe that $(G-a,y)^s = K_{3,3}$ and that the induced graph after adding $a$ back must be NA. Hence, by Lemma~\ref{lemapexpath}, $a$ must have a path to each  branch vertex that does not go through any other  branch vertex. Since $a$ is not near the edge $xz$, it must be near either edges $xw_1$ or $xv_1$ and $zw_3$ or $zv_3$. Similarly, $(G-a,x)^s$ shows that $a$ must also be near
$yw_2$ or $yv_2$. 

We claim that $a$ is near $xw_1$, $yw_2$, and $zw_3$ or $xv_1$, $yv_2$, and $zv_3$, in which case $G$ is the Heawood family graph $C_{13}$. (See \cite{HNTY} for the names, like $C_{13}$, of the Heawood family graphs. This is the unique order 13 graph in the Heawood family and corresponds to graph 15 in Figure~\ref{figHea}). Otherwise, either $a$ is near $xv_1$ and $yw_2$ or $xw_1$ and $yv_2$, in which case 
$G-v_3,w_3$ is planar, or else $a$ is near $zv_3$ and $yw_2$ or $zw_3$ and $yv_2$ in which case $G-v_1,w_1$ is planar. Therefore the proposition is proved.
\end{proof}

\section{12 vertex graphs}

In this section we prove that a $(12,21)$ MMN2A graph $G$ is in the Heawood family. This means $G$ is one of three graphs that are called $H_{12}$, $C_{12}$, and $N'_{12}$ by Hanaki et al.~\cite{HNTY} and are represented as graphs 12, 13, and 19, respectively, in Figure~\ref{figHea}. We first observe that if $G$ is triangle-free
and of the correct degree sequence, it must be $H_{12}$. This was originally proved in \cite{BM}.

\begin{lemma} \label{lemtfreeH12}%
Let $G$ be MMN2A of degree sequence $(3^6,4^6)$ and triangle free. Then $G$ is $H_{12}$.
\end{lemma}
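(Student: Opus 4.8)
The plan is to delete one carefully chosen vertex $a$ so that $(G-a)^s$ is, or is only one edge away from, a graph in the Petersen family, and then to use the N2A hypothesis --- applying Lemma~\ref{lemapexpath} to the graphs $G-a-b$ as $b$ ranges over branch vertices of $G-a$ --- to pin down how $a$ attaches to $G-a$.

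First, a counting step to locate $a$. Writing $E_{33}$, $E_{34}$, $E_{44}$ for the numbers of edges within the degree-3 vertices, between the degree classes, and within the degree-4 vertices, the degree sums give $2E_{33}+E_{34}=18$ and $2E_{44}+E_{34}=24$. Summing the number of degree-3 neighbors over all twelve vertices also gives $2E_{33}+E_{34}=18$, so some vertex has at least two degree-3 neighbors. A short case split on $E_{33}$ (using that $G$ is triangle-free, so the subgraph on the degree-3 vertices has maximum degree at most $3$, and that $G$ is N2A, so the configuration $E_{33}=9$, which disconnects $G$ into $K_{3,3}$ and a planar component, cannot occur) shows that we may in fact take $a$ to be a degree-3 vertex with three degree-3 neighbors, or a degree-4 vertex with two degree-3 neighbors, or a degree-3 vertex with two degree-3 neighbors. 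Deleting $a$ removes $d(a)$ edges and suppressing each now-degree-2 neighbor removes at least one more, so $\|(G-a)^s\|\le 15$ in the first two cases and $\le 16$ in the last. Since $G$ is N2A, $(G-a)^s$ is NA and in particular non-planar, so by Theorem~\ref{thmMMNA} it has a Petersen-family minor; when it has at most $15$ edges it \emph{is} a Petersen-family graph, since every such graph has exactly $15$ edges and $(G-a)^s$, being simplified, has minimum degree $\ge 3$. (The same observation shows that a $14$-edge graph cannot be NA, which forces the simplification to remove exactly the predicted number of edges.) As $G-a$ has eleven vertices with at least two suppressed, $(G-a)^s$ has at most nine vertices, so the candidates are $K_6$, $K_{3,3,1}$, $P_7$, $K_{4,4}-e$, $P_8$, $P_9$ --- together, in the $16$-edge case, with the finitely many NA graphs obtained from one of these by adding an edge.

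For each candidate, $G-a$ is a subdivision of it and $a$ is joined to it along certain (possibly subdivided) edges and branch vertices. Deleting one more branch vertex $b$ usually leaves $(G-a-b)^s = K_{3,3}$ or $K_5$, and then Lemma~\ref{lemapexpath} forces $a$ to be near every branch vertex of $G-a-b$; running over enough choices of $b$ determines the nearness data of $a$ up to the automorphisms of $(G-a)^s$. One checks case by case that every resulting configuration but one makes some two-vertex deletion $G-b-c$ planar --- contradicting N2A --- and that the surviving configuration, after the suppressed degree-2 vertices are restored, is exactly $H_{12}$. Triangle-freeness and the degree sequence $(3^6,4^6)$ are used repeatedly to discard configurations that would create a triangle or a vertex of the wrong degree.

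The main obstacle is this final case analysis: each candidate $(G-a)^s$ carries its own automorphism group, and for each we must enumerate the attachments of $a$ allowed by the nearness constraints and then eliminate all but one via planarity of a two-vertex deletion. The $16$-edge subcase, where $(G-a)^s$ lies just outside the Petersen family, is the most delicate part; the counting argument and the appeals to Lemma~\ref{lemapexpath} and Theorem~\ref{thmMMNA} are routine by comparison.
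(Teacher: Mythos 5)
Your overall strategy is the paper's: delete a well-chosen vertex $a$, identify $(G-a)^s$ via Theorem~\ref{thmMMNA}, and use Lemma~\ref{lemapexpath} to force the attachments of $a$. But there is a concrete gap in the setup that makes your plan harder than the paper's and, as written, incomplete. Your counting only yields the trichotomy ``$a$ is degree $3$ with three degree-$3$ neighbors, or degree $4$ with two degree-$3$ neighbors, or degree $3$ with exactly two degree-$3$ neighbors,'' and it is the third alternative that causes trouble: there $\|(G-a)^s\|$ can be $16$, and Theorem~\ref{thmMMNA} then tells you only that $(G-a)^s$ has a Petersen-family \emph{minor}, not that it is a Petersen-family graph or ``one of these plus an edge.'' A $(9,16)$ NA graph with a Petersen-family minor need not contain a Petersen-family subgraph with one extra edge, so your proposed enumeration of the $16$-edge candidates is unjustified; you would need a separate classification of such graphs, which you do not supply. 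The paper avoids this case entirely with a sharper structural dichotomy: if no degree-$3$ vertex has all three neighbors of degree $3$ and no degree-$4$ vertex has two degree-$3$ neighbors, then $E_{34}=6$, so the six degree-$4$ vertices induce a triangle-free $9$-edge graph on six vertices, hence $K_{3,3}$; deleting two nonadjacent degree-$4$ vertices then leaves a graph simplifying to size eight, contradicting N2A. With that dichotomy only two cases remain, and both are settled quickly (the first leads to $(G-a)^s=K_{4,4}-e$ and a contradiction; the second to $(G-a)^s=P_9$ and then to $H_{12}$).

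Separately, even in the two cases you share with the paper, your argument is a program rather than a proof: the decisive step --- enumerating the nearness configurations of $a$ modulo the automorphisms of $K_{4,4}-e$ and $P_9$, showing all but one admit a planar two-vertex deletion, and verifying the survivor is $H_{12}$ --- is asserted, not carried out. Since that case analysis is where essentially all the content of the lemma lives (triangle-freeness, for instance, is what forces a subdivision onto the triangle of $P_9$), the proposal cannot be accepted as a proof without it.
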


\begin{proof} 
Note that if any of the vertices of degree $4$ have three or more neighbors of degree $3$, removing such a vertex results in an apex graph by 
Theorem~\ref{thmMMNA}, so we may assume this doesn't happen.
We also notice that we can either single out a degree 3 vertex, all of whose neighbors are degree 3 vertices, or a degree 4 vertex that has two degree 3 neighbors.
To see this, suppose it is not the case. Since $G$ has no triangles, the subgraph induced by the degree $4$ vertices is $K_{3,3}$ and each of the vertices has a unique neighbor of degree $3$. 
Hence, removing two non-adjacent vertices of degree $4$ results in a graph that simplifies to a graph of size eight, thus planar. Hence $G$ would not be $2$-apex.

Now assume that we do not have a vertex of degree $4$ with two degree $3$ neighbors. Say that $a$ is a degree $3$ vertex whose neighbors are all of degree $3$. 
Then $(G-a)^s$ has degree sequence $(3^2,4^6)$. Theorem~\ref{thmMMNA} implies that it is $K_{4,4}-e$. Because $G$ has no degree 4 vertex with two degree $3$ neighbors, we know that the edge subdivisions
from $(G-a)^s$ to $G-a$ are all on edges incident to the degree 3 vertices of $(G-a)^s$. Since there are exactly three subdivisions from $(G-a)^s$ to $G-a$, there is one vertex of degree $3$ in $(G-a)^s$ that
gets at least two subdivisions, call it $a_1$. So, $a_1$ has degree 4 neighbors 
$v_1$, $v_2$ in $(G-a)^s$ so that $a_1v_1$ and $a_2v_2$ are subdivided in
forming $(G-a)$.
Then $G-v_1,v_2$ is planar and $G$ is $2$-apex.

So we may assume that $a$ has degree $4$ and there exist $b,c\in N(a)$ such that $d(b)=d(c)=3$ and $c\neq b$. Then $(G-a)^s$ has degree sequence $(3^6,4^3)$ which tells us, by Theorem~\ref{thmMMNA}, that it is
$P_9$. Furthermore, since $G$ does not have a triangle, we know that one of the subdivisions from $(G-a)^s$ to $G-a$ is on the triangle $xyz$ of Figure~\ref{figP9};
say it's $xy$ that is subdivided.
Removing either $x$ or $y$, Lemma~\ref{lemapexpath} tells us that the other subdivision from $(G-a)^s$ to $G-a$ must be on an edge incident to $z$. We may say it is the edge $yz$ without losing generality. Now, remove $y$
and it is easy to see that Lemma~\ref{lemapexpath} forces $a$ to be adjacent to $w_2$ and $v_2$. Therefore $G$ is $H_{12}$.
 \end{proof}
 
\begin{prop} If $G$ is a $(12,21)$ MMN2A graph, then $G$ is in the Heawood family.
\end{prop}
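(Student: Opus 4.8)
The plan is to pin down the degree sequence of $G$ as $(3^6,4^6)$, dispose of the triangle-free case via Lemma~\ref{lemtfreeH12}, and then handle the case that $G$ contains a triangle by reducing to the nine-vertex Petersen family graph $P_9$, much as in the $(13,21)$ argument. Since $G$ is MMN2A we may assume $\delta(G)\ge 3$, and $G$ is connected (a disconnected MMN2A graph would have a non-apex component with at least $15$ edges by Theorem~\ref{thmMMNA}, together with a non-planar component with at least $9$, hence at least $24$ edges). With $\|G\|=21$ and $|G|=12$, the degree sequence exceeds $(3^{12})$ by a total of $6$. For each vertex $v$, the graph $G-v$ is NA (being N2A means each single vertex deletion leaves an NA graph), so $\|(G-v)^s\|\ge 15$ by Theorem~\ref{thmMMNA}; since every degree-$3$ neighbour of $v$ becomes a degree-$2$ vertex that costs at least one edge under simplification, this forces $\Delta(G)\le 5$ and shows a degree-$5$ vertex has at least four neighbours of degree at least $4$. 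Scanning the partitions of $6$, the only degree sequence with a vertex of degree $5$ that survives is $(3^7,4^4,5)$; there the degree-$5$ vertex $a$ must be adjacent to all four degree-$4$ vertices, $(G-a)^s$ is a cubic NA graph on $15$ edges, hence (Theorem~\ref{thmMMNA}) the Petersen graph, and a short argument exhibiting two vertices whose deletion planarizes $G$ contradicts minor-minimality. Thus $G$ has degree sequence $(3^6,4^6)$.

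Next I would record two structural facts obtained by the same deletion technique. No degree-$4$ vertex has three degree-$3$ neighbours, since deleting such a vertex would leave a graph that simplifies to at most $14$ edges, too few to be NA. And, counting the edges between the six degree-$3$ and six degree-$4$ vertices and using this, I would show $G$ has a degree-$4$ vertex $v$ with exactly two degree-$3$ neighbours: the only obstruction is the configuration in which every degree-$4$ vertex has exactly one degree-$3$ neighbour, which forces the degree-$4$ subgraph to be $K_{3,3}$ or the triangular prism and the degree-$3$ subgraph to be a disjoint union of cycles, and in that case one checks that deleting two well-chosen degree-$4$ vertices planarizes $G$. Given such a $v$, we have $\|(G-v)^s\|=15$ with $(G-v)^s$ NA, so $(G-v)^s$ is the unique nine-vertex Petersen family graph $P_9$, and $G-v$ is $P_9$ with two subdivided edges.

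If $G$ is triangle-free, Lemma~\ref{lemtfreeH12} gives $G=H_{12}$, so assume $G$ contains a triangle. Recall that $P_9$ has a unique triangle $xyz$, whose vertices have degree $4$ in $P_9$ and hence, since $\Delta(G)=4$, are not adjacent to $v$ in $G$; so in $G$ they are degree-$4$ vertices. As in the $(13,21)$ proof, for each $w\in\{x,y,z\}$ the graph $G-w$ is NA and $(G-v-w)^s=K_{3,3}$, so Lemma~\ref{lemapexpath} forces $v$ to be near every branch vertex of that $K_{3,3}$. I would impose these three nearness conditions simultaneously, together with the fact that two of $v$'s neighbours are the degree-$3$ vertices subdividing the two subdivided edges of $P_9$, to determine which two edges of $P_9$ are subdivided and the attachment of $v$ up to symmetry, and thereby identify $G$ as $C_{12}$ or $N'_{12}$.

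The hard part will be this final reconstruction: there are four neighbours of $v$ and two subdivisions of $P_9$ to locate, all constrained by the triangle $xyz$ and by the three $K_{3,3}$'s obtained by deleting a triangle vertex, and the bookkeeping needed to rule out every arrangement except the two Heawood family graphs is the bulk of the work — this is where planarity checks of the form ``$G$ minus two vertices is planar'' repeatedly eliminate stray configurations, exactly as in the shorter $(13,21)$ and $(14,21)$ proofs. The two smaller difficulties are the disposal of the degree sequence $(3^7,4^4,5)$ and the exclusion of the ``$K_{3,3}$ or prism'' configuration, each of which I expect to reduce to a single explicit choice of two vertices whose deletion planarizes $G$.
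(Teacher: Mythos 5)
Your set-up is fine---$\Delta(G)\le 5$, the elimination of the degree sequences $(3^9,5^3)$ and $(3^8,4^2,5^2)$, and the appeal to Lemma~\ref{lemtfreeH12} in the triangle-free case all match the paper---but the proposal founders on two of the three Heawood graphs it is supposed to produce. First, you discard the degree sequence $(3^7,4^4,5)$ by promising ``a short argument exhibiting two vertices whose deletion planarizes $G$.'' No such pair of vertices exists: the Heawood family graph $C_{12}$ has exactly this degree sequence and is N2A, so the claimed contradiction is spurious. The paper instead carries this case to completion: the degree-$5$ vertex $a$ has all four degree-$4$ vertices among its neighbours, $(G-a)^s$ is the Petersen graph, a degree-$4$ vertex $b$ then has exactly two degree-$3$ neighbours so $(G-b)^s=P_9$, and Lemma~\ref{lemapexpath} pins the attachments down to $C_{12}$. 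Your later remark that $C_{12}$ would reappear in the $(3^6,4^6)$ analysis cannot repair this, since $C_{12}$ does not have that degree sequence.

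Second, within $(3^6,4^6)$ you claim to manufacture a degree-$4$ vertex with exactly two degree-$3$ neighbours by ruling out the configuration in which every degree-$4$ vertex has exactly one degree-$3$ neighbour, again via a two-vertex planarization. But $N'_{12}$ realizes precisely that configuration---the paper proves that in the triangle case every degree-$4$ vertex has at most one degree-$3$ neighbour and every degree-$3$ vertex has at least one degree-$4$ neighbour, giving a perfect matching between the two classes---and $N'_{12}$ is N2A, so again no such pair of vertices exists. Worse, the reconstruction you then propose, from $P_9$ with two subdivided edges, is exactly the configuration the paper shows is incompatible with the presence of a triangle, so it would produce nothing and $N'_{12}$ would be missed entirely. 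The paper's route in the triangle case is different: it establishes the one-to-one correspondence, locates a triangle of degree-$4$ vertices, deletes a degree-$4$ vertex not on that triangle to get a $(3^8,4^2)$ NA graph identified via Lemma~\ref{lemda4} and Figure~\ref{figda4wtri}, and then places the deleted vertex to obtain $N'_{12}$. As written, your outline would ``prove'' that $C_{12}$ and $N'_{12}$ are $2$--apex, which is false; both eliminations must be replaced by reconstructions.
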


\begin{proof}
We assume again that $G$ is MMN2A and that $G$ is a $(12,21)$ graph. We 
can assume the maximum degree $\Delta(G)$ is at most five.  For a vertex $a$ with $d(a) \geq 6$ in a $(12,21)$ graph with $\delta(G) \geq 3$
will have at least one neighbor of degree $3$. Then $(G-a)^s$ has at most 14 edges
and is apex, by Theorem~\ref{thmMMNA}. This implies $G-a$ is apex and $G$ is $2$--apex, a contradiction.

This leaves four possible degree sequences:
$(3^9,5^3)$, $(3^8,4^2,5^2)$, $(3^7,4^4,5)$, and $(3^6,4^6)$. 

Let $G$ have the degree sequence $(3^9,5^3)$ or $(3^8,4^2,5^2)$. Then any $a$ with $d(a) = 5$ has at least two neighbors of
degree $3$. This means $(G-a)^s$ simplifies to a graph with fewer than $15$ edges and so it is apex (Theorem~\ref{thmMMNA}), whence $G$ is $2$--apex, a contradiction.

\begin{figure}[ht]
\begin{center}
\includegraphics[scale=0.5]{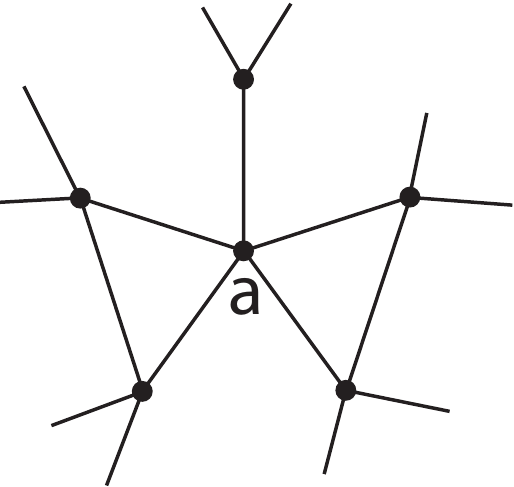}
\caption{Graph near the degree 5 vertex $a$.}\label{figC12start}
\end{center}
\end{figure}

We now focus our attention on the case where $G$ has the degree sequence $(3^7,4^4,5)$ and show that the only MMN2A graph with this degree sequence is $C_{12}$. 
(See \cite{HNTY} for the name. This is graph 12 in Figure~\ref{figHea}.)
Let $a$ denote the vertex of degree $5$. Note that $a$ has at most one neighbor of degree $3$, as otherwise
$\|(G-a)^s\| \leq 14$ meaning $G-a$ is apex (Theorem~\ref{thmMMNA}) and $G$ is $2$--apex.
Hence, the neighbors of $a$ are all the vertices of degree $4$ and one vertex of degree $3$. Moreover, each vertex of degree $4$ has at most $2$ neighbors
of degree $3$. This is illustrated in Figure~\ref{figC12start} . This implies that $(G-a)^s$ is a NA $3$-regular graph with $15$ edges, i.e., the Petersen graph (see Figure~\ref{figda3}). 
Since the Petersen graph has no triangles or $4$-cycles, we see that $G-a$ has no four cycles. This implies that the vertices of degree $4$
do not form a triangle or $4$-cycle in $G$. This justifies the specifics of Figure~\ref{figC12start}.

Let $b\in V(G)$ denote one of the vertices of degree $4$. From the above paragraph, we argued that $b$ must have exactly two neighbors of degree $3$, hence $(G-b)^s$ is a $(9,15)$ graph with degree sequence $(3^6,4^3)$.
This implies that $(G-b)^s$ is the Petersen family graph $P_9$ illustrated in Figure~\ref{figP9} (the unique Petersen family graph on nine vertices).
In $G-b$, vertex $a$ has degree 4 and without loss of generality is vertex $y$ in the figure. We have deduced
that $b$ is adjacent to $a$ as well as either $w_2$ or $v_2$, say $v_2$. Note that $b$ is not near the edge $xz$. In order for $G-a$ to be NA, by Lemma~\ref{lemapexpath}, $b$ is near the edges $v_1x$ and $v_3z$. Adding both $a$ and $b$ back in shows that this graph is $C_{12}$.

Now let $G$ have the degree sequence $(3^6,4^6)$. We will show $G$ is either $H_{12}$ or else $N'_{12}$.
(See \cite{HNTY} for these names. There are graphs 12 and 19 respectively in Figure~\ref{figHea}.)
 By Lemma~\ref{lemtfreeH12}, the only triangle free MMN2A graph with degree sequence $(3^6,4^6)$
is $H_{12}$, so we will assume that $G$ has a triangle and show that this implies it is $N'_{12}$. By Theorem~\ref{thmMMNA}, each degree four vertex in $G$ can have at most two neighbors of degree $3$. Notice that in $N'_{12}$, each degree $4$ vertex has exactly one neighbor of degree $3$ and vice versa.
We argue that $G$  must also share this property in order to be MMN2A.

\begin{figure}[ht]
\begin{center}
\includegraphics[scale=0.5]{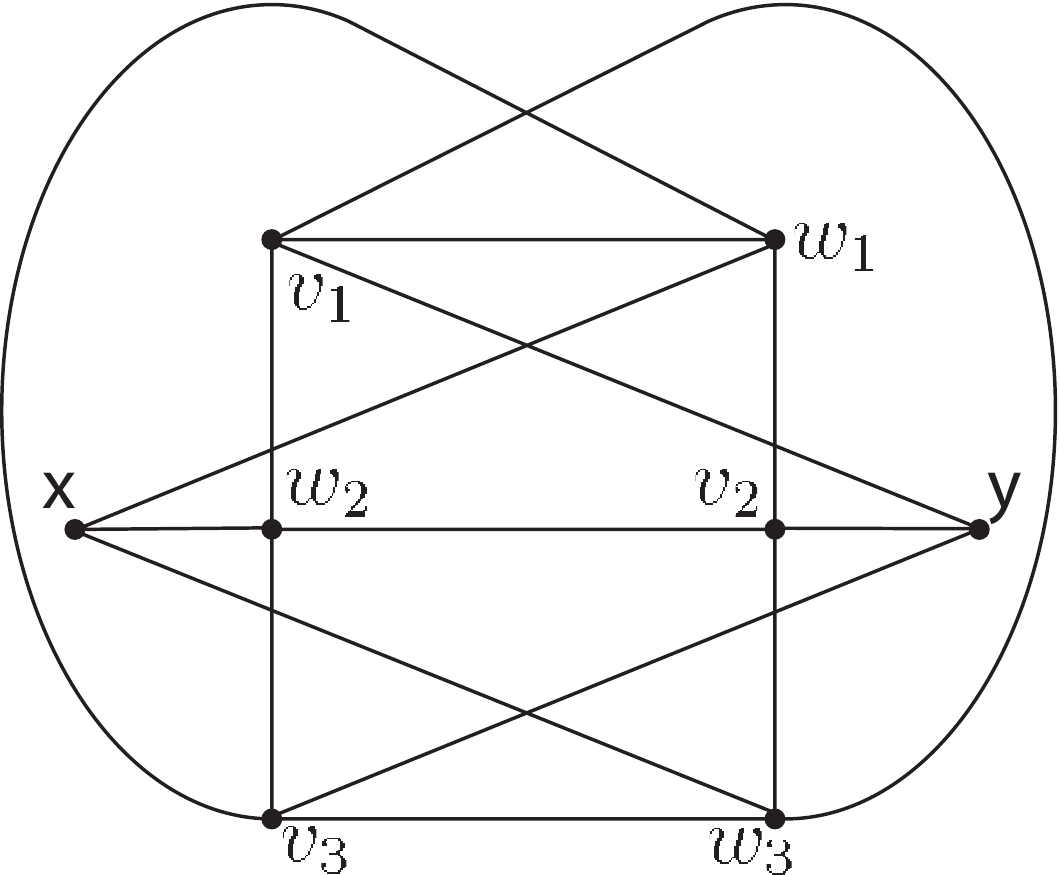}
\caption{The Petersen family graph $K_{4,4}-e$.}\label{figK44me}
\end{center}
\end{figure}

First, assume there is an $a\in V(G)$ such that $a$ has degree $3$ and three degree $3$ neighbors. Hence $G^* = (G-a)^s$ has 
degree sequence $(4^6,3^2)$ and is an $(8,15)$ graph. Since $G$ being
MMN2A implies that $G^*$ is NA, by Theorem~\ref{thmMMNA} it is in the Petersen family. By the degree sequence $(4^6,3^2)$, we can identify $G^*$ as $K_{4,4}-e$ drawn in Figure~\ref{figK44me}.  Since $G^*$ has no triangles, the triangle 
of $G$ is formed in reattaching $a$. Hence there is at least one edge in $G^*$ that
is subdivided twice in returning to $G-a$. Because of the symmetry of $G^*$,
we may assume without loss of generality that these subdivisions are on the edges $v_1w_1$ or $yv_1$. In the first case $G-v_1,w_1$ is planar and the second splits into two cases: either the other subdivsion from $G^*$ to $G-a$
occurs on an edge incident to $x$ in $G^*$ or it does not. In the case where it does not, then $G-v_i,w_j$ is planar, where $v_i$ and $w_j$ are the vertices in $G^*$ 
between which the subdivision occurs or $v_1 w_1$ if it's on an edge incident to $y$. In the other case, $G-x,v_1$ is planar
since it is essentially the same as the planar graph $G^*-x,v_1$ with an extra path from $y$ to a $w_i$ . So, in an MMN2A graph, every degree 3 vertex has at least one degree 4 neighbor.

Now suppose $a\in V(G)$ is a degree 4 vertex with exactly two neighbors of degree $3$. Then $G^* = (G-a)^s$ has degree sequence $(4^3,3^6)$. Since $G^*$ must be NA, by Theorem~\ref{thmMMNA} it is 
in the Petersen family and hence is the graph $P_9$ shown in Figure~\ref{figP9}. In the following, we use the labeling of that figure.

When we remove $x$, $y$, or $z$ separately from $G^*$ each induced subgraph shows us (by Lemma~\ref{lemapexpath}) that $a$ must have paths to $x$, $y$ and $z$ in $G$ that do not include any of their
neighbors in $G^*$. As these three vertices already have degree 4, the neighborhood of $a$ includes vertices adjacent to $x$, $y$, $z$ created by edge subdivisions.

Since there are only two edge subdivisions from $G^*$ to $G-a$, this implies that one has to be on the $xyz$ triangle. By the symmetry of $G^*$ we can assume
without loss of generality that $xy$ is subdivided. The other subdivision is on an edge incident to $z$ in $G^*$. Since we assume that $G$ contains a triangle, $a$ must be part of that triangle. 
Observe that $(G^*-y)^s = K_{3,3}$. By Lemma~\ref{lemapexpath}, $a$ must have paths
to the vertices $v_1$ $v_3$, $w_1$, $w_3$, $x$, and $z$ in $G-y$ that exclude the others from that list. Now, $a$ is adjacent to exactly two vertices in $G^*-y$ (as the two other neighbors appear only after additional edge subdivisions)  and since we have already established that $a$ is near both
$x$ and $z$ and possibly $v_3$ or $w_3$,
the remaining neighbors of $a$  are either $w_2$ and $v_2$, $v_1$ and $v_2$, or $w_1$ and $w_2$. Recalling that $a$ is not actually adjacent to $x$, just simply near it by way of a subdivison of $xy$ in $G^*$, and since
$G$ must have a triangle, none of these cases can be $G$.

\begin{figure}[ht]
\begin{center}
\includegraphics[scale=0.5]{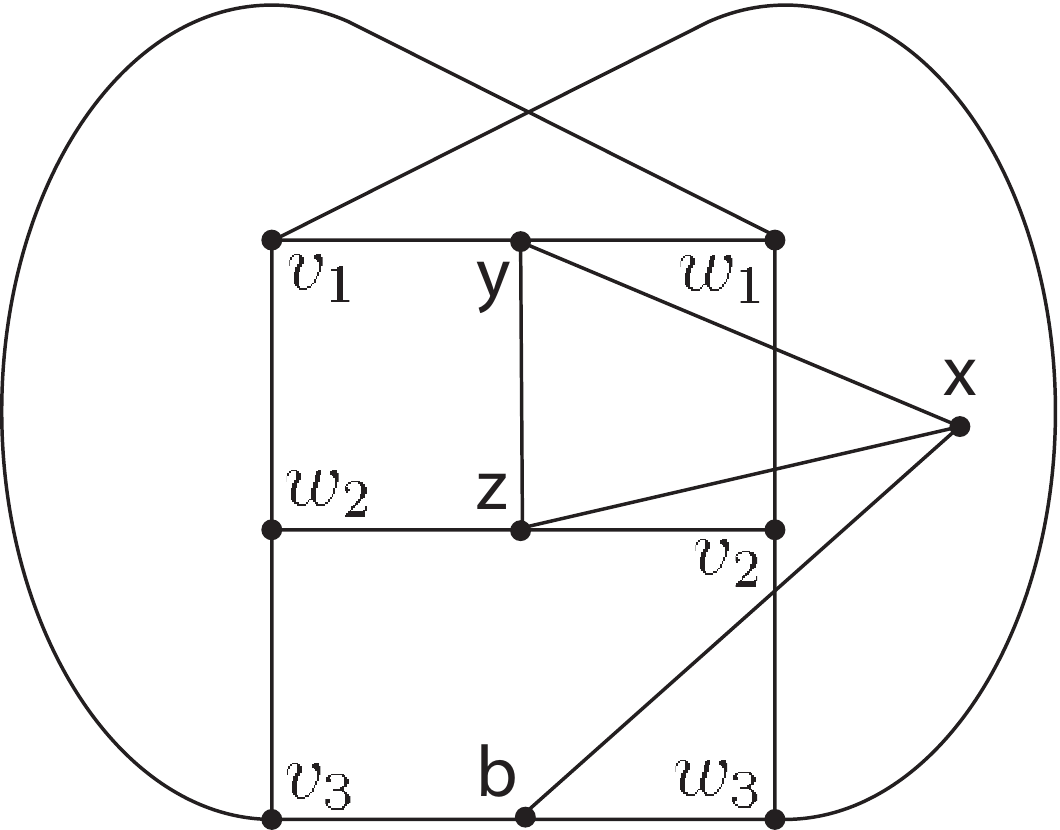}
\caption{Graph after removing a degree $4$ vertex leaving a triangle.}\label{figda4wtri}
\end{center}
\end{figure}

To summarize, we established that if $G$ is MMN2A with degree sequence $(3^6,4^6)$ and contains a triangle, then each vertex of degree $4$ has at most one neighbor of degree $3$ and each vertex of degree $3$ has at least 
one neighbor of degree $4$. Hence, there is a one to one correspondence between the degree $4$ vertices and the degree $3$ vertices by the relation of being neighbors in $G$. Note that none of the degree $3$ vertices can be
part of a triangle in $G$, otherwise, it would either be adjacent to at least two degree 4 vertices or else there is a degree $4$ vertex with two neighbors of degree $3$. Thus, we can assume there is a triangle of
vertices of degree $4$ in $G$. Choose some vertex of degree $4$ not on this triangle, call it $a$. Then $G^* = (G-a)^s$ has degree sequence $(3^8, 4^2)$ and contains a triangle.   
We claim that $G^*$ is the graph illustrated in Figure~\ref{figda4wtri}. Note that 
the two degree $4$ vertices in $G^*$ are adjacent. So,  
if we delete one of them, denote it $b$, then $(G^*-b)^s$ has nine edges and 
must be non-planar since $G^*$ is NA. 
Thus $(G^*-b)^s = K_{3,3}$ and, using Lemma~\ref{lemda4}, and that $G^*$ has a triangle and degree sequence $(3^8, 4^2)$, we deduce  $G^*$ is as shown in Figure~\ref{figda4wtri}. 

Now that we have established what $G^*$ looks like (Figure~\ref{figda4wtri}), determining where $a$ goes is easy. For starters, since both $y$ and $z$ are adjacent to $x$, then $x$ cannot have degree $3$ due to the one to one correspondence between 
vertices of degree 3 and 4. So $a$ is adjacent to $x$.
Either $a$ is adjacent to $v_1$ or $w_1$ since $y$ is adjacent to only one vertex of degree $3$, say $w_1$. Then, for the same reason $x$ and $a$ were adjacent, $a$ and $v_2$ are adjacent. Since $G-z$ is NA, by Lemma~\ref{lemapexpath},
$a$ is near $w_2v_3$ or $v_1w_2$. Similarly, $G-y$ is NA and Lemma~\ref{lemapexpath} shows $a$ is near $v_1w_2$ or $v_1w_3$. So $a$ is near $v_1w_2$.
This graph is $N'_{12}$. Therefore, the only graph MMN2A graph with degree sequence $(3^6,4^6)$ that contains a triangle is $N'_{12}$.
\end{proof}

\section{11 vertex graphs}

In this section we prove that an $(11,21)$ MMN2A graph is in the Heawood family. 
We begin with five lemmas, one each for  the Heawood family graphs of this order: $E_{11}$, $C_{11}$, $H_{11}$, $N'_{11}$, and $N_{11}$. (See \cite{HNTY} for the names. These correspond to graphs  8, 10, 11, 16, and 17 respectively in Figure~\ref{figHea}.)

\begin{lemma}  Let $G$ be an $(11,21)$ MMN2A graph with degree sequence $(3^4,4^6,6)$. Then $G$ is $C_{11}$.
\end{lemma}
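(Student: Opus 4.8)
The plan is to follow the template already established for the 12- and 13-vertex cases: locate a high-degree vertex, delete it, recognize the simplification as a small Petersen-family graph via Theorem~\ref{thmMMNA}, and then use Lemma~\ref{lemapexpath} repeatedly to pin down where the deleted vertex must reattach. Here $\Delta(G) = 6$, so let $a$ be the unique degree $6$ vertex. Since $\delta(G) \geq 3$ and $G$ has only $4$ vertices of degree $3$, the vertex $a$ has at most two neighbors of degree $3$; if it had more than two, $\|(G-a)^s\| \leq 14$ and $G-a$ would be apex by Theorem~\ref{thmMMNA}, contradicting that $G$ is N2A. Hence $a$ has at most two degree-$3$ neighbors, so $(G-a)^s$ is a non-planar graph on at most $15$ edges with few low-degree vertices; together with the degree sequence $(3^4,4^6,6)$ of $G$ (so $G-a$ has sequence derived by dropping $a$ and its six incident edges) this should force $(G-a)^s$ to be a specific Petersen family graph — most likely $P_9$ or $K_{4,4}-e$, identified by its residual degree sequence via Theorem~\ref{thmMMNA}.

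Once $(G-a)^s$ is identified, I would analyze the edge subdivisions that recover $G-a$ from $(G-a)^s$: the number of subdivisions equals $\|G-a\| - \|(G-a)^s\|$, and the constraint that no degree-$4$ vertex of $G$ has three degree-$3$ neighbors (again by Theorem~\ref{thmMMNA}) restricts which edges can be subdivided. Then, for each of several branch vertices $w$ of $(G-a)^s$, I would examine $(G-a,w)^s$, which will be $K_{3,3}$ or $K_5$, and apply Lemma~\ref{lemapexpath} to conclude that $a$ must be near every branch vertex of $G-a,w$; combining these "nearness" conditions across different choices of $w$ should leave only one way (up to isomorphism) to attach the six edges at $a$. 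As in the earlier sections, the alternative attachments will each be ruled out by exhibiting a two-vertex deletion $G-c,d$ that is planar, contradicting that $G$ is N2A. The surviving configuration should be exactly $C_{11}$ (graph 8 in Figure~\ref{figHea}).

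The main obstacle I anticipate is bookkeeping: with six edges emanating from $a$ and potentially two or three subdivisions to track, the case analysis on where the subdivided edges lie and which branch vertices $a$ is near could branch widely. The key simplification is that each Lemma~\ref{lemapexpath} application is quite rigid — a vertex must be near \emph{every} branch vertex of a $K_{3,3}$ or $K_5$ simplification — so intersecting two or three such conditions collapses the possibilities rapidly, and the "planar double deletion" argument disposes of the rest. A secondary subtlety is making sure the triangle structure among the degree-$4$ vertices (which the earlier sections exploited heavily) is correctly propagated through simplification; I would track it explicitly as was done in the $(13,21)$ and $(12,21)$ proofs, noting that triangles are preserved under the relevant simplifications provided $a$ is not near the triangle's edges.
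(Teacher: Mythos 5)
Your overall strategy matches the paper's (delete the top-degree vertex, identify the remainder via Theorem~\ref{thmMMNA}, then use Lemma~\ref{lemapexpath} on the various $K_{3,3}$ simplifications to pin down the reattachment and kill the alternatives with planar double deletions). But your key first step is wrong, and the error propagates. You claim the degree~$6$ vertex has \emph{at most two} degree~$3$ neighbors; in fact it has \emph{none}, and your own argument proves it: each degree~$3$ neighbor becomes a degree~$2$ vertex of $G-a$ and is smoothed away in simplification, so with $k\geq 1$ such neighbors $\|(G-a)^s\|\leq 15-k\leq 14$, which already contradicts $G-a$ being NA (an NA graph must contain an MMNA minor, and by Theorem~\ref{thmMMNA} every MMNA graph on at most $16$ edges has exactly $15$ edges). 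Hence $N(a)$ is precisely the six degree~$4$ vertices, $G-a$ is a cubic $(10,15)$ NA graph equal to its own simplification, and it must be the Petersen graph $P_{10}$ --- not $P_9$ or $K_{4,4}-e$ as you propose. Indeed $P_9$ and $K_{4,4}-e$ have $9$ and $8$ vertices respectively, so neither can be the simplification of a $15$-edge graph $G-a$ on $10$ vertices (any suppression of a degree~$2$ vertex would drop the edge count below $15$).

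This matters because the correct identification eliminates all of the subdivision bookkeeping you anticipate as the ``main obstacle'': there are no subdivisions to track, and the triangle-propagation issue you raise does not arise ($P_{10}$ is triangle-free and $G-a$ equals it on the nose). The paper's proof then proceeds exactly along the lines of your second paragraph: the one non-neighbor of $b$ (the degree~$6$ vertex, in the paper's labeling) is taken to be the apex vertex $a$ of Figure~\ref{figda3}, Lemma~\ref{lemapexpath} applied to $G-x$, $G-y$, $G-z$ forces $b$ to be adjacent to $x$, $y$, $z$ and to one of each pair $\{v_i,w_i\}$, and the mixed choices are excluded because $G-v_3,w_3$ or $G-v_1,w_1$ becomes planar, leaving only $C_{11}$. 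So the back half of your plan is sound, but as written the proposal sets up the case analysis on the wrong graph and would not go through without first correcting the count of degree~$3$ neighbors to zero.
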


\begin{proof}
Consider $b\in V(G)$ such that $\deg(b)=6$. Notice that for any $v\in N(b)$ we must have $\deg(v)=4$, otherwise (Theorem~\ref{thmMMNA}) $G-b$ is not NA. This implies that $G-b$ must be the Petersen graph (see Figure~\ref{figda3}). Without loss of generality,
we can assume that the vertex $a$ in Figure~\ref{figda3} is not a neighbor of $b$ in $G$. Since $(G-b,x)^s  = K_{3,3}$, then in $G-x$, by Lemma~\ref{lemapexpath}, $b$ must be adjacent to $z$ and $y$. Similarly, if we consider
$G-b,z$ we see that $b$ is adjacent to $x$. Consider again $G-x$. Since $b$ has degree $5$ in $G-x$, is adjacent to $y$ and $z$, and must have paths to $v_1$, $v_2$, $w_1$, and $w_2$ that do not go through 
$v_1$, $v_2$, $w_1$, $w_2$, $x$, or $y$, we see that $b$ is adjacent to either $v_3$ or $w_2$ or both. Similarly, considering $G-y$ and $G-z$, we see that $b$ is adjacent to either $v_2$ or $w_2$ and $v_1$ or $w_1$.
We claim that $b$ is adjacent to $v_1,v_2$, and $v_3$ or $w_1,w_2$, and $w_3$ in which case we have $C_{11}$. Otherwise, if $v_2\in N(b)$ and $w_1\in N(b)$ then $G-v_3,w_3$ is planar,
or if $v_2\in N(b)$ and $w_3\in N(b)$ then $G-w_1,v_1$ is planar. Similarly, if $w_2\in N(b)$ and $v_1\in N(b)$ then $G-v_3,w_3$ is planar, or if $w_2\in N(b)$ and $v_3\in N(b)$ then $G-w_1,v_1$ is planar. 
Therefore $G$ must be $C_{11}$.
\end{proof}

\begin{lemma} Let $G$ be an $(11,21)$ MMN2A graph with degree sequence $(3^5,4^3,5^3)$. Then $G$ is $E_{11}$.
\end{lemma}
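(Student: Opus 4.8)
The plan is to follow the same template used for the previous lemmas in this section: start from the degree sequence $(3^5,4^3,5^3)$ and peel off a well-chosen vertex so that the remaining simplified graph is forced, by Theorem~\ref{thmMMNA}, to be a specific Petersen family graph, then use Lemma~\ref{lemapexpath} repeatedly to pin down the attaching data and recognize $E_{11}$. First I would show that no degree $5$ vertex $a$ can have two or more neighbors of degree $3$: if it did, then $\|(G-a)^s\| \leq 14$, so $G-a$ would be apex by Theorem~\ref{thmMMNA} and $G$ would be $2$--apex, contradicting MMN2A. This forces each of the three degree $5$ vertices to have at most one degree $3$ neighbor, hence at least four neighbors among the remaining vertices of degree $4$ or $5$; a counting argument on the six degree-$\geq 4$ vertices should then reveal the local structure around the three degree $5$ vertices (e.g., that they are mutually adjacent, or form a prescribed configuration with the degree $4$ vertices).

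Next I would pick a vertex $a$ of degree $3$ all of whose neighbors have degree $3$ — the count of five degree $3$ vertices versus the restricted adjacencies of the high-degree vertices should guarantee one exists — so that $G^* = (G-a)^s$ has degree sequence $(3^2,4^3,5^3)$ if no simplification collisions occur, or more likely I would instead remove one of the degree $5$ vertices. Removing a degree $5$ vertex $a$ with exactly one (or zero) degree $3$ neighbors gives $\|(G-a)^s\| \geq 15$, and since $G^*$ must be NA with at most $16$ edges, Theorem~\ref{thmMMNA} identifies it as a Petersen family graph of the right size — presumably $P_8$ (the $(8,15)$ graph) or $K_{4,4}-e$, depending on the exact count of subdivisions. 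I would then track which edges of $G^*$ get subdivided in passing back to $G-a$, using that the degree $3$ vertices of $G^*$ and the subdivision points must carry the extra degree, and that each degree $5$ vertex of $G$ tolerates at most one degree $3$ neighbor.

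Then, as in the preceding proofs, I would delete branch vertices of $G^*$ one at a time; each deletion yields a graph simplifying to $K_{3,3}$ or $K_5$, so Lemma~\ref{lemapexpath} (via Lemma~\ref{lemapathequiv}) says $a$ — or rather the reattached vertex — must be near every branch vertex, which forces its neighbors to lie on specific subdivided edges. Running through the two or three symmetry classes of choices, all but one will produce a vertex pair whose deletion leaves a planar graph (contradicting $G$ not $2$--apex), and the surviving configuration, after adding back all deleted vertices, is exactly $E_{11}$ (graph 8 in Figure~\ref{figHea}).

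The main obstacle I expect is the bookkeeping in the middle step: with three degree $5$ vertices and three degree $4$ vertices, there are several a priori possibilities for how the high-degree vertices are interconnected and for how many edges survive simplification after deleting a degree $5$ vertex, so the identification of $G^*$ as a single named Petersen family graph may require splitting into subcases (e.g., $G-a$ simplifying to $P_8$ versus $K_{4,4}-e$ versus $P_9$). Keeping the triangle/girth information straight — which configurations of the degree $4$ and $5$ vertices are compatible with $G^*$ being triangle-free or not — and ruling out the planar-after-two-deletions cases cleanly will be the delicate part, but it is entirely analogous to the $C_{11}$ and $C_{12}$ arguments already carried out.
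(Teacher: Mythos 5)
Your skeleton is the same as the paper's (delete a degree~$5$ vertex, identify the simplification of what remains via Theorem~\ref{thmMMNA}, then use Lemma~\ref{lemapexpath} to force the attachments), but two steps that you leave as ``a counting argument should reveal\dots'' or hedge with ``presumably'' are exactly where the content of the proof lives, and as written they do not go through. First, you never establish that some degree~$5$ vertex actually \emph{has} a degree~$3$ neighbor; your ``(or zero)'' branch is fatal, because if $a$ has no degree~$3$ neighbor then $(G-a)^s$ has $16$ edges, and Theorem~\ref{thmMMNA} then only guarantees a Petersen family \emph{minor}, not an identification of $(G-a)^s$ itself --- so the whole subsequent nearness analysis has nothing to attach to. The paper disposes of this case up front: if no degree~$5$ vertex had a degree~$3$ neighbor, each of the three degree~$5$ vertices would be adjacent to all five other high-degree vertices, and deleting two of the degree~$4$ vertices leaves a $K_4$ joined by a bridge to a graph of at most seven edges, which is planar, contradicting N2A. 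You need this (or an equivalent) argument before you can assume $\|(G-a)^s\| = 15$.

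Second, your identification of $(G-a)^s$ is wrong. With $a$ of degree~$5$ and exactly one degree~$3$ neighbor (more than one is excluded by your own first observation), $G-a$ is a $(10,16)$ graph with a single degree~$2$ vertex, so $(G-a)^s$ is a $(9,15)$ NA graph with degree sequence $(3^6,4^3)$; by Theorem~\ref{thmMMNA} it is $P_9$, the unique nine-vertex Petersen family graph. Your candidates $P_8$ and $K_{4,4}-e$ have eight vertices and the wrong degree sequences, and the endgame genuinely depends on the structure of $P_9$: the degree sequence of $G$ forces $a$ to be adjacent to exactly two of the triangle vertices $x,y,z$ (say $x$ and $y$), Lemma~\ref{lemapexpath} applied to $G-x$ and $G-y$ forces $a$ near an edge incident to $z$, the options $yz$ and $xz$ are killed by a planar two-vertex deletion, and then $G-y$ and $G-x$ force $v_2,v_1 \in N(a)$, giving $E_{11}$. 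So the route is right, but the two load-bearing steps need to be carried out as above rather than asserted.
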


\begin{proof}
We may
assume that $\exists a\in V(G)$ such that $\deg(a)=5$ and $\exists u\in N(a)$ such that $\deg(u)=3$. If not, then removing any two of the degree $4$ vertices results in a $K_4$ graph with a bridge to a graph of at most seven edges,
which is clearly planar. So we may assume that $G^* = (G-a)^s$ has degree sequence $(3^6,4^3)$. This means that $G^*$ is the Petersen family graph $P_9$ shown in Figure~\ref{figP9}. By the
degree sequence of the original $G$, we may assume, without loss of generality, that $a$ is adjacent to $x$ and $y$ (referring again to Figure~\ref{figP9}), and hence is not adjacent to $z$. Removing either $x$ or $y$, Lemma~\ref{lemapexpath}
shows us that $a$ is near an edge incident to $z$. If $a$ is near the edge $yz$ or $xz$, then $a$ is also adjacent to two more vertices in Figure~\ref{figP9}. Removing both of these results in a planar graph. Thus $a$ is near the edge $v_3z$ 
or the edge $w_3z$. By symmetry, we will assume $v_3z$.

Applying Lemma~\ref{lemapexpath} to $G-y$ shows that $a$ must be adjacent to $v_2$ and, similarly, considering $G-x$ shows us that $a$ must be adjacent to $v_1$. Reassembling $G$ gives $E_{11}$.
\end{proof}

\begin{lemma}
Let $G$ be an $(11,21)$ MMN2A graph with degree sequence $(3^4,4^5,5^2)$. Then $G$ is $H_{11}$.
\end{lemma}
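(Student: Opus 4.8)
The plan is to delete a suitably chosen vertex $a$ so that Theorem~\ref{thmMMNA} forces $(G-a)^s$ to be a specific Petersen family graph, and then to use Lemma~\ref{lemapexpath} repeatedly to pin down exactly how $a$ re-attaches. The general reductions come first. Since $G$ is MMN2A we may assume $\delta(G)\ge 3$, and for every $v\in V(G)$ the graph $G-v$ is NA (else $G$ is $2$-apex), so $(G-v)^s$ is NA; by Theorem~\ref{thmMMNA} this forces $\|(G-v)^s\|\ge 15$, so in particular any graph with at most $14$ edges is apex. Two consequences I will use constantly: an NA graph on $9$ vertices and $15$ edges must be the Petersen family graph $P_9$ of Figure~\ref{figP9}, and an NA graph on $7$ vertices and $15$ edges must be $P_7$ or $K_{3,3,1}$.

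Next come a few structural facts. If a degree-$5$ vertex $a$ had two or more degree-$3$ neighbours, then suppressing those degree-$2$ vertices of $G-a$ would leave at most $14$ edges, so $(G-a)^s$, and hence $G-a$, would be apex; thus every degree-$5$ vertex has at most one degree-$3$ neighbour. Likewise, no degree-$3$ vertex can have all three of its neighbours of degree $3$: deleting such a vertex and simplifying would leave a $(7,15)$ graph of degree sequence $(5^2,4^5)$, which is neither $P_7$ nor $K_{3,3,1}$. I then show the two degree-$5$ vertices $a,b$ are adjacent. If not, each of $a,b$ has at least four degree-$4$ neighbours, and since $G$ has only five degree-$4$ vertices they share at least three of them; deleting both $a$ and $b$ then leaves a graph with at least three degree-$2$ vertices, whose simplification has at most $6$ vertices and $8$ edges and so is planar, making $G$ $2$-apex, a contradiction.

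The heart of the proof is the re-attachment analysis, which splits on whether some degree-$5$ vertex has a degree-$3$ neighbour. In the main case, say $a$ has a degree-$3$ neighbour $u$; counting degrees shows $(G-a)^s$ is a $(9,15)$ NA graph, hence $P_9$, that $G-a$ is $P_9$ with the edge carrying $u$ subdivided, that $b$ is one of the degree-$4$ vertices $x,y,z$ forming the triangle of $P_9$, and that the remaining three neighbours of $a$ lie among the degree-$3$ vertices $v_i,w_i$ of $P_9$. Deleting each of $x$, $y$, $z$ in turn and using that $(P_9-x)^s=(P_9-y)^s=(P_9-z)^s=K_{3,3}$, Lemma~\ref{lemapexpath} forces $a$ to be near every branch vertex of each of these three copies of $K_{3,3}$; this constrains which edge $u$ lies on and which $v_i,w_i$ are neighbours of $a$. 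Ruling out each configuration except one, in every other case by exhibiting an explicit pair of vertices whose deletion planarizes $G$, leaves precisely $H_{11}$. In the remaining case, in which neither degree-$5$ vertex has a degree-$3$ neighbour, we have $N(a)=\{b\}\cup\{\text{four degree-}4\text{ vertices}\}$ and symmetrically for $b$; deleting $a$ and $b$ leaves a graph that simplifies to a $3$-regular graph on six vertices, namely $K_{3,3}$ or the triangular prism. The prism is planar and so excluded; in the $K_{3,3}$ case $G-a-b$ is a split $K_{3,3}$, each of $a,b$ is joined to the three subdivision vertices and one branch vertex, and the same nearness analysis via Lemma~\ref{lemapexpath} either reproduces $H_{11}$ or is impossible.

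The main obstacle is the bookkeeping in these re-attachment cases: one must enumerate all ways of placing the subdivision vertex $u$ and the branch-vertex neighbours of $a$ consistent with $a$ being near every branch vertex of each $K_{3,3}$ obtained by deleting $x$, $y$, or $z$, and then kill every arrangement other than $H_{11}$ by naming a planarizing pair of vertices. Exploiting the symmetry group of $P_9$ to collapse cases, and being careful that ``$a$ near $x$'' is forced through the single subdivided edge, is where the argument most needs care.
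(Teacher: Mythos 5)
Your proposal is correct in outline and follows essentially the same route as the paper: delete a degree-$5$ vertex with a degree-$3$ neighbour, identify $(G-a)^s$ as $P_9$ via Theorem~\ref{thmMMNA}, apply Lemma~\ref{lemapexpath} to $G-x$, $G-y$, $G-z$ to force the single subdivision onto $yz$ and the remaining neighbours onto $\{v_1,v_2,v_3\}$ or $\{w_1,w_2,w_3\}$, and dispose of the case with no degree-$3$ neighbour by showing $(G-a,b)^s=K_{3,3}$ and invoking Lemma~\ref{lemda4} (the paper derives a contradiction there rather than a second copy of $H_{11}$). The only caveat is that the decisive configuration-by-configuration elimination via planarizing pairs, which constitutes most of the paper's argument, is described but not carried out in your sketch.
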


\begin{proof}
Assume that $\exists a\in V(G)$ such that $\deg(a)=5$ and $\exists u\in N(a)$ such that $\deg(u)=3$. Then $G^* = (G-a)^s$ is a $(9,15)$ NA graph, hence the graph illustrated in Figure~\ref{figP9}, with degree sequence $(3^6,4^3)$.
Since $G$ has only two vertices of degree $5$, vertex $a$ is adjacent to at most one of $x$, $y$, and $z$ in Figure~\ref{figP9}. We will assume that it is $x$ and hence $y,z \notin N(a)$. By Lemma~\ref{lemapexpath}, $a$
must be near edges incident to both $y$ and $z$ (consider $G-z$ and $G-y$, respectively). However, as $a$ has a unique neighbor of degree $2$ in $G-a$, it is  near only one edge. Therefore, $a$ is near the edge $yz$. If $a$ is adjacent to $v_1$, $v_2$, and $v_3$ or 
$w_1$, $w_2$, and $w_3$ then $G$ is $H_{11}$.

We next verify that this must be the case. Note that there are exactly three vertices
in $N(a) \cup \{v_1, v_2, v_3, w_1, w_2, w_3 \}$. Let us first examine the intersection
with $\{v_2, v_3, w_2, w_3 \}$. Lemma~\ref{lemapexpath} applied to $G-z$ shows that $a$ has 
at least one neighbor in each of the pairs $\{v_2, w_3\}$, $\{v_3, w_2\}$, and $\{v_3, w_3 \}$. The same lemma with $G-x$ shows that $N(a) \cap \{v_2,v_3,w_2,w_3\}$ is not simply $\{v_3, w_3 \}$. We conclude that $a$ is adjacent to $w_2$ and $w_3$ or $v_2$ and $v_3$, and, by symmetry, we can assume $v_2$ and $v_3$.
The last neighbor of $a$ must be $v_1$, as otherwise $G-v_3,w_3$ or $G-v_2,w_2$ will be planar.

\begin{figure}[ht]
\begin{center}
\includegraphics[scale=0.5]{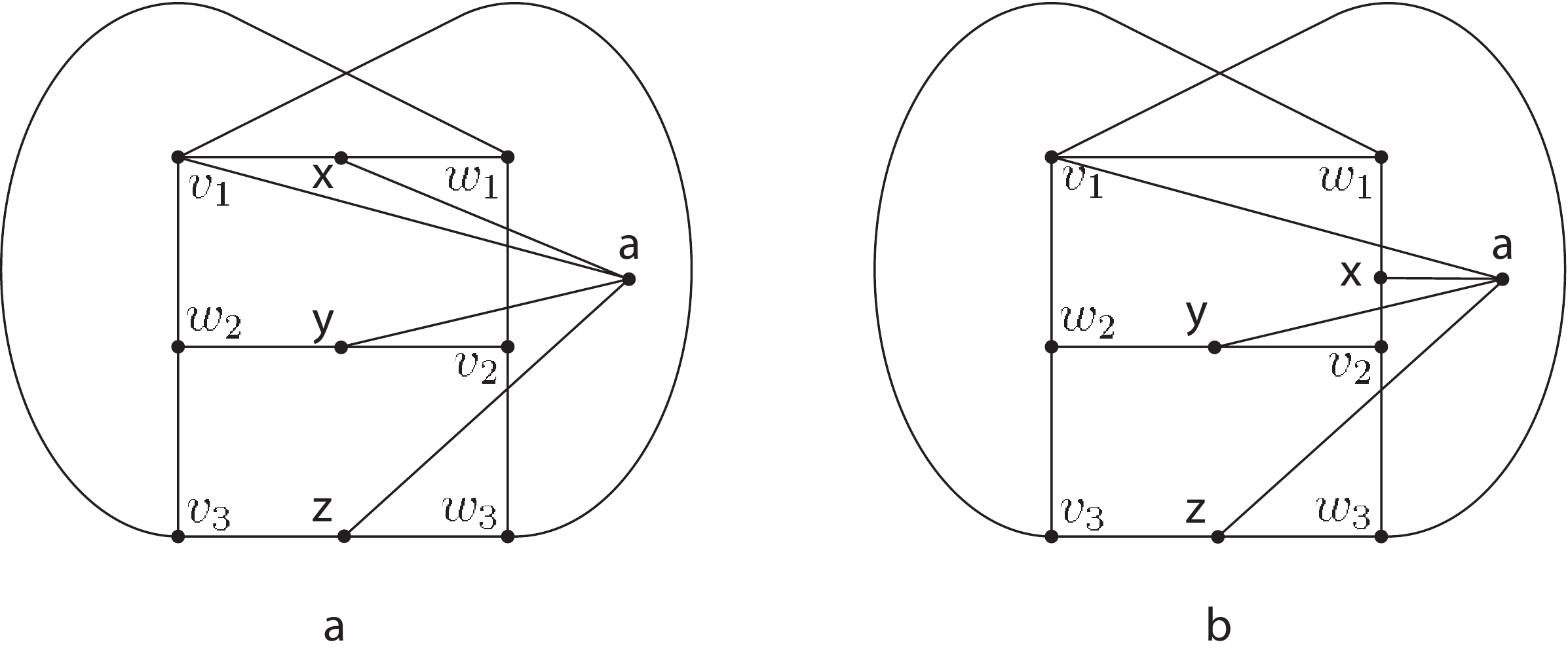}
\caption{Graphs with degree sequence $(3^8, 4^2)$ by adding a degree 4 vertex $a$ to a split $K_{3,3}$.}\label{figH11lem}
\end{center}
\end{figure}

Let $a$ and $b$ be the degree 5 vertices and suppose neither has a degree $3$ neighbor. If $a$ and $b$ are not adjacent, then $(\Gab)^s$ is 
a $(3^4)$ graph that is clearly 
planar. Further, $a$ and $b$ can have at most three common neighbors, as otherwise $(\Gab)^s$ has fewer than nine edges and is therefore planar. On the other hand, since there are only five degree 4 vertices, $a$ and $b$ must share at least three neighbors. This means $(\Gab)^s = K_{3,3}$.
By Lemma~\ref{lemda4}, $G-b$ must be as in Figure~\ref{figH11lem}a or b.
By our assumption, $b$ is adjacent to $a$, $x$, $y,$ and $z$, with one other neighbor from the set $\{w_1,w_2,w_3,v_2,v_3\}$. In the case where $G-b$ looks like Figure~\ref{figH11lem}a we see that $G-v_1,w_1$ is planar.
For the case of graph b in the figure, observe that $G-v_1,x$ is planar. Hence if $a$ and $b$ have no degree 3 neighbors, then $G$ is $2$--apex. Therefore $G$ must be $H_{11}$.
\end{proof}

\begin{lemma}
Let $G$ be an $(11,21)$ MMN2A graph with degree sequence $(3^3,4^7,5)$. Then $G$ is $N'_{11}$.
\end{lemma}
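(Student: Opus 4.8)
The plan is to follow the template of the four preceding lemmas in this section: delete one well-chosen vertex, recognise its simplification as a Petersen family graph by Theorem~\ref{thmMMNA}, and then recover $G$ from Lemma~\ref{lemapexpath}, discarding the wrong configurations by exhibiting a planar two-vertex deletion.

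I would begin with the unique degree $5$ vertex $a$. An edge count together with Theorem~\ref{thmMMNA} shows $a$ has at most one degree $3$ neighbour (two would force $\|(G-a)^s\|\le 14$, so $G-a$ would be apex). I claim in fact $a$ has \emph{no} degree $3$ neighbour. Otherwise, calling it $u$, the graph $(G-a)^s$ is non-planar with at most $15$ edges, hence exactly $P_9$ (Theorem~\ref{thmMMNA}); then $u$ is the one subdivision vertex of $P_9$ inside $G-a$, and $N(a)$ consists of $u$ together with four of the six degree $3$ branch vertices of $P_9$ — the triangle vertices $x,y,z$ of $P_9$ being excluded since $a$ is the only degree $5$ vertex. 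But $(G-a,x)^s$, $(G-a,y)^s$ and $(G-a,z)^s$ are each $K_{3,3}$, so applying Lemma~\ref{lemapexpath} to $G-x$, $G-y$ and $G-z$ would force the single subdivided edge to be incident to two distinct triangle vertices in each case, i.e.\ to be simultaneously $yz$, $xz$ and $xy$ — impossible. Hence every neighbour of $a$ has degree $4$. Along the way I would also record that each degree $4$ vertex has at most two degree $3$ neighbours, and that no degree $3$ vertex has three degree $3$ neighbours (there are only three degree $3$ vertices in all).

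Next I would locate a degree $4$ vertex $b$ whose deletion is exploitable. Counting the nine edge-ends at the three degree $3$ vertices — none of which meets $a$ — a pigeonhole argument yields a degree $4$ vertex $b$ with exactly two degree $3$ neighbours, unless every degree $4$ vertex meets at most one degree $3$ vertex, a residual ``clustered'' configuration (the degree $3$ vertices then span an induced edge) that I would treat by a direct argument exhibiting a planar two-vertex deletion. In the main case, $(G-b)^s$ is a non-planar $8$-vertex graph with $15$ edges, hence $K_{4,4}-e$; moreover if $b\not\sim a$ then $(G-b)^s$ would have degree sequence $(5,4^4,3^3)$, which is not a Petersen family graph, so $b\sim a$. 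Therefore $G-b$ is $K_{4,4}-e$ with two subdivided edges (carrying the two degree $3$ neighbours of $b$), and $N(b)$ consists of $a$, these two subdivision vertices, and one branch vertex of $K_{4,4}-e$. I would then run Lemma~\ref{lemapexpath} (and Lemma~\ref{lemda4} where a subdivision sits on an edge at a deleted vertex) through the vertex deletions of $G-b$: the resulting nearness conditions, combined with the degree bounds above, should pin down both subdivided edges and the branch vertex uniquely, every alternative making some $G-p,q$ planar. Restoring $b$ then gives $N'_{11}$.

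The main obstacle is the bookkeeping of this finite case analysis rather than any single hard step. The two delicate points are: (i) ensuring in every branch that some \emph{single} vertex deletion simplifies to a graph identifiable via Theorem~\ref{thmMMNA} — with only three degree $3$ vertices the supply of suppressible vertices is thin, so one must choose the deletion that reaches $P_9$ or $K_{4,4}-e$ and handle the leftover clustered configuration by hand — and (ii) translating nearness in $(G-v)^s$ back to adjacency in $G$ while keeping straight which vertices of $G$ are branch vertices, which are subdivision vertices, and which model edges were subdivided. Once the correct Petersen family graph is identified, Lemma~\ref{lemapexpath} finishes the argument almost mechanically, exactly as in the previous four lemmas.
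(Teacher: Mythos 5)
Your first step---showing the degree $5$ vertex has no degree $3$ neighbour by identifying $(G-a)^s$ with $P_9$ and deriving the contradiction that the unique subdivision vertex would have to lie on all three edges $xy$, $yz$, $xz$ of the triangle---is exactly the paper's argument, and it is correct. After that point, however, your route diverges from the paper's and has a genuine gap. You pivot to finding a degree $4$ vertex $b$ with exactly two degree $3$ neighbours so that $(G-b)^s=K_{4,4}-e$, but the existence of such a $b$ is not guaranteed: with only three degree $3$ vertices contributing nine edge-ends, none of which meet $a$, the pigeonhole argument only works when there are no edges among the degree $3$ vertices; otherwise you land in your ``clustered'' case (each degree $4$ vertex meeting at most one degree $3$ vertex, with at least one edge inside the degree $3$ class), which you defer to ``a direct argument exhibiting a planar two-vertex deletion'' without producing one. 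That case is not obviously vacuous and not obviously easy---it admits several subconfigurations ($e_{33}=1,2,3$)---so as written the proof does not close. The final reconstruction from the subdivided $K_{4,4}-e$ is likewise only asserted to ``pin down'' $G$, and it carries extra bookkeeping (locating $a$ among the branch vertices, plus both subdivisions) that you have not carried out.

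The paper avoids all of this by deleting the degree $5$ vertex itself. Once every neighbour of the degree $5$ vertex $b$ is known to have degree $4$, the graph $G-b$ already has minimum degree $3$ with degree sequence $(3^8,4^2)$ and $16$ edges---no simplification, hence no hypotheses on how the degree $3$ vertices of $G$ are distributed. One then checks that the two degree $4$ vertices of $G-b$ must be adjacent (else deleting one simplifies to eight edges), deletes one of them, say $a$, to get $(\Gab)^s=K_{3,3}$, and invokes Lemma~\ref{lemda4} to reduce $G-b$ to the two configurations of Figure~\ref{figH11lem}; nearness and planarity checks then force $N(b)$ and yield $N'_{11}$. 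If you want to salvage your version, you must either prove the clustered configuration cannot occur in an MMN2A graph with this degree sequence or analyze it separately; the cleaner fix is to switch to the paper's decomposition, which makes the case split unnecessary.
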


\begin{proof}
Let us begin by assuming that the degree $5$ vertex, call it $b$, is adjacent to some vertex of degree $3$. Then $G^* = (G-b)^s$ has degree sequence $(3^6,4^3)$
and is therefore the $P_9$ graph of Figure~\ref{figP9}. 
Note that $b$ is not adjacent to $x$, $y$, or $z$, since going from $G$ to $G^*$ did not change their degree. 
However, observing the graphs we obtain when removing $x$, $y,$ or $z$, by Lemma~\ref{lemapexpath} we see that $b$ needs a path to all of them that does not utilize any of their neighbors in $G^*$. This is clearly impossible
since there is at most one subdivision from $G^*$ to $G-b$. Hence $\forall v\in N(b)$ we have $\deg(v)=4$.

Then $G-b$ must have the degree sequence $(3^8,4^2)$. If the vertices of degree $4$ in $G-b$ are not adjacent, then if $v$ is one of those, $(G-b,v)^s$ has eight edges and is therefore planar, which is a contradiction.
So choose $a\in V(G-b)$ such that $\deg(a)=4$. Then if $G$ is N2A, $(\Gab)^s$ is $K_{3,3}$. When we add $a$ back in, by Lemma~\ref{lemda4}, there are two cases, shown in Figure~\ref{figH11lem}.
However, for Figure~\ref{figH11lem}b, we notice that $b$ is not adjacent to $v_1$
since it can only be adjacent to vertices of degree $3$ in $G-b$. This means that it is not near $v_1$ which is required by Lemma~\ref{lemapexpath}. So $G-b$ is isomorphic to the graph illustrated in Figure~\ref{figH11lem}a. As above, since $b$ must be near $v_1$, it must be adjacent to $x$. Now, $G-v_1,w_1$ will be planar unless $N(b)$ includes either $\{v_2,v_3\}$ or $\{w_2,w_3\}$. We will argue that
it must be the latter. Suppose instead $\{x,v_2,v_3 \}$ is in $N(b)$ and $\{w_2,w_3\}$
is not. In particular, if $w_2 \notin N(b)$, then $G-v_3,w_3$ is planar, a contradiction. Similarly, if $w_3 \notin N(b)$, $G - v_2,w_2$ gives a contradiction. This shows that it is not possible that $\{w_2,w_3\} \not\subset N(b)$, and so we can assume
$\{w_2, w_3 \} \subset N(b)$. Now $G-v_2,w_2$ is planar unless $b$ is adjacent to $y$ and $G-v_3,w_3$ shows $z$ is adjacent to $b$ as well, which means $G$ is $N'_{11}$.
\end{proof}

\begin{lemma} Let $G$ be an $(11,21)$ MMN2A graph with degree sequence $(3^2,4^9)$. Then $G$ is $N_{11}$.
\end{lemma}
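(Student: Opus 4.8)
The plan is to follow the template of the earlier lemmas of this section; the new wrinkle is that the degree sequence $(3^2,4^9)$ forces $\Delta(G)=4$, so there is no vertex of degree $5$ or more to delete. Instead I would delete a degree $4$ vertex. Write $p,q$ for the two degree $3$ vertices. Since the only vertices of $G$ of degree less than $4$ are $p$ and $q$, each of $p,q$ has at least two degree $4$ neighbors; in particular some degree $4$ vertex is adjacent to a degree $3$ vertex, and (trivially) no degree $4$ vertex has three degree $3$ neighbors. The decisive case split is whether some degree $4$ vertex is adjacent to \emph{both} $p$ and $q$.

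\emph{Main case: some degree $4$ vertex $a$ is adjacent to both $p$ and $q$.} Then $G-a$ has size $17$ and exactly two vertices of degree $2$, so $(G-a)^s$ has eight vertices and at most $15$ edges; being NA (as $G$ is N2A) it has at least $15$ edges, hence exactly $15$, and by Theorem~\ref{thmMMNA} it is a Petersen family graph on eight vertices, namely $K_{4,4}-e$ or $P_8$. From here I would argue as in the earlier $(11,21)$ lemmas: using the symmetry of $K_{4,4}-e$ (respectively $P_8$), Lemma~\ref{lemapexpath} where it applies (i.e.\ where deleting a branch vertex of $(G-a)^s$ leaves a graph that simplifies to $K_{3,3}$ or $K_5$), and direct inspection of the double deletions $G-u,v$, I would pin down the two edge subdivisions carrying $(G-a)^s$ back to $G-a$, and the two remaining neighbors of $a$, up to a short list of cases, then discard every case in which some $G-u,v$ is planar. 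The unique survivor should be $N_{11}$.

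\emph{Degenerate case: no degree $4$ vertex is adjacent to both $p$ and $q$.} Here a single deletion cannot reduce the size far enough, so I would pass through a two-vertex deletion, as in the $(14,21)$ argument. If $p\not\sim q$, then $p$ and $q$ have six distinct degree $4$ neighbors and $G-p,q$ is a $(9,15)$ graph of degree sequence $(3^6,4^3)$; if it is non-apex it must be $P_9$, and I would then rebuild $G$ in two rounds --- re-adding $q$ to $P_9$ using Lemmas~\ref{lemda3} and~\ref{lemda4}, then re-adding $p$ using Lemma~\ref{lemapexpath} --- eliminating at each stage every configuration with a planar double deletion. If $G-p,q$ is apex, or in the sub-case $p\sim q$ (where a single deletion always leaves more than $16$ edges), I would instead delete $p$ together with a carefully chosen degree $4$ neighbor of $q$, descending to a subdivision of $K_{3,3}$ or $K_5$ so that Lemma~\ref{lemapexpath} applies, and reconstruct similarly; one should also be able to show directly that $p\sim q$ with no common degree $4$ neighbor is incompatible with $G$ being N2A. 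Every branch should end at $G=N_{11}$.

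The main obstacle, compared with the preceding lemmas, is precisely this lack of a high-degree vertex: the reconstruction of $G$ is now two levels deep, and the case analysis grows accordingly --- which of $K_{4,4}-e$, $P_8$, $P_9$, $K_{3,3}$, $K_5$ occurs; where the one or two subdivisions lie; and whether $p$ and $q$ are adjacent or share a neighbor. Carrying the ``near which branch vertex, near which edge'' bookkeeping consistently through two successive vertex re-additions, and certifying that every configuration other than $N_{11}$ admits an explicit planar $G-u,v$, is where the real work lies.
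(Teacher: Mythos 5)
Your opening move --- splitting on whether some degree~$4$ vertex is adjacent to both degree~$3$ vertices $p,q$ --- is exactly the paper's, and your analysis of that first case is set up correctly: $(G-a)^s$ is indeed an $(8,15)$ NA graph, hence a Petersen family graph of order eight (the degree sequence $(3^2,4^6)$ then forces $K_{4,4}-e$, ruling out $P_8$). But your prediction for this case is wrong: the paper shows it is \emph{vacuous}. With $a$ near two edges of $K_{4,4}-e$ and adjacent to its two degree~$3$ vertices, a suitable double deletion $G-v_1,v_2$ is always planar, contradicting N2A. So $N_{11}$ does not arise here at all --- in $N_{11}$ the two degree~$3$ vertices have no common neighbor. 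This is not fatal to your method (an honest case check would discover the contradiction), but it signals that the plan was not tested against the target graph.

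The genuine gap is in your second case, which is where all of $N_{11}$ actually lives. Your main route there is to delete both $p$ and $q$ and identify $G-p,q$ with $P_9$; but $G$ being N2A only guarantees that $G-v$ is NA for each \emph{single} vertex $v$ and that each double deletion is \emph{non-planar} --- it does not make $G-p,q$ NA, and a non-planar $(9,15)$ graph of degree sequence $(3^6,4^3)$ need not be $P_9$ or anything close to it. You notice this, but your fallback (delete $p$ together with a neighbor of $q$ and hope the result is a subdivision of $K_{3,3}$ or $K_5$ so that Lemma~\ref{lemapexpath} applies) is not guaranteed either: the resulting graph has about $14$ edges and may simplify to $K_{3,3}+e$ or another non-planar graph to which that lemma does not apply. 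The paper avoids this entirely by staying with deletions whose non-apexness \emph{is} guaranteed: it deletes a degree~$4$ vertex $a$ with a degree~$3$ neighbor, so $(G-a)^s$ is a $(9,16)$ NA graph of degree sequence $(3^4,4^5)$; a parity/pigeonhole argument on the edges between the degree~$3$ and degree~$4$ vertices of $(G-a)^s$ then produces a second degree~$4$ vertex $b$ with exactly two degree~$3$ neighbors, and deleting $b$ reduces to a twice-subdivided non-planar $(6,10)$ graph, i.e.\ $K_{3,3}$ plus a (possibly doubled) edge. The two-level reconstruction from that $(6,10)$ graph --- tracking which subdivided edges $a$ and $b$ are near, forcing shared neighbors, and killing every non-$N_{11}$ configuration with an explicit planar double deletion or a forbidden ``degree~$4$ vertex with two degree~$3$ neighbors'' --- occupies the bulk of the paper's proof and is precisely the part your proposal defers as ``where the real work lies.'' As written, the proposal is an outline with a structural flaw in its main branch rather than a proof.
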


\begin{figure}[ht]
\begin{center}
\includegraphics[scale=0.5]{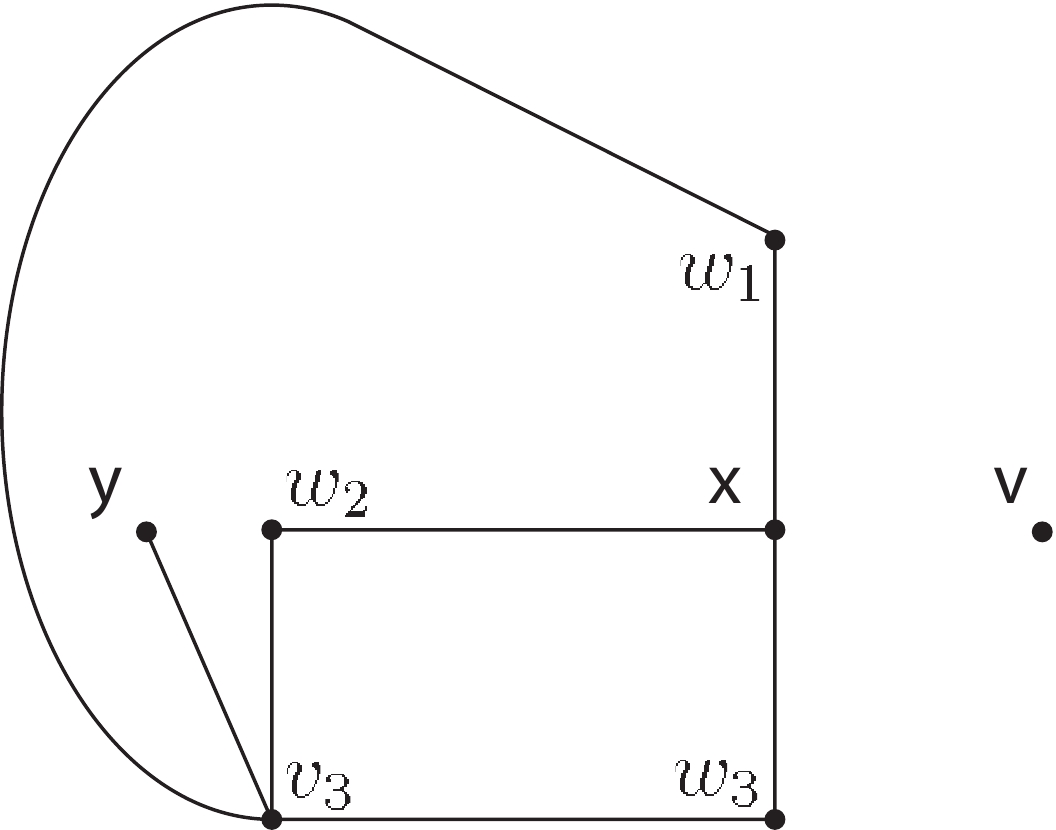}
\caption{Remove $v_1$ and $v_2$ from $K_{4,4}-e$.}\label{figN11lem}
\end{center}
\end{figure}

\begin{proof}
First assume that there exists a $v\in V(G)$ such that $\deg(v)=4$ and the two vertices of degree $3$ are neighbors of $v$. Then $(G-v)^s$ has degree sequence $(3^2,4^6)$ and is the Petersen family graph $K_{4,4}-e$
illustrated in Figure~\ref{figK44me}. Thus $G-v$ is a subdivision of $K_{4,4}-e$.
Note that in $G$, vertex $v$ is adjacent to both $x$ and $y$. The graph obtained from $K_{4,4} -e$ when we remove 
$v_1$ and $v_2$ is illustrated in Figure~\ref{figN11lem}. Since $v$ is adjacent to both $x$ and $y$ and the graph $G-v,v_1, v_2$ can be obtained from Figure~\ref{figN11lem} by only two subdivisions (the other neighbors of $v$), we see that $G-v_1,v_2$ is planar.

\begin{figure}[ht]
\begin{center}
\includegraphics[scale=0.5]{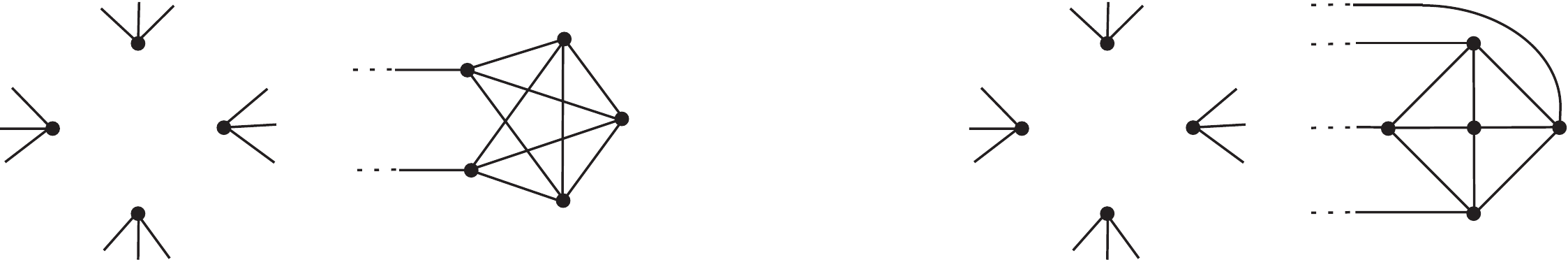}
\caption{There are two or four edges between $V_3$ and $V_4$.}\label{fig34part}
\end{center}
\end{figure}

We can now assume that the two degree $3$ vertices of $G$ have no common degree 4 neighbors.
Let $a$ be a degree 4 vertex that has a degree 3 neighbor. 
Then $G^* = (G-a)^s$ has degree sequence
$(3^4,4^5)$. Notice first that if $G^*$ has a degree 4 vertex $v$ that has three or more degree $3$ neighbors, then $(G^*-v)^s$ has at most $9$ edges and $5$ vertices and is planar. 
We claim that there is a degree 4 vertex in $G^*$, that has two neighbors of degree $3$. 
For suppose not and let $V_3$ denote the set of degree 3 vertices of $G^*$ and
$V_4$ those of degree 4. As the degree sums in the two parts are even, there are
an even number of edges between $V_3$ and $V_4$. If there were six or more, then, by pigeonhole, one of the degree 4 vertices would have two degree 3 neighbors, which is what we are trying to establish. If there were no edges in between, $G^* = K_4 \sqcup K_5$ is apex, a contradiction. So there are two or four edges between 
$V_3$ and $V_4$. (See Figure~\ref{fig34part}.)
In either case, removing a degree $4$ vertex that has a degree 3 neighbor will result in a planar graph.

\begin{figure}[ht]
\begin{center}
\includegraphics[scale=0.5]{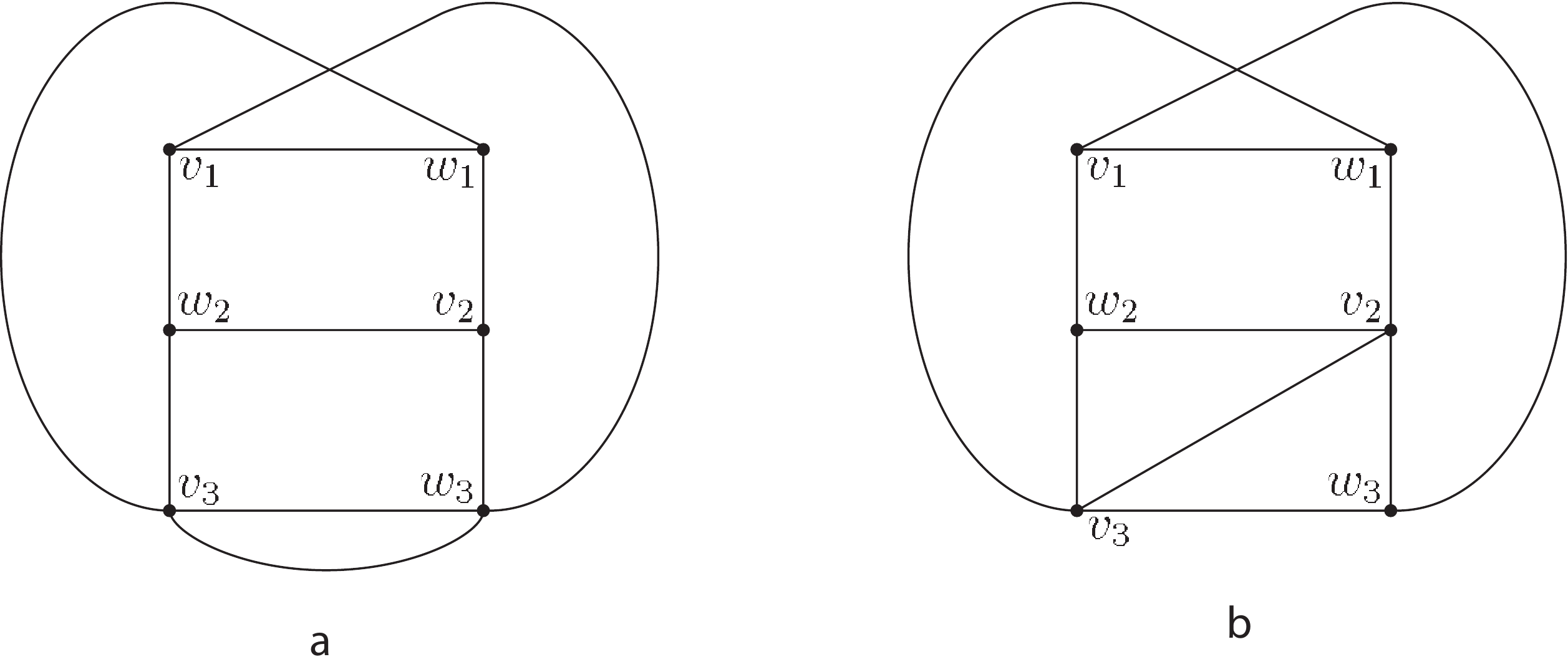}
\caption{Two non-planar $(6,10)$ graphs.}\label{figK33pe}
\end{center}
\end{figure}

So, let $b\in V(G^*)$ be a degree 4 vertex with two degree 3 neighbors. 
Moreover, $a$ and $b$ have a common neighbor, as otherwise $b$ has two degree $3$ neighbors in $G$.
Now, $G^* -b$ will be formed by subdividing two edges of a $(6,10)$ graph $G'$ having degree sequence $(3^4,4^2)$.
Since our assumption implies that $G'$ is non-planar, $G'$ is one of the two graphs obtained
by adding an edge to $K_{3,3}$ (see Figure~\ref{figK33pe}).

Assume that $G'$ is the multigraph $K_{3,3}+e$ shown in Figure~\ref{figK33pe}a. Since $G$ was a simple graph, there is at least one subdivision on one of the paired edges.
If $a$ and $b$ are not near the same edge in the paired edges, then removing from $G$ vertices $v_3$ and $w_3$ of $G'$ results in a planar graph, since the graph is essentially a subdivision of the $4$-cycle $v_1w_1v_2w_2$ along
with two more vertices that are not adjacent to one another.

Next, suppose $a$ and $b$ are adjacent to the same edge in the pair, but attach
to the edge at two different vertices formed by subdividing that edge twice.
By generalizing the argument of Lemma~\ref{lemapexpath}, we claim that
both $a$ and $b$ must have paths to each of the vertices in $G'$ independent of the other vertices of $G'$. For example, without loss of generality and referencing Figure~\ref{figK33pe}a, if no such
path from $a$ to $v_2$ exists, then $G-b,w_3$ must be planar. Indeed, place $a$ in the region of  $G'-w_3$ bounded by the cycle $v_1w_1v_3w_2$.
We can argue similarly for $b$.
Recall that $G-a,b$ is obtained from $G'$ by exactly three edge subdivisions. 
Also, when we add $b$ to $G'$, it is adjacent to two vertices formed by subdivision and two vertices of degree $3$. Using Lemma~\ref{lemda4}, we can assume $b$
is near $v_1w_1$ via a subdivision of that edge and also adjacent to $v_2$
and $w_2$ (by the symmetry of $G'$). Note that since $\Delta(G) = 4$, 
there is now no way to make paths from $a$ to $v_2$ and $w_2$ that avoid the other vertices of $G'$. 

We conclude that $a$ and $b$ attach at the same vertex of one of the paired edges of $G'$. Then as above, we can assume that $b$ is near the edge $v_1w_1$ and
adjacent to $v_2$ and $w_2$. Then those two vertices have degree $4$ and are
not adjacent to $a$. As there remains a single subdivision of $G'$, it must be on the
edge $v_2w_2$. So, $a$ is near that edge which
forces $a$ to be adjacent to $v_1$ and $w_1$. This graph is $N_{11}$.

Now assume that $G'$ is the simple graph illustrated in Figure~\ref{figK33pe}b. The graph $G'-v_3$, shows us that both $a$ and $b$ are near $w_1$, $w_2$, and $w_3$. Similarly, $G'-w_3$ shows us that they are near $v_3$
and $v_2$. Recall that $b$ is adjacent to two of the degree 3 vertices of $G'$ as well
as two vertices formed by subdividing edges of $G'$.

Suppose $b$ is adjacent to $v_1$ in $G-a$. Then $b$ is adjacent to one of the $w_i$ for $i\in \{1,2,3\}$, and by symmetry, we may assume $w_1$. Since $b$ is also near the other four vertices in $G'$, we may assume $b$'s other neighbors are 
vertices resulting from subdivisions of the edges $w_2v_2$ and $v_3w_3$. Since $a$ and $b$ share at least one neighbor, we may assume (without loss of generality) that $a$ is adjacent to the same vertex formed by subdividing
$w_3v_3$ of $G'$. 

There must be an additional subdivision of $G'$ giving a 
neighbor of $a$. Since $\Delta(G) = 4$, the remaining two neighbors of
$a$ are drawn from $\{w_2, w_3 \}$ and the vertex on $v_2w_2$ resulting from its subdivision. 
Suppose $a$ is adjacent to $w_2$ and $w_3$. As it must also be near $v_2$ and $w_1$, it is also adjacent to a vertex formed by a subdivision of the edge $v_2w_1$ in $G'$. However, in this case $v_2$ has
two neighbors of degree $3$, a possibility ruled out at the beginning of the proof.

So assume that $a$ shares two neighbors with $b$, the two vertices formed by subdividing $v_2w_2$ and $v_3w_3$, and is adjacent to exactly one of $w_2$ and $w_3$, say $w_3$. Now, $a$ must be near $w_1$ but if it is adjacent to
a vertex formed by the subdivision of $v_1w_1$ or $v_3w_1$, we again have the case of a degree $4$ vertex with two degree 3 neighbors  ($v_1$ and $v_3$ respectively). So it must be that $a$ is adjacent to a vertex resulting from subdivision of the edge
$w_1v_2$. In this case, let $x$ denote the common neighbor of $a$ and $b$ that is also a neighbor of $v_3$ and $w_3$. Then $G-x,w_3$ is planar. 
This shows that $b$ is not adjacent to $v_1$. A similar argument starting with adding $a$ instead of $b$ shows that $a$ is also not adjacent to $v_1$, at least in the
case where $a$ and $b$ share exactly one neighbor.

So we know that $b$ is not adjacent to $v_1$ in $G'$. Then without loss of generality it is adjacent to $w_2$ and $w_3$. So, $a$ is adjacent to $w_1$ or $v_1$. If $a$ is adjacent to $v_1$, then $a$ shares two neighbors with $b$. 
In other words, the vertices created by subdivisions in going
from $G'$ to $G-a,b$ that are neighbors of $b$ are also neighbors of $a$.
Since both $a$ and $b$ are near $w_1$, suppose they are adjacent to a vertex resulting from subdivision of the edge $v_1w_1$. Then since $a$ is near
$w_2,w_3,v_2,$ and $v_3$, we may assume $a$ is adjacent to vertices resulting from subdivisions of the edges $w_2v_2$ and $v_3w_3$ and that $b$ is adjacent to one of these. However, in either case $G$ has a degree $4$ vertex
with two degree $3$ neighbors ($v_3$ and $v_2$ respectively). 

So suppose instead that $a$ and $b$ are adjacent to a vertex produced by a subdivision of the edge $v_2w_1$ (The symmetric case using instead the edge $v_3w_1$ will be similar.) Since $a$ is 
near $v_3$, it must be adjacent to a vertex formed by subdivision of the edge $w_2v_3$ or $w_3v_3$ (the other two options will not allow $a$ to be near both $w_2$ and $w_3$). Without loss of generality it is $w_3v_3$. 
Moreover, this forces $b$ to share this
neighbor, as otherwise $v_3$ will have two degree $3$ neighbors in $G$. The final neighbor of $a$ makes it near $w_2$ but cannot lie on $v_1w_2$ or $v_3w_2$ 
lest we again have a vertex of degree $4$ with
two degree $3$ neighbors. So $a$ is adjacent to a vertex on the $w_2v_2$ edge. This is again $N_{11}$.

Finally, assume that neither $a$ nor $b$ is adjacent to $v_1$ in $G'$, $b$ is adjacent to $w_2$ and $w_3$, and $a$ is adjacent to $w_1$. The degree 3 vertices in $G$ are then $v_1$ and the one adjacent to $a$ formed by a subdivision of an edge in $G'$. Then the two subdivision vertices adjacent to $b$ must also be adjacent to 
$a$. Since $b$ is near $w_1$, assume first that $b$ is adjacent to a subdivision on the edge $v_1w_1$ in $G'$. Then the only way to make $b$ near both $v_2$ 
and $v_3$ is by making it adjacent to a vertex formed by subdividing that edge. As $a$ is also adjacent to that vertex, there is no way to make $a$ near both $w_2$ 
and $w_3$.
So without loss of generality $b$ (hence $a$) must be adjacent to a subdivision vertex on the edge $v_2w_1$ (as the symmetric case where $a$ and $b$ are adjacent to $v_3w_1$ is similar). Notice now that since $a$ is near both
$w_2$ and $w_3$ either $w_2$ or $w_3$ will share a degree $3$ neighbor with $a$. However, since they are both also neighbors of $v_1$, $G$ will have a degree $4$ vertex with two degree $3$ neighbors and cannot
be $2$--apex.
\end{proof}

\begin{prop} If $G$ is $(11,21)$ MMN2A, then $G$ is in the Heawood family.
\end{prop}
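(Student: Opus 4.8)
The plan is a degree-sequence analysis that routes every case into one of the five lemmas above. As always, minor minimality forces $\delta(G)\ge 3$, and an $(11,21)$ graph has degree sum $42$, so it cannot have all degrees $\le 3$; hence $\Delta(G)\ge 4$. For the upper bound I use the standard estimate: if $d(a)=k$ and $a$ has $j$ neighbors of degree $3$, then $\|(G-a)^s\|\le 21-k-j$, and when this is at most $14$ the graph $(G-a)^s$ is apex by Theorem~\ref{thmMMNA} (every MMNA graph has at least $15$ edges), whence $G$ is $2$--apex, a contradiction. Taking $j=0$, $k\ge 7$ gives $\Delta(G)\le 6$. Writing $a_i$ for the number of degree-$i$ vertices, we then have $a_3+a_4+a_5+a_6=11$ and $a_4+2a_5+3a_6=9$, a system with a short finite list of solutions.

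Next I would handle the large-degree vertices. A vertex of degree $6$ can have no neighbor of degree $3$ (otherwise $k+j\ge 7$), so its six neighbors all have degree $\ge 4$; this needs at least seven vertices of degree $\ge 4$ in total, and among the solutions with $a_6\ge 1$ only $(3^4,4^6,6)$ meets that requirement, so every other $a_6\ge 1$ sequence is impossible. The surviving sequence is exactly the hypothesis of the first lemma, giving $C_{11}$. Similarly a degree-$5$ vertex has at most one neighbor of degree $3$, hence at least four neighbors of degree $\ge 4$. The solutions with $a_6=0$, $a_5\ge 1$ are $(3^3,4^7,5)$, $(3^4,4^5,5^2)$, $(3^5,4^3,5^3)$, and $(3^6,4,5^4)$; the first three are settled by the lemmas for $N'_{11}$, $H_{11}$, and $E_{11}$. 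For the leftover $(3^6,4,5^4)$ the degree condition forces each degree-$5$ vertex to be adjacent to the unique degree-$4$ vertex and to the other three degree-$5$ vertices, so those five vertices span a $K_5$, the degree-$4$ vertex has no other neighbor, and each degree-$5$ vertex has exactly one further edge, to a degree-$3$ vertex. Deleting two of the degree-$5$ vertices, say $a$ and $b$, then collapses most of the $K_5$: the degree-$4$ vertex and the two remaining degree-$5$ vertices all become simplifiable, and $(\Gab)^s$ ends up supported on (a subset of) the six degree-$3$ vertices with far too few edges to contain $K_{3,3}$ or $K_5$, hence planar. So $G$ would be $2$--apex, a contradiction, and this sequence is excluded. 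Finally the only solution with $\Delta(G)=4$ is $(3^2,4^9)$, the hypothesis of the last lemma, giving $N_{11}$. Assembling the surviving cases, $G$ is one of $E_{11},C_{11},H_{11},N'_{11},N_{11}$, all Heawood family graphs.

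The substantive content is entirely in the five lemmas, which are already proved; the proposition itself is just this bookkeeping. The only places that need a little care are verifying that the supply of degree-$\ge 4$ vertices really is too small in each eliminated degree-$6$ sequence, and, in the $(3^6,4,5^4)$ case, tracking how the simplification after deleting two degree-$5$ vertices collapses the $K_5$ remnant onto the six-vertex subgraph on the degree-$3$ vertices, so that the resulting graph is certifiably planar. I do not expect any deeper obstacle.
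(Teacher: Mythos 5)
Your proposal is correct and follows essentially the same route as the paper: bound $\Delta(G)\le 6$, enumerate the degree sequences, discard the degree-$6$ sequences other than $(3^4,4^6,6)$ because a degree-$6$ vertex cannot have a degree-$3$ neighbor, dispatch five sequences to the five lemmas, and kill $(3^6,4,5^4)$ by observing that the high-degree vertices span a $K_5$ whose remnant collapses under simplification after deleting two degree-$5$ vertices, leaving at most eight edges and hence a planar graph. The only difference is cosmetic bookkeeping in how the eliminated degree-$6$ sequences are identified.
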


\begin{proof}
Assume that $G$ is an $(11,21)$ MMN2A graph. As we did in the previous cases, we may assume that the maximal vertex degree of $G$ is $6$ or less. Further, if $G$ has more than one vertex of degree $6$, then $G$ is not 
MMN2A, since it must be the case that one of the degree $6$ vertices has a degree $3$ neighbor and removing such a vertex leaves one with a graph that simplifies to a graph that has no more than $14$ edges, hence is not NA by Theorem~\ref{thmMMNA}.
This leaves us with the following degree sequences to consider:
$(3^7,5^3,6)$, $(3^6,4^2,5^2,6)$, $(3^5,4^4,5,6)$, $(3^4,4^6,6)$, $(3^6,4,5^4)$,$(3^5,4^3,5^3)$, $(3^4,4^5,5^2)$, $(3^3,4^7,5)$, and $(3^2,4^9)$.

We can throw out the first three sequences, since it is clear that the degree $6$ vertex must have a neighbor of degree $3$ and we find ourselves in the same situation as we were in at the beginning of this proof. 
Five of the remaining six sequences do in fact lead to an MMN2A graph and 
are treated in the five lemmas above. 

This leaves only the degree sequence $(3^6,4,5^4)$. Suppose $G$ is a MMN2A 
graph with this degree sequence. Each degree 5 vertex $v$ has
at most one degree 3 neighbor as otherwise $G-v$ simplifies to a graph 
of at most 14 edges and is not NA by Theorem~\ref{thmMMNA}.
This implies that the vertices of degree $4$ and $5$ when considered separately, induce a $K_5$ subgraph, with four of the vertices having other neighbors in $G$. Choose $a,b\in V(G)$ such that $\deg(a)=\deg(b)=5$,
and consider $G-a,b$. Observe that the induced $K_5$ subgraph becomes a $K_3$ subgraph when $a$ and $b$ are removed and only two of its three vertices have
neighbors in the rest of $G-a,b$. This means $(\Gab)^s$ has at most eight edges
and is planar, a contradiction. 
Therefore there is no $(11,21)$ MMN2A graph $G$ with
degree sequence $(3^6,4,5^4)$. Together with our five lemmas, this completes the proof.
\end{proof}

\section{10 vertex graphs}

In this section we prove that a $(10,21)$ MMN2A graph is in the Heawood family. This is a corollary of the following proposition, originally proved in~\cite{BM}.

\begin{prop}  \label{prop1}%
Let $G$ be a graph with either $|V(G)| \leq 8$ or else $|V(G)| \leq 10$ and $|E(G)| \leq 21$. If $G$ is  N2A and a $\YT$ move takes $G$ to $G'$, then $G'$ is also N2A.
\end{prop}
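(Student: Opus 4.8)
The plan is to establish the contrapositive: assuming $G'$ is $2$--apex, we produce a pair of vertices of $G$ whose deletion leaves a planar graph. Let $y$ be the degree three vertex destroyed by the $\YT$ move, with neighbours $a,b,c$, so that $G'$ is $G$ with $y$ deleted and the missing edges of the triangle $abc$ inserted. Say $G'-u,v$ is planar. If $\{u,v\}$ meets $\{a,b,c\}$, we finish immediately: deleting a vertex of $\{a,b,c\}$ from $G$ drops $y$ to degree at most two, and after deleting $y$ (when its degree is at most one) or suppressing it (when its degree is two), the graph $G-u,v$ is isomorphic to $G'-u,v$, hence planar, so $G$ is $2$--apex. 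So we may assume $a,b,c,u,v$ are five distinct vertices.

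Now put $P=G'-u,v$, a planar graph containing the triangle $abc$, and $Q=G-u,v$; then $P$ is obtained from $Q$ by the $\YT$ move on $y$. If $Q$ is planar we are done, so suppose it is not. The crucial observation is that a $\YT$ move can turn a non-planar graph into a planar one only in a restricted way: if a $\YT$ move carries a non-planar graph $F$ to a planar graph, then every $K_5$ or $K_{3,3}$ minor model of $F$ must use $y$, and $y$ cannot be an internal vertex of one of its paths, since contracting the model edge at $y$ along that path would realise the same minor after the move. Hence $y$ is a branch vertex of the model, and as $d(y)=3$ the model is a $K_{3,3}$ with $y$ one of its branch vertices. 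Applied to $F=Q$, this tells us that $P$ contains a subdivided $K_{2,3}$ whose size three side is joined by three disjoint paths to $a$, $b$ and $c$ respectively, in addition to the triangle $abc$; contracting those paths together with the triangle exhibits a $K_5-e$ minor of $P$ carried by branch sets containing $a$, $b$, $c$ and two further vertices $p$, $q$.

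This is where the hypotheses on the size of $G$ are used. Since $|V(P)| = |V(G)|-3$, in the case $|V(G)|\le 8$ we get $|V(P)|\le 5$; as $P$ already needs five vertices for the five branch sets, each is a single vertex, there are no subdivisions, and $\{a,b,c\}$ is the size three side, so $P$ contains $K_5-pq$, and being planar, $P=K_5-pq$. Then $Q$ is $K_{3,3}$ (or $K_{3,3}$ with some triangle edges retained, if $a,b,c$ were not pairwise non-adjacent in $G$), and for any $x\in\{a,b,c\}$ the graph $G'-x,u$ has only five vertices and lacks the edge $pq$, hence contains no $K_5$ and is planar; therefore $G-x,u$ is planar and $G$ is $2$--apex, as required. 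In the case $|V(G)|\le 10$ with $|E(G)|\le 21$ we have $|V(P)|\le 7$ and, $P$ being planar, $|E(P)|\le 15$, and the same analysis confines $P$ to a short explicit list of planar graphs carrying the $K_5-e$ configuration, the at most two extra vertices appearing only as subdivision points or as enlargements of the branch sets $p$, $q$. For each such $P$ one checks, just as in the five vertex case, that deleting a suitable pair of vertices of $G$ --- typically one of $a,b,c$ together with $u$ or $v$ --- leaves a planar graph.

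The reductions to the two easy cases and the structural lemma about $\YT$ and $K_{3,3}$ minors are routine. The main obstacle is the finite verification in the regime $|V(G)|\le 10$, $|E(G)|\le 21$: one must enumerate the planar graphs $P$ compatible with the $K_5-e$ configuration under the vertex and edge bounds and confirm, case by case, that $G$ has a planarizing pair. It is exactly this enumeration that fails once the size bounds are removed, which is why the hypotheses cannot simply be dropped.
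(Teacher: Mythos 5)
Your structural lemma is correct and nicely exploited in the small case: a $\YT$ move at $y$ can destroy non\-planarity only if every Kuratowski subgraph of $Q=G-u,v$ has $y$ as a branch vertex, forcing a $K_{3,3}$ subdivision with $y$ on it, and hence a $K_5-e$ minor in $P=G'-u,v$ spanned by branch sets through $a,b,c,p,q$. For $|V(G)|\leq 8$ this pins $P$ down to $K_5-pq$ on five vertices and your conclusion that $G-x,u$ is planar is right; this is a genuinely different and more self-contained route than the paper, which disposes of order $\leq 8$ by citing the classification of eight-vertex IK graphs. The initial reduction to $\{u,v\}\cap\{a,b,c\}=\emptyset$ is also fine.

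The gap is that for $|V(G)|=9,10$ --- which is where essentially all of the content of the proposition lives --- you have written ``one checks, case by case'' without performing or even correctly setting up the check. Three specific problems. First, your description of the candidate graphs $P$ is wrong: the one or two extra vertices of $P$ need not be ``subdivision points or enlargements of the branch sets $p$, $q$'' --- they need not participate in the $K_5-e$ model at all, and the branch sets through $a$, $b$, $c$ may also be enlarged by the paths from $a,b,c$ to the opposite side of the $K_{3,3}$; so the enumeration is larger and less structured than you suggest. Second, and more seriously, knowing $P$ is not enough to conclude that $G$ has a planarizing pair: that depends entirely on how $u$ and $v$ attach to $P$ and to each other, which your outline never touches. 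The paper's order-ten argument is almost entirely about exactly this --- it fixes a planar embedding of $G'-a,b$, uses the triangle to split the remaining four vertices into interior and exterior parts $H_1,H_2$, and reasons about which vertices are ``hidden'' from the deleted apexes; none of that information is visible in $P$ alone. Third, the hypothesis $\|G\|\leq 21$ never actually enters your argument: the family of planar graphs $P$ on at most seven vertices is finite with or without it, so as written your proof would ``work'' equally well without the edge bound, which you yourself note cannot be dropped. That is a reliable sign that the step carrying the real content --- the verification that each surviving configuration forces $G$ to be $2$--apex --- is missing rather than merely routine.
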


\begin{proof}
Since a graph of 20 or fewer edges is $2$--apex~\cite{Ma}, the only N2A graph with $|G| \leq 7$ is $K_7$, which has no 
degree three vertices. So, the proposition is vacuously true for graphs of order seven or less. 

Suppose $G$ is N2A with $|G| = 8$. As discussed in~\cite{Ma}, $G$ must be IK and we refer to the classification
of such graphs due independently to~\cite{CMOPRW} and \cite{BBFFHL}. There are 23 IK graphs on eight vertices, but only four have a vertex 
of degree three. In each case, a $\YT$ move on that vertex results in $K_7$, which is also N2A. 

Again, graphs of size 20 or smaller are $2$--apex. So, we can 
assume $\|G\| = 21$ and $|G| \geq 9$. If $G$ is of order nine and N2A, then, by \cite[Proposition~1.6]{Ma}, $G$ is a Heawood graph (possibly with the addition
of one or two isolated vertices). A $\YT$ move results in the Heawood graph $H_8$ or $K_7 \sqcup K_1$, both of which are N2A.

This leaves the case where $|G| = 10$. Assume $G$ is a $(10,21)$ N2A graph that admits a $\YT$ move to $G'$. For a contradiction, suppose $G'$ is $2$--apex with 
vertices $a$ and $b$ so that $G' - a,b$ is planar. Let $v_0$ be the degree three vertex in $G$ at the center of the $\YT$ move and $v_1,v_2,v_3$ the vertices of the resultant triangle in $G'$. Since $G$ is N2A, it must be
that $\{v_1, v_2, v_3 \}$ is disjoint from $\{a,b\}$. Fix a planar representation 
of $G'-a,b$. The triangle $v_1v_2v_3$ divides the plane into two regions. Let $H_1$ be the induced subgraph on the vertices interior to the triangle and $H_2$ that of the vertices exterior. Then $|H_1|+|H_2| = 4$. Since $G$ is N2A, there is an obstruction to converting the planar representation of $G'-a,b$ into 
a planar representation of $\Gab$. This means that both $H_1$ and $H_2$ contain vertices adjacent 
to each of the triangle vertices $\{v_1, v_2, v_3 \}$. In particular, $H_1$ and $H_2$ each have at least one vertex. 

Suppose $|H_1| = |H_2| = 2$. The graph $G-b,v_1$ is non-planar, but, its subgraph $G-a,b,v_1$ is essentially a subgraph of $G'-a,b$ (with the addition of a degree two vertex $v_0$ on the edge $v_2v_3$) and we will use the same planar representation for $G-a,b,v_1$ that we have for $G'-a,b$. 

Since $G-b,v_1$ is not
planar, there's an obstruction to placing $a$ in the same plane. If we imagine 
putting $a$ outside of a disk in the plane that covers
$G-a,b,v_1$, we see that their is some vertex $w$ in an $H_i$ that is {\bf hidden} from $a$. That is, although there's an edge $aw \in E(G)$, there is no $a$-$w$ path in the plane that avoids $G - b,v_1$. It follows that there's a cycle in $G - b,v_1$ with $w$ interior and $a$ exterior the cycle.

Without loss of generality, the hidden vertex $w$ is in $V(H_1) = \{c_1,d_1\}$, say $w = c_1$. This means we can assume  that $c_1 v_2 d_1 v_3 $ is a $4$--cycle in $G$,
which, in the planar embedding of $G'-a,b$, is arranged with $c_1$ interior to the cycle $v_2d_1v_3$. However, since $G'-a,b$ is planar, this means $c_1$ is also hidden from $v_1$ and $c_1v_1$ is not an edge of the graph. 

A similar argument using $G-b,v_2$ allows us to deduce a $4$--cycle $c_2 v_1d_2 v_3$ using the 
vertices $c_2$ and $d_2$ of $H_2$ while showing $c_2 v_2 \notin E(G)$. However,
it follows that $G - b,v_3$ is planar, a contradiction.

So, we can assume $|H_1| = 3$ while $H_2$ consists of the vertex $c_2$ with $\{v_1, v_2, v_3 \} \subset N(c_2)$. Suppose $H_1$ also has  a vertex, $c_1$, that is adjacent to all three triangle vertices. As $G-b,v_1$ is non-planar, there's a vertex of $H_1$, call it $d_1$, that is hidden from $a$ such that $c_1 v_2 d_1 v_3$ is 
a cycle in $G$ and $d_1v_1 \notin E(G)$. Similarly, $G-b,v_2$ shows that $c_1 v_1 e_1 v_3$ is in $G$ and $e_1v_2$ is not, $e_1$ being the third vertex of $H_1$. Now, $G-b,v_3$ will be planar unless $d_1e_1 \in E(G)$. However, in that case,
contracting $d_1e_1$ shows that $G'-a,b$ has a $K_{3,3}$ minor and is non-planar, a contradiction.

If $H_1$ has no vertex $c_1$ that, on its own, is adjacent to the three triangle vertices, then either $H_1$ is
connected, or else it is not but has an edge $c_1d_1$ such that $\{v_1, v_2, v_3 \} \subset N(c_1) \cup N(d_1)$. But, in this latter case, we can rearrange the planar representation of $G'-a,b$ such that the third vertex of $H_1$ is 
exterior to the triangle, returning to the earlier case where $|H_1| = |H_2|=2$. 
So we will assume $H_1$ is connected.

Suppose $H_1$ is not complete, having only two edges $c_1 d_1$ and $d_1 e_1$. Again $G-b,v_1$ shows
that at least two vertices of $H_1$ are in $N(v_2) \cap N(v_3)$ and there are two cases depending on
whether or not $\{c_1, e_1 \} \subset N(v_2) \cap N(v_3)$. If both $c_1$ and $e_1$ are in the intersection, then we can assume $c_1$ is hidden from $a$, meaning $ac_1 \in E(G)$, but $c_1 v_1 \notin E(G)$. Actually, since $c_1$ is interior to the
cycle $v_2 e_1 v_3$, it follows that $d_1$ is as well and $d_1v_1 \notin E(G)$ either. 
Then $e_1$ is the unique vertex of $H_1$ adjacent to $v_1$ and 
$G - b, v_2$ is planar, which is a contradiction.

If $c_1$ and $e_1$ are not both in $N(v_2) \cap N(v_3)$, we can assume that
$c_1$ and $d_1$ are the common vertices with at most one of those adjacent to $v_1$. If $c_1v_1 \notin E(G)$, then $G-b,v_2$ shows $d_1v_1 e_1 v_3$ 
is in $G$ and $e_1v_2$ is not. But then $G - b,v_3$ is planar, a contradiction.
So, we can assume it must be $d_1$ that's hidden, meaning 
$ad_1$ is an edge and $d_1v_1$ is not. In this case, $G- b,v_2$ must be planar, a contradiction.

Finally, if $H_1 = K_3$, then a similar sequence of arguments shows that, in $G'$, the 
induced subgraph on $V(H_1) \cup \{v_1,v_2,v_3\}$ is the octahedron graph and that
$a$ and $b$ are both adjacent to the three vertices of $H_1$.
By counting edges, we see that, in fact, $a$ and $b$ each have degree three and we have accounted for all edges in $G'$. Applying the $\TY$ move to recover $G$, we observe that $G$ is $2$--apex (for example, $G - c_1, d_1$ is planar for any pair of vertices $c_1,d_1 \in V(H_1)$), a contradiction.

We've shown that assuming $G'$ is $2$--apex leads to a contradiction. 
Thus, the proposition 
also holds in the case $|G| = 10$, which completes the proof.
\end{proof}

\begin{cor} If $G$ is a $(10,21)$ MMN2A graph, then $G$ is in the Heawood family.
\end{cor}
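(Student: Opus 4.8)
The plan is to push $G$ down one vertex by a single $\YT$ move, using Proposition~\ref{prop1} to carry the property N2A along, and then quote the order-nine classification. Since $G$ is MMN2A we may assume $\delta(G)\ge 3$; and since $\sum_{v}d(v)=42$, if $\delta(G)\ge 4$ then $G$ has degree sequence $(4^8,5^2)$ or $(4^9,6)$. I set these two sequences aside and first treat the case that $G$ has a vertex $v_0$ of degree three, say with neighbours $v_1,v_2,v_3$.

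Before applying the move I record that $v_0$ lies in no triangle: if, say, $v_1v_2\in E(G)$, then the $\YT$ move centred at $v_0$ would produce a graph $G'$ with at most $20$ edges, hence $2$-apex by \cite{Ma}; but Proposition~\ref{prop1} (applicable since $G$ is N2A with $|G|=10$ and $\|G\|=21$) says $G'$ is N2A, a contradiction. So the $\YT$ move centred at $v_0$ yields a simple graph $G'$ with $|G'|=9$, $\|G'\|=21$, and $\delta(G')\ge 3$ (each $v_i$ gains a degree and nothing drops below three), and $G'$ is N2A by Proposition~\ref{prop1}. By the classification of nine-vertex N2A graphs (\cite[Proposition~1.6]{Ma}: every N2A graph of order nine is a Heawood family graph up to isolated vertices, and here there are no isolated vertices), $G'$ is a Heawood family graph. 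Since $G$ is obtained back from $G'$ by the $\TY$ move on the triangle $v_1v_2v_3$ and the Heawood family is closed under $\TY$, $G$ lies in the Heawood family.

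It remains to rule out the two sequences with $\delta(G)\ge 4$. For $(4^9,6)$, let $b$ be the degree-six vertex; then $G-b$ is a non-planar $(9,15)$ graph with $\delta(G-b)\ge 3$. By Theorem~\ref{thmMMNA}, $G-b$ has an MMNA minor, which is therefore a Petersen family graph; as every Petersen family graph has exactly $15$ edges, no edge of $G-b$ can be deleted or contracted in passing to this minor, so $G-b$ is itself that graph, i.e.\ $G-b=P_9$. The degree sequence then forces $b$ to be adjacent to precisely the six degree-three vertices of $P_9$, determining $G$ up to isomorphism; and deleting two of the three pairwise-adjacent degree-four vertices of $P_9$ from this $G$ leaves a planar graph, so $G$ is $2$-apex, a contradiction. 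The sequence $(4^8,5^2)$ is handled the same way: deleting a degree-five vertex $a$ leaves a non-planar $(9,16)$ graph $G-a$ with $\delta(G-a)\ge 3$, and the same edge count forces $G-a$ to be $P_9$ with one added edge, or a single vertex split of $P_8$ or of $K_{4,4}-e$; reattaching $a$ and using the degree sequence leaves finitely many candidates, and in each one two vertices can be deleted to planarize $G$, contradicting MMN2A.

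The $\YT$ reduction is the clean half of the argument. The main obstacle is the last paragraph: with $\delta(G)\ge 4$ there is no degree-three vertex, so Proposition~\ref{prop1} gives no traction and the case must be closed by hand; the work lies in checking that the short list of graphs produced by removing a top-degree vertex are all $2$-apex.
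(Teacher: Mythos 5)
Your first half---reducing via a $\YT$ move at a degree-three vertex, invoking Proposition~\ref{prop1} to preserve N2A, and quoting the order-nine classification from \cite{Ma}---is exactly the paper's argument, and your observation that the degree-three vertex lies in no triangle (else $G'$ would have at most $20$ edges and be $2$--apex) is a correct way to see that $G'$ is a genuine $(9,21)$ graph. The problem is the final paragraph. The case $\delta(G)\ge 4$ does \emph{not} end in a contradiction: the Heawood family contains an order-ten member of minimum degree four, namely graph 20 of Figure~\ref{figHea} (the graph $N_{10}$, obtained from $E_9$ by a $\TY$ move), and it has degree sequence $(4^8,5^2)$. In the paper's treatment, after ruling out $\|G-a,b\|=11$ one is left with the two adjacent degree-five vertices $a,b$ and $G-a,b$ a non-planar cubic graph on eight vertices; when $a$ and $b$ attach to alternate vertices of the $8$--cycle the resulting graph is precisely this Heawood family graph, which is MMN2A and in particular not $2$--apex. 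So your assertion that ``in each [candidate] two vertices can be deleted to planarize $G$'' is false for at least one candidate, and as written your argument would ``prove'' that a Heawood family graph is $2$--apex. A correct proof must instead recognize that candidate and conclude $G$ is graph 20.

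A secondary issue: even where your $\delta(G)\ge4$ analysis points in the right direction, it is far too compressed to stand. For $(4^8,5^2)$ you assert that $G-a$ is forced to be ``$P_9$ with one added edge, or a single vertex split of $P_8$ or of $K_{4,4}-e$'' and that ``finitely many candidates'' remain; but enumerating the reattachments of $a$ and deciding planarity of the relevant two-vertex deletions is the bulk of the work (the paper spends roughly a page on it, organized around whether $a$ and $b$ have a common neighbour and which non-planar $(7,10)$, $(7,11)$, $(8,11)$, or cubic $(8,12)$ graph $G-a,b$ reduces to). The $(4^9,6)$ case you sketch is plausible---there $G-b$ is indeed forced to be $P_9$ and the configuration is $2$--apex---but the paper folds it into the $\|G-a,b\|=11$ analysis rather than treating it separately. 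As it stands, the proposal both omits the decisive case and leaves the remaining case analysis unverified.
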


\begin{proof} 
Suppose $G$ is $(10,21)$ MMN2A. Recall that $\delta(G) \geq 3$ as otherwise a vertex deletion or
edge contraction on a small degree vertex gives a proper minor that is also N2A. 

In \cite{Ma}, we showed that a graph of order nine is MMN2A if and only if 
it is in the Heawood family.
So, if $G$ has a degree three vertex, then apply a $\YT$ 
move at that vertex to get a graph $G'$.
Then, by Proposition~\ref{prop1} and the classification of MMN2A graphs of order nine, $G'$ is Heawood, whence $G$ is too.
So, we can assume $\delta(G) \geq 4$ which means the degree sequence of $G$ is  either $\{4^8,5^2\}$ or $\{4^9,6 \}$.

\begin{figure}[ht]
\begin{center}
\includegraphics[scale=.6]{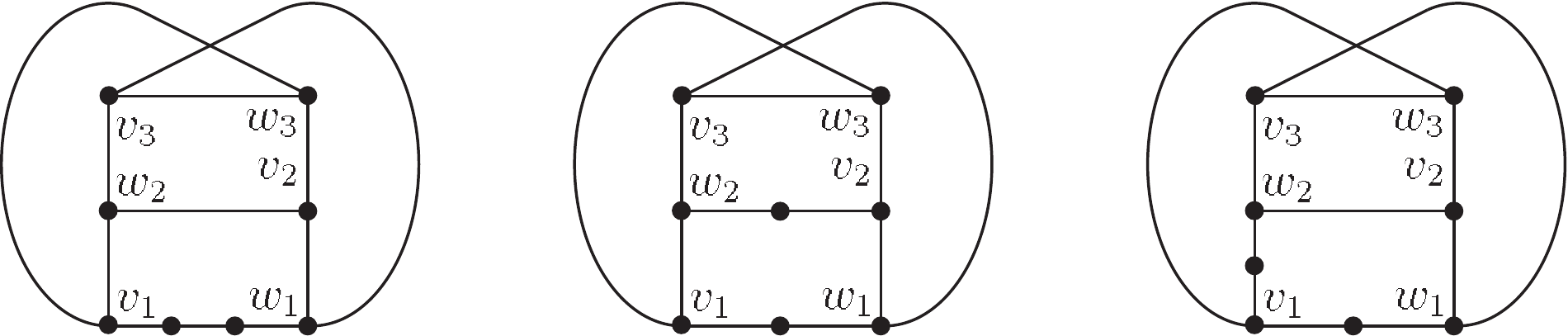}
\caption{The three non-planar (8,11) graphs of minimal degree at least two.}\label{figNP811}
\end{center}
\end{figure}

Suppose there are vertices $a$ and $b$ such that $\| \Gab \| =11$. 
Then at least one of $a$ and $b$ has degree five or six.
Since $\delta(G) = 4$, then $\delta(\Gab) \geq 2$ and $\Gab$ is one of 
one of the graphs of Figure~\ref{figNP811}.  In all three cases,
both $a$ and $b$ must be adjacent to both $v_3$ and $w_3$. For if, 
for example, $a$ and $v_3$ are not adjacent, then $G - b, w_3$ would be planar. 
But, if $a$ and $b$ are adjacent to both, then
$v_3$ and $w_3$ also have degree five in $G$, which contradicts the two given 
degree sequences for $G$. We conclude there is no choice
$a$ and $b$ such that $\| \Gab\| = 11$.

This means $G$ must have degree sequence $\{4^8, 5^2\}$ 
with the two vertices of degree five adjacent and
$\Gab$ a $(8,12)$ graph. There are two cases depending on whether or not $a$ and $b$ have a common neighbor
in $G$. Suppose first that $c$ is adjacent to both $a$ and $b$. In $\Gab$ vertex $c$ will have degree two and we can contract an edge on
$c$, to arrive either at a $(7,11)$ graph or else a multigraph with a doubled edge. Removing the extra edge if needed, 
let $H$ denote the resulting $(7,11)$ or $(7,10)$ graph.  

\begin{figure}[ht]
\begin{center}
\includegraphics[scale=.6]{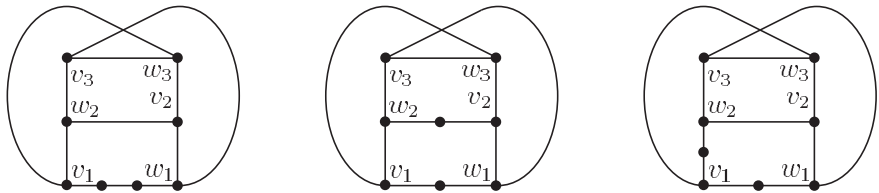}
\caption{The two non-planar (7,10) graphs of minimal degree at least one.}\label{figNP710}
\end{center}
\end{figure}

If $H$ is $(7,10)$, it is one of the two graphs of Figure~\ref{figNP710}. 
In the case of the graph on the left, the doubled edge must be that incident on the degree one vertex as $\delta(\Gab) \geq 2$.
But then the vertex labelled $v_1$ in the figure will have degree five in $\Gab$, contradicting our assumption 
that $a$ and $b$ were the only vertices of degree greater than four. So, we can assume $H$ is the graph to the right in the figure. 
Up to symmetry, the doubled edge of $H$ is either $uv_1$, $v_1w_2$, or $v_2w_2$. We'll examine the first case; the others are similar. 
Doubling $uv_1$ and adding back $c$ leaves $v_1$ of degree four in $\Gab$. Then $G - a,b,v_1$ simplifies to $K_{3,3} - v_1$. Since
$w_1$, $w_2$,  and $w_3$ all have degree three in $\Gab$, they each have exactly one of $a$ and $b$ as a neighbor in $G$. Suppose
$a$ is adjacent to $w_2$. Then $G - a, v_1$ is planar, contradicting $G$ being N2A. For the other two choices of edge doubling, once can again
delete a resulting degree four vertex along with $a$ or $b$ to achieve a planar graph. So $H$ being $(7,10)$ leads to a contradiction.

\begin{figure}[ht]
\begin{center}
\includegraphics[scale=.65]{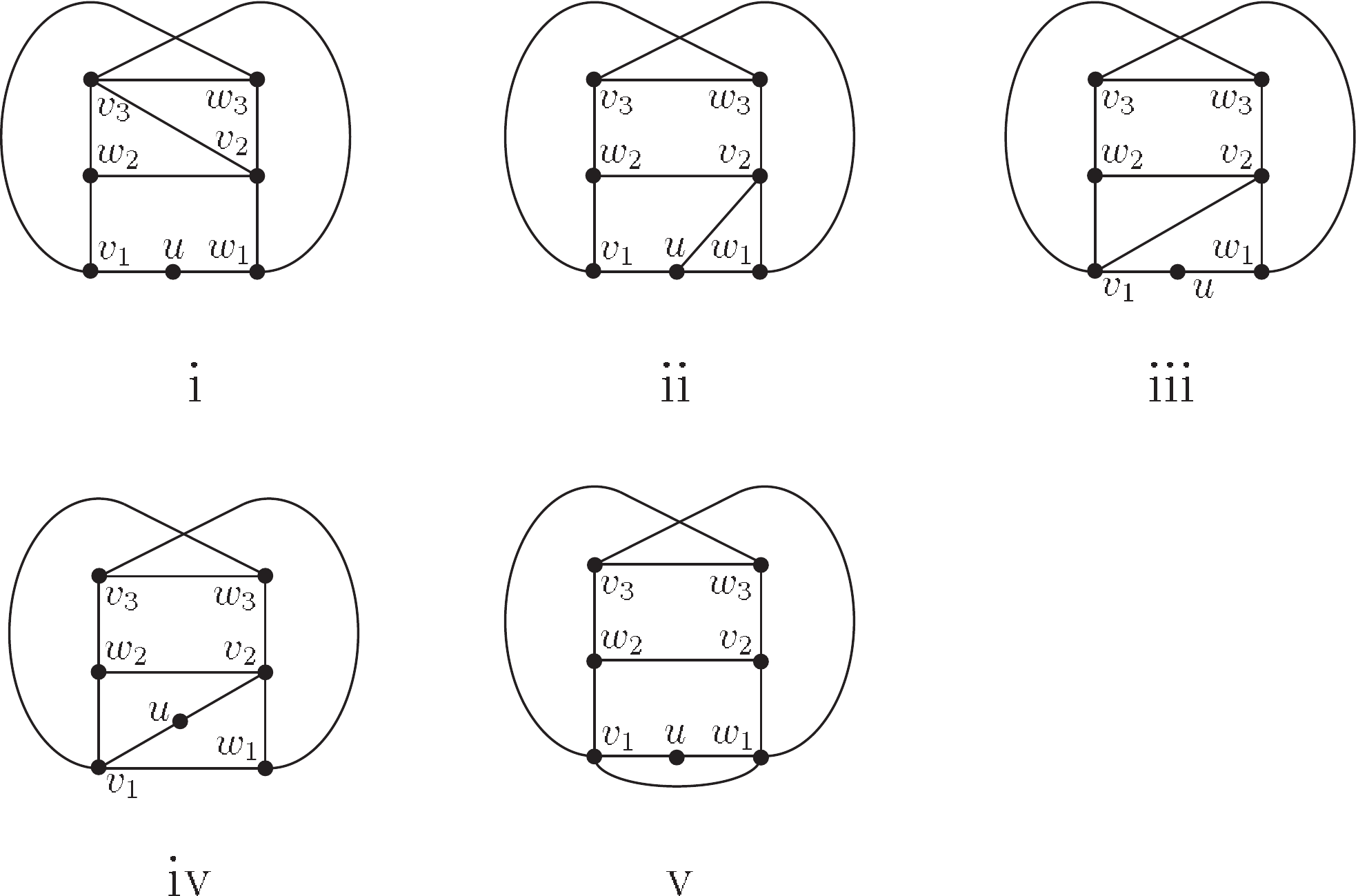}
\caption{The five non-planar (7,11) graphs of minimal degree at least two.}\label{figNP711}
\end{center}
\end{figure}

If $H$ is $(7,11)$, then $\delta(H) = \delta(\Gab) \geq 2$ and $H$ is one of the five graphs of Figure~\ref{figNP711}.
Here we use a similar approach. Deleting one of the degree four vertices of $H$, call it $x$, results in a graph 
$G - a,b,x$ that simplifies to $K_{3,3} - v_1$. Since each of the degree three vertices of $H$ is adjacent to exactly
one of $a$ and $b$, there will be an appropriate choice from those two, say $a$, such that $G - a,x$ is planar, which is a contradiction.
So, $H$ being $(7,11)$ is not possible and we conclude that there is no such vertex $c$ that is adjacent to both $a$ and $b$.

\begin{figure}[ht]
\begin{center}
\includegraphics[scale=.6]{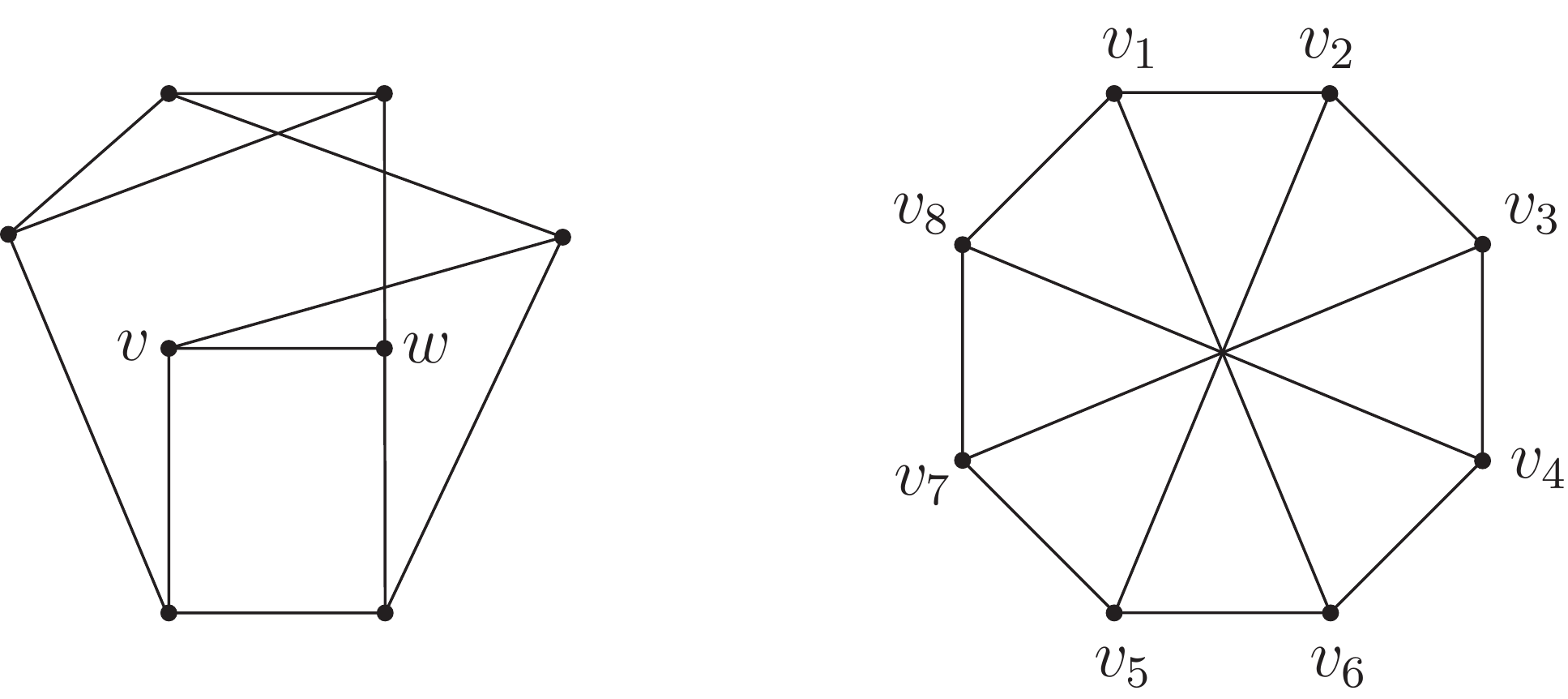}
\caption{The two non-planar cubic graphs of order eight}\label{fig8cubic}
\end{center}
\end{figure}

This means that $\Gab$ is a non-planar cubic graph (i.e., $3$-regular) on eight vertices. There are two such graphs, shown in Figure~\ref{fig8cubic}. If $\Gab$ is the graph to the left in Figure~\ref{fig8cubic}, note that the vertex labelled $v$ is adjacent to exactly one of $a$ and $b$, say $a$. Then $G - a,w$ is planar.

Finally, assume that $\Gab$ is the graph to the right in Figure~\ref{fig8cubic}.
Note that each vertex of $\Gab$ is adjacent to exactly one of $a$ and $b$ in $G$.
If $a$ and $b$ are adjacent to alternate vertices in the $8$--cycle (for example if $\{ v_1,v_3,v_5,v_7 \} \subset N(a)$ and $\{ v_2,v_4,v_6,v_8 \} \subset N(b)$), we obtain graph 20 of Figure~\ref{figHea}, a Heawood graph. If not, then we must have two consecutive vertices, say $v_1$ and $v_2$ that share the same neighbor in $\{ a,b \}$, say $a$. That is, we can assume $av_1, av_2 \in E(G)$.  Then $G - a, v_3$ is planar, contradicting $G$ being N2A.

In summary, if $G$ of order 10 is N2A with $\delta(G) > 3$, it must be graph 20 of the Heawood family. This completes the proof.
\end{proof}

\section*{Acknowledgments}

This research was supported in part by a Provost's Research and Creativity Award and a Faculty Development Award from CSU, Chico.

\end{document}